\newcommand{\bk}{\mathbbm{k}}
\newcommand{\m}{\to}
\newcommand{\cB}{\mathcal{B}}
\newcommand{\cS}{\mathcal{S}}\newcommand{\cT}{\mathcal{T}}
\newcommand{\cX}{\mathcal{X}}
\newcommand{\cY}{\mathcal{Y}}
\newcommand{\gA}{\bold{A}}
\newcommand{\gB}{\bold{B}}
\newcommand{\gC}{\bold{C}}
\newcommand{\gE}{\bold{E}}
\newcommand{\gR}{\bold{R}}
\newcommand{\gS}{\bold{S}}
\newcommand{\gT}{\bold{T}}
\newcommand{\bC}{\mathbb{C}}
\newcommand{\bF}{\mathbb{F}}
\newcommand{\bL}{\mathbb{L}}
\newcommand{\bN}{\mathbb{N}}
\newcommand{\bQ}{\mathbb{Q}}
\newcommand{\bZ}{\mathbb{Z}}
\newcommand{\Z}{\mathbb{Z}}
\newcommand{\F}{\mathbb{F}}
\newcommand{\Q}{\mathbb{Q}}
	\def\MR#1{}
\newcommand{\RB}{]]}
\newcommand{\LB}{\mathbb{[[}}
\newcommand{\C}{\mathbb{C}}
\newcommand{\R}{\mathbb{R}}
\numberwithin{thmcounter}{section}
\newaliascnt{thmauto}{thmcounter}
\newaliascnt{Defauto}{thmcounter}
\newaliascnt{exauto}{thmcounter}
\newaliascnt{lemauto}{thmcounter}
\newaliascnt{propauto}{thmcounter}
\newaliascnt{corauto}{thmcounter}
\newaliascnt{remauto}{thmcounter}
\newaliascnt{convauto}{thmcounter}
\newtheorem{atheorem}{Theorem}
\newtheorem*{ThmA'}{Theorem A'}
\newtheorem*{ThmB'}{Theorem B'}
\newtheorem*{ThmC'}{Theorem C'}
\newtheorem{theorem}[thmauto]{Theorem}
\newtheorem{lemma}[lemauto]{Lemma}
\newtheorem{proposition}[propauto]{Proposition}
\theoremstyle{definition}
\newtheorem{definition}[Defauto]{Definition}
\theoremstyle{remark}
\newtheorem{remark}[remauto]{Remark}
\newtheorem{example}[exauto]{Example}
\newcommand{\cat}[1]{\mathsf{#1}}
\newcommand{\mr}[1]{{\rm #1}}
\newcommand{\fS}{\mathfrak{S}}
\newcommand{\lra}{\longrightarrow}
\newcommand{\relo}{\,\mr{rel}^0\,}
\newcommand{\rk}{\mr{rk}}
\newcommand{\GL}{\mr{GL}}
\newcommand{\VIC}{\mr{VIC}}
\newcommand{\Tor}{\mr{Tor}}
\newcommand{\Hom}{\mr{Hom}}
\newcommand{\hAut}{\mr{hAut}}
\newcommand{\Mod}{\mr{Mod}}
\newcommand{\coker}{\mr{coker}}
\newcommand{\Std}{\mr{Std}}
\newcommand{\Specht}{\mr{Sp}}
\newcommand{\SL}{\mr{SL}}
\newcommand{\St}{\mr{St}}
\newcommand{\quot}[2]{{\raisebox{.2em}{$\scriptstyle #1\!\!$}\left/\raisebox{-.2em}{$\!\scriptstyle #2\!$}\right.}}
\title{Improved homological stability for certain general linear groups}
 \author{Alexander Kupers}
 \email{a.kupers@utoronto.ca}
 \address{University of Toronto\\
 Department of Mathematics\\
  1265 Military Trail\\
   Toronto, ON\\
    Canada, M1C 1A4}
\thanks{Alexander Kupers acknowledges the support of the Natural Sciences and Engineering Research Council of Canada (NSERC), as well as the Research Competitiveness Fund of the University of Toronto at Scarborough.}
 \author{Jeremy Miller}\thanks{Jeremy Miller was supported in part by NSF grant DMS-1709726 and a Simons Collaboration Grant.}
  \email{jeremykmiller@purdue.edu}
\address{Purdue University\\
Department of Mathematics \\
 	 150 North University \\
 	 West Lafayette IN, 47907 \\USA}
\author{Peter Patzt}\thanks{Peter Patzt was supported by the Danish National Research Foundation
	through the Copenhagen Centre for Geometry and Topology (DNRF151) and
	the European Research Council under the European Union’s Seventh
	Framework Programme ERC Grant agreement ERC StG 716424 - CASe, PI Karim
	Adiprasito.}
\email{ppatzt@ou.edu}
\address{University of Copenhagen, Centre for Geometry and Topology,
	Universitatsparken 5, 2100 Copenhagen, Denmark}
\address{University of Oklahoma, Department of Mathematics, 601 Elm
	Avenue, Norman, OK, 73019, USA}
\date{\today}
\begin{document}

\maketitle

\begin{abstract}We prove that the general linear groups of the integers, Gaussian integers, and Eisenstein integers satisfy homological stability of slope 1 when using $\bZ[\quot 12]$-coefficients and of slope $\quot{2}{3}$ when using $\Z$-coefficients.
 \end{abstract}

\section{Introduction} The (co)homology of arithmetic groups has many connections to number theory and topology. Here, we focus on general linear groups of a number ring $R$. Borel proved that these satisfy \emph{homological stability} with $\bQ$-coefficients \cite[Theorem 7.5]{Bor}: the \emph{stabilisation map}
\[H_d(\GL_{n-1}(R);\Q) \lra H_d(\GL_n(R);\Q),\]
induced by the inclusion $\GL_{n-1}(R) \to \GL_n(R)$, is an isomorphism for $n \gg d$. Moreover, he gave explicit ranges depending $R$; for example, for $R = \bZ$, he proved it is \emph{of slope} $\quot{1}{4}$, which means that the stabilisation map is an isomorphism when $d \leq \quot{n}{4}-c$ for some constant $c$. The slope was improved by Maazen and Van der Kallen to $\quot{1}{2}$ when using $\bZ$-coefficients \cite[Theorem 5.5]{Maazen} \cite[Theorem 4.11]{vdKallen}, and by Galatius, the first author, and Randal-Williams, to $\quot{2}{3}$ when using $\bZ[\quot{1}{N}]$-coefficients with $N$ depending on the number ring \cite[Section 18.2]{e2cellsI}. It is a conjecture that $\GL_n(\Z)$ satisfies homological stability of slope 1 when using $\bQ$-coefficients \cite[Page 1]{CP} \cite[Page 1]{CFPconj}. 

We prove the following generalisation of this for three well-behaved number rings: the integers, Gaussian integers, and Eisenstein integers.

\begin{atheorem}\label{thm:integers} Let $R$ be the integers, the Gaussian integers, or the Eisenstein integers. Then the stabilisation map
	\[H_d(\GL_{n-1}(R);\Z[\quot 12]) \lra H_d(\GL_n(R);\Z[\quot 12])\] is surjective for $d \leq n-1$ and an isomorphism for $d \leq n-2$.
\end{atheorem}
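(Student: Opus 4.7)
The plan is to work within the cellular $E_\infty$-algebra framework of Galatius--Kupers--Randal-Williams cited above, applied to the $E_\infty$-algebra $\mathbf{R}$ assembled from the groupoid $\coprod_n B\GL_n(R)$ under block-sum of matrices. In this framework, slope-$s$ homological stability with coefficients in a ring $\bk$ is equivalent to vanishing of the $E_\infty$-algebra indecomposables $H^{E_\infty}_{g,d}(\mathbf{R};\bk)$ in the range $d < sg$. The prior slope-$\quot 23$ result of \cite{e2cellsI} establishes this vanishing for $\bk = \bZ[\quot 1N]$ with a ring-dependent $N$; the present task is to push the slope up to $1$ while simultaneously reducing the required inversions to just $\quot 12$.

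The first step is to re-express these $E_\infty$-indecomposables in terms of the reduced $\bZ[\quot 12]$-homology of an explicit semi-simplicial set of split partial bases for $R^n$, which is essentially the split Tits building or the complex of partial frames used in \cite{e2cellsI}. A bar-type spectral sequence then converts high connectivity of this complex into the desired vanishing of $E_\infty$-cells: concretely, $(n-2)$-connectivity of the split-bases complex with $\bZ[\quot 12]$-coefficients translates to the slope-$1$ vanishing statement. The specific rings $\bZ$, $\bZ[i]$, $\bZ[\omega]$ are chosen because each is a Euclidean PID with a small finite unit group, so the Euclidean algorithm can be leveraged to build explicit null-homotopies for spheres in the split building, refining the inductive arguments of Maazen--Van der Kallen and Charney.

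The coefficients $\bZ[\quot 12]$ are the minimal inversion that allows the argument to go through. The sign representation of the symmetric group $\fS_p$ acting on ordered partial bases splits off over $\bZ[\quot 12]$, and its contribution to the bar spectral sequence is precisely the obstruction that would otherwise force the appearance of extra $E_\infty$-cells in the critical stability degrees. The larger unit groups of $\bZ[i]$ and $\bZ[\omega]$, of orders $4$ and $6$ respectively, act through finite cyclic groups whose odd-order contributions can be absorbed by transfer arguments and do not require further inversions; the only essential prime to invert is $2$.

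The crux of the argument, and the step I expect to be the main obstacle, is proving this sharp $(n-2)$-connectivity of the split-bases complex with $\bZ[\quot 12]$-coefficients, improving substantially on the integral $(\quot n2)$-connectivity of Maazen--Van der Kallen and the $(\quot{2n}3)$-connectivity underlying \cite{e2cellsI}. The expected proof route is a Quillen-type poset fibration argument together with a representation-theoretic analysis of links of simplices, carried out with coefficients in a local system that tracks the sign representation and exploits the Euclidean algorithm in $R$ to produce explicit fillings. The sharpness of the connectivity bound leaves essentially no slack, and each step of the inductive connectivity argument will need to be executed with care; once this is in hand, extracting the stated surjectivity for $d \leq n-1$ and isomorphism for $d \leq n-2$ from the $E_\infty$-cellular spectral sequence is a routine bookkeeping exercise.
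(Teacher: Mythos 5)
Your proposal correctly identifies the general framework (cellular $E_\infty$-algebras, the split-bases/splitting complex, vanishing of indecomposables) and correctly locates the role of $\bZ[\quot12]$ in a sign-of-permutation phenomenon. However, there is a central conceptual gap that renders the proposed strategy incorrect as stated.

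You assert that the crux is to prove "sharp $(n-2)$-connectivity of the split-bases complex with $\bZ[\quot12]$-coefficients, improving substantially on the integral $(\quot n2)$-connectivity of Maazen--Van der Kallen and the $(\quot{2n}3)$-connectivity underlying \cite{e2cellsI}." This is not the missing ingredient, and there is nothing to improve here: Charney already proved that the $E_1$-splitting complex $S^{E_1}(R^n)$ is $(n-2)$-spherical over $\bZ$ for any Dedekind domain, which is the maximal possible connectivity (it is $(n-2)$-dimensional). The complex already has this connectivity integrally; passing to $\bZ[\quot12]$ cannot improve it further, nor is any such improvement needed. Correspondingly, the assertion that slope-$s$ stability "is equivalent to vanishing of $H^{E_\infty}_{g,d}$ in the range $d<sg$" is off: the paper uses $E_1$-homology via the identification $H^{E_1}_{n,d}(\gR_\bk) \cong H_{d-n+1}(\GL_n(R);\mr{St}^{E_1}(R^n)\otimes\bk)$, and for $d<n-1$ this vanishes automatically from Charney's sphericality. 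The critical degree is $d=n-1$, where the $E_1$-homology is the group of \emph{coinvariants} $(\mr{St}^{E_1}(R^n)\otimes\bk)_{\GL_n(R)}$. The whole content of the slope-$1$ improvement is the vanishing of these coinvariants over $\bZ[\quot12]$; this is a statement about the $\GL_n(R)$-module structure of the top homology, not about connectivity.

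You also understate what distinguishes the integers, Gaussian integers, and Eisenstein integers. It is not merely that they are Euclidean with small unit groups: the actual input is the theorem of Church--Putman (extended by K.--M.--P.--Wilson to the two imaginary quadratic cases) that the complex $BA^m_n$ of \emph{augmented} partial frames, obtained from the partial-frames complex by adjoining additive simplices $v_0,\ldots,v_p$ with $\vec v_0 = \vec v_1 + \vec v_2$, is Cohen--Macaulay. This is a hard connectivity theorem in its own right, proved by careful Euclidean-algorithm inductions, and it has only been established for these three rings. The paper leverages this sphericality of $BA^m_n$ via the long exact sequence of the pair $(BA^m_n,B^m_n)$ to produce an explicit generating set of $\widetilde H_{n-1}(B^m_n)$ by joins of small additive spheres; one then exhibits group elements swapping two vertices of such a sphere so that each generator becomes $2$-torsion in the coinvariants, yielding vanishing after inverting $2$. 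This combinatorial generating-set argument, the additive complex $BA^m_n$, and the Lee--Szczarba-type induction on Steinberg coinvariants (routed through the relative Tits building and a poset/nerve filtration à la van der Kallen--Looijenga) are the essential ideas your proposal is missing; without them, your plan has no mechanism to produce the required slope-$1$ vanishing.
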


We prove a weaker range for a larger class of rings.

\begin{atheorem}\label{thm:Euclidean} Let $R$ be a Euclidean domain. Then the stabilisation map
	\[H_d(\GL_{n-1}(R);\Z[\quot 12]) \lra H_d(\GL_n(R);\Z[\quot 12])\]
	is surjective for $d\leq \quot23 (n-1)$ and an isomorphism for $d\leq \quot23(n-1)-1$.
\end{atheorem}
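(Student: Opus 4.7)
The plan is to apply the $E_\infty$-cells framework of Galatius, Kupers and Randal-Williams from \cite{e2cellsI}, with coefficient ring $\bk = \Z[\quot 12]$. I would form the $E_\infty$-algebra $\gR$ in $\bN$-graded simplicial $\bk$-modules whose level $n$ is $\bk\otimes C_*(B\GL_n(R))$ and whose multiplication is induced by the block-diagonal inclusions $\GL_m(R)\times\GL_n(R)\hookrightarrow\GL_{m+n}(R)$. Then $H_{n,d}(\gR) = H_d(\GL_n(R);\bk)$ and the stabilisation map is multiplication by a canonical class $\sigma\in H_{1,0}(\gR)$. The statement of Theorem B is thus equivalent to the vanishing of $H_{n,d}(\gR/\sigma)$ for $d\le\quot 23(n-1)$, together with a better-by-one vanishing for the isomorphism part.

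By the CW-type machinery of \cite{e2cellsI}, this vanishing reduces to showing that the derived $E_\infty$-indecomposables $H^{E_\infty}_{n,d}(\gR)$ vanish for $d<\quot 23(n-1)$. The bar-construction description identifies these indecomposables, up to a bidegree shift, with the reduced $\bk$-homology of a semisimplicial splitting complex whose $p$-simplices are ordered decompositions $R^n = V_0\oplus\cdots\oplus V_p$ into nonzero free summands. For a Euclidean ring this complex is $(\lfloor(n-2)/2\rfloor)$-connected by the theorem of Maazen (with Van der Kallen's extension to more general rings), which already gives slope $\quot 12$ stability with $\Z$-coefficients.

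To improve the slope from $\quot 12$ to $\quot 23$, I would follow the strategy of \cite[Section 18]{e2cellsI}: in the window $\quot{n-1}{2} \le d < \quot 23(n-1)$ the remaining $E_\infty$-indecomposables are generated by explicit cycles built from pairs of commuting elementary transvections, and the $E_2$-symmetrisation coming from the little $2$-discs operad shows that each such class is annihilated by $2$; since $2$ is invertible in $\bk$, these classes vanish. Feeding this extra vanishing into the skeletal spectral sequence computing $H_{*,*}(\gR/\sigma)$ then yields the claimed surjectivity and isomorphism ranges. The main obstacle will be the 2-torsion step: making the transvection-cycle identification and the annihilation-by-$2$ argument work uniformly over \emph{every} Euclidean domain rather than only for the three rings of Theorem A, and most of the technical work is likely to lie in finding generators and relations for $\GL_n(R)$ that enable this uniform treatment.
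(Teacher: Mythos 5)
Your high-level framework is the right one: the paper does set up the $E_\infty$-algebra $\gR_{\bk}$ with $\bk = \Z[\quot12]$, recognises stability as vanishing of $H_{n,d}(\overline{\gR}_\bk/\sigma)$, and reduces this to vanishing of $E_1$-homology, which is controlled by a splitting complex. But there are several genuine gaps between your sketch and an actual proof.

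First, your connectivity input is misattributed and numerically wrong. The semisimplicial splitting complex $S^{E_1}(R^n)$ (ordered direct-sum decompositions) is not the complex of partial bases of Maazen and van der Kallen, and it is not $\lfloor(n-2)/2\rfloor$-connected. The relevant result is Charney's theorem that $S^{E_1}(R^n)$ is $(n-2)$-\emph{spherical} (hence $(n-3)$-connected) for a Dedekind domain. Via the identification $H^{E_1}_{n,d}(\gR_\bk) \cong H_{d-n+1}(\GL_n(R);\St^{E_1}(R^n)\otimes\bk)$, Charney's sphericality alone already gives $H^{E_1}_{n,d}(\gR_\bk)=0$ for $d < n-1$. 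Maazen and van der Kallen's results concern the complex of partial bases/frames $B_n^m$, which the paper does use, but in a different role (to produce generators for relative Steinberg coinvariants), not as a direct substitute for Charney.

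Second, the mechanism by which inverting $2$ helps is not what you describe. You posit that in a window the $E_\infty$-indecomposables are generated by cycles built from commuting transvections, killed by $2$ via an ``$E_2$-symmetrisation'' coming from the little $2$-discs operad. Neither ingredient appears. The missing additional vanishing is at the edge $d = n-1$, and the concrete input (for Euclidean $R$) is the single group $H^{E_1}_{2,1}(\gR_{\Z[\quot12]}) \cong (\St^{E_1}(R^2)\otimes\Z[\quot12])_{\GL_2(R)}$. The paper shows this vanishes by exhibiting an explicit matrix $\begin{bsmallmatrix}1 & -r \\ 0 & -1\end{bsmallmatrix}$ that acts by $-1$ on the natural generators of $\widetilde H_0(B_1^1)$, so that each generator is $2$-torsion in the coinvariants; this is where inverting $2$ enters. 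Nothing about transvection cycles, nothing about an $E_2$-structure. In fact, the $E_k$-structure used in the abstract stability criterion is $k \geq 3$ (and $p \geq 3$): it is needed so that the lowest-slope Dyer--Lashof and Browder operations on $\sigma$ ($\beta Q^1(\sigma)$ in bidegree $(3,3)$, $[\sigma,\sigma]$ in bidegree $(2,2)$) still have slope $1$. Invoking only an $E_2$-structure would leave $[\sigma,\sigma]$ as a slope-$1$ obstruction at the level of $E_2$-algebras but would also lose the ability to absorb operations that genuinely use the higher structure.

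Third, your ``main obstacle'' analysis is aimed at the wrong target. For Theorem~B the algebraic input specific to general Euclidean domains is actually light: Lee--Szczarba for Steinberg coinvariants (long known for Euclidean domains), Charney's sphericality, and the one-line matrix computation above for $B_1^1$. Finding special generators and relations for $\GL_n(R)$ is not what is required here; that flavour of argument is what the paper deploys for the integral Theorem~D, where elementary-matrix generation of $\GL_2(R)$ is used to compare with an explicit small cellular $E_\infty$-algebra. Your proposal would have to be substantially recast to match the paper's route: replace the Maazen/van der Kallen connectivity claim by Charney's sphericality (through the $E_1$-splitting description of $E_1$-homology), replace the transvection-cycle/$E_2$ heuristic by the vanishing of $E_1$-Steinberg coinvariants after inverting $2$ (an explicit, essentially $2\times 2$ computation), and then feed both into the general slope-$\quot{m}{m+1}$ criterion (the paper's Proposition~\ref{E3}) at $m=2$.
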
 
 
We also prove a result with integral coefficients under a hypothesis on the units of $R$. 

\begin{atheorem}\label{thm:integersf2} Let $R$ be a Euclidean domain such that $R/2$ is generated as an abelian group by the image of the units of $R$. Then the stabilisation map
	\[H_d(\GL_{n-1}(R);\Z) \lra H_d(\GL_n(R);\Z)\]
is surjective for $d \leq \quot23 (n-1)$ and an isomorphism for $d\leq\quot23 (n-1)-1$. \end{atheorem}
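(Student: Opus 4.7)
The plan is to follow the proof of Theorem B closely, which establishes the same slope-$\tfrac{2}{3}$ stability range with $\Z[\tfrac{1}{2}]$-coefficients for all Euclidean domains, and to replace the use of $2$ being invertible at each step by the hypothesis that the image of $R^\times$ generates $R/2$ as an abelian group. The underlying framework is the $E_\infty$-cellular approach to homological stability of Galatius, Kupers, and Randal-Williams cited in the introduction, which recasts slope-$c$ stability for $\GL_n(R)$ as the vanishing of $E_\infty$-cells of an algebra built from $B\GL_n(R)$.

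Concretely, form the graded $E_\infty$-algebra $\gR$ with $\gR(n) = B\GL_n(R)$ under block sum, and reduce stability to vanishing of $H^{E_\infty}_{n,d}(\gR/\sigma; \Z)$ for $d \leq \tfrac{2}{3}(n-1) - 1$ (with surjectivity one degree higher), where $\sigma$ is the $E_\infty$-generator detecting the stabilisation map. Next, relate this $E_\infty$-homology to the homology of a splitting complex $\cW_\bullet(R^n)$ of proper direct-sum decompositions of $R^n$, exactly as in the proof of Theorem B; the $\Z$-connectivity of $\cW_\bullet(R^n)$ that we need is available for any Euclidean domain and is already used in Theorem B.

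The key place where the proof must diverge from Theorem B is in the spectral sequence converging from the splitting complex to the $E_\infty$-homology. There, the $\Z[\tfrac{1}{2}]$-coefficients in Theorem B are used to kill sign-type summands in representations of the wreath products $R^\times \wr \Sigma_p$ that index cells of $\gR$. Under the unit hypothesis I would instead show that the corresponding potential obstruction class is already a boundary at the integral chain level: rescaling basis vectors in a splitting by units of $R$ produces an averaging whose image covers all of $R/2$, and this equivariant averaging furnishes the required null-homotopy. Put differently, the sum-over-$R^\times$ transfer replaces the role of multiplication by $\tfrac{1}{2}$, which is precisely the content of the hypothesis that $R^\times$ generates $R/2$.

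The main obstacle I anticipate is Step 3: pinpointing where the $\Z[\tfrac{1}{2}]$-hypothesis actually intervenes in the proof of Theorem B, and checking that the unit hypothesis suffices to witness the needed cancellation at the chain level rather than merely modulo $2$-power denominators. A secondary concern is that the $\F_2$-calculation of relevant $H_*(R^\times \wr \Sigma_p)$ is potentially more delicate than its $\Z[\tfrac12]$-counterpart, so one has to verify that the $R^\times$-averaging genuinely annihilates the class in $\F_2$-coefficients and not only after a further reduction. Once this integral replacement is in place, the same stability spectral sequence as in Theorem B runs through and yields the claimed ranges.
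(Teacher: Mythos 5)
There is a genuine gap. You propose to replay the proof of Theorem~B and to replace ``invert $2$'' by an averaging/transfer over $R^\times$ wherever $\Z[\quot12]$ is used, but this is not where the difficulty lies, and the paper's argument for Theorem~C takes a genuinely different route. The obstruction to running Proposition~\ref{E3} at $p=2$ is structural: in the free $E_\infty$-algebra on $\sigma \in H_{1,0}$, the Dyer--Lashof class $Q^1(\sigma)$ lives in bidegree $(2,1)$, which has slope $\quot12<\quot23$, so the lower-bound-on-slope argument at the end of the proof of Proposition~\ref{E3} simply fails over $\F_2$. No $R^\times$-transfer or averaging changes the bidegree of $Q^1(\sigma)$; the class must be \emph{killed} rather than cancelled. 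Indeed, if one could make your proposed strategy work, the conclusion would hold for all Euclidean domains, but the paper records (in the remark after Lemma~\ref{E2cellVanish}) that this is false for $R=\F_2[x]$.

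What the paper actually does is build an explicit small cellular $E_\infty$-algebra $\gA_\bZ(R)$ freely generated by $\sigma$ and unit classes $u_r$, with an additional $3$-cell $\tau$ attached along $\sigma Q^1_\bZ(\sigma)-\sigma^2 u_{-1}$ precisely to kill the offending $(2,1)$-class in the quotient. Proposition~\ref{ARstab} then gives slope-$\quot23$ stability for $\gA_\bZ(R)$ by comparison with the algebra $\gB$ of \cite[Proposition 6.4]{e2cellsIII} via Lemma~\ref{AttachHigh}. The unit hypothesis enters at a single, very concrete place: Lemma~\ref{E2cellVanish}, which shows that the comparison map $f\colon \gA_\bZ(R)\to\gR_\bZ$ is surjective on $H_{2,1}$. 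That proof is not a chain-level null-homotopy or transfer; it is a direct computation in $H_1(\GL_2(R);\Z)$ showing that the class of an elementary matrix $E(r)$ lies in the image of $f_*$, by writing $r = 2a + u_1 + \cdots + u_k$ and using $[E(2a)]=0$, $[E(u_i)]\in\mr{im}(f_*)$. Your proposal neither isolates the $H_{2,1}$ surjectivity as the key point nor supplies the cell attachment that removes the slope-$\quot12$ generator; as written it would not compile into a proof.
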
 

There are many examples of Euclidean domains with $R/2$ generated by the units of $R$: examples include those in \autoref{thm:integers}, as well as $\bZ[\sqrt{2}]$ and $R=\F[x]$ with $\F$ a field of characteristic not $2$. See \autoref{optimal} for a discussion of the extent to which the above results are optimal.

\subsection*{Twisted coefficients} Homological stability for general linear groups with certain families of twisted coefficients was first established by Dwyer \cite[Proposition 1.1]{DwyerTwisted} (see also \cite[Theorem 5.6]{vdKallen}). More generally, homological stability holds for coefficients in a \emph{polynomial functor} as in Randal-Williams--Wahl \cite[Definition 4.10]{RWW} (see \autoref{DefPolyn}). In addition to the improved range for constant coefficients of \autoref{thm:integers}, we obtain one for polynomial coefficients:
 
 \begin{atheorem}\label{thm:twistedStab}
Let $R$ be the integers, the Gaussian integers, or the Eisenstein integers. Let $V$ be a polynomial functor over $\Z[\quot12]$ of degree $r$ in ranks $>m$. Then \[H_{d}(\GL_{n-1}(R);V_{n-1}) \lra H_d(\GL_n(R);V_{n})\] is surjective for $d\leq n-\max(r,m)-1$ and an isomorphism for $d\leq n-\max(r,m)-2$.

\end{atheorem}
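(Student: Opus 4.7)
The plan is to deduce \autoref{thm:twistedStab} from \autoref{thm:integers} via the Randal-Williams--Wahl framework for polynomial coefficients, inducting on the polynomial degree $r$ of $V$. For the base case $r=0$, a polynomial functor of degree $0$ in ranks $>m$ has value a single $\bZ[\quot12]$-module $A$ with identity stabilisation maps for $n>m$; since any $\bZ[\quot12]$-module is a filtered colimit of finitely generated ones and group homology commutes with such colimits and direct sums, \autoref{thm:integers} extends from $\bZ[\quot12]$- to $A$-coefficients in the same range $d\leq n-1$ (surjective), $d\leq n-2$ (isomorphism). This matches the desired range $d\leq n-m-1$ (resp. $n-m-2$) as soon as $n>m$, which is forced by the hypothesis.

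For the inductive step $r\geq 1$, I would use the standard short exact sequence of polynomial functors
\[ 0 \lra V \lra \Sigma V \lra \Delta V \lra 0, \]
where $\Sigma V_n := V_{n+1}$ regarded as a $\GL_n(R)$-module via the stabilisation, and $\Delta V$ has polynomial degree $r-1$ in ranks $>m-1$. The key identification is that $H_*(\GL_n(R); \Sigma V_n)$ agrees with $H_*(\GL_{n+1}(R); V_{n+1})$ in a range controlled by the connectivity of the augmented semi-simplicial set of split partial bases of $R^{n+1}$---precisely the splitting complex whose high connectivity underlies \autoref{thm:integers}. Feeding this into the long exact sequence in group homology and applying the inductive hypothesis to $\Delta V$ and \autoref{thm:integers} for the relevant error terms yields the desired range for $V$.

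The main obstacle is the bookkeeping to obtain the shift $\max(r,m)$ rather than $r+m$. A naïve induction replacing $(r,m)$ by $(r-1,m-1)$ at each step would accumulate shifts additively. The $\max$ is forced by the slope-$1$ slack in \autoref{thm:integers}: once $m$ has decremented to $0$, further decrements are absorbed by the range of the constant-coefficient stability rather than contributing to the shift, and symmetrically once $r$ reaches $0$ the induction terminates. In the $E_k$-cells formalism of Galatius--Kupers--Randal-Williams \cite{e2cellsI} cited in the introduction, this phenomenon is packaged uniformly: polynomial-coefficient stability is automatically of the same slope as constant-coefficient stability, with shift $\max(r,m)$. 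The novel content here is therefore entirely concentrated in \autoref{thm:integers}; the polynomial coefficient extension should be a formal consequence, whose main subtlety is verifying that the specific splitting complex used for \autoref{thm:integers} fits into the required framework with the claimed ranges.
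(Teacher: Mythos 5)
Your high-level strategy matches the paper's exactly: the paper proves \autoref{thm:twistedStab} by quoting a ready-made result (\cite[Theorem 4.8]{MPPpoly}) which upgrades slope-one constant-coefficient stability for $\GL_n(R)$ to slope-one stability with polynomial $\VIC(R)$-coefficients, with precisely the $\max(r,m)$ shift, so that \autoref{thm:integers} immediately implies \autoref{thm:twistedStab}. You correctly identify that all the novel content lies in \autoref{thm:integers} and that the twisted statement should be a formal consequence; where you go wrong is in attempting to reconstruct the black box, and your reconstruction has genuine gaps rather than mere imprecision.

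First, the short exact sequence $0 \to V \to \Sigma V \to \Delta V \to 0$ presupposes that $V \to \Sigma V$ is injective, i.e.\ $\ker V = 0$. In \autoref{DefPolyn} the definition of polynomial degree $r$ in ranks $>m$ only requires $\ker V$ to vanish in ranks $>m$, not everywhere, so this sequence does not exist in general; one has to work with the two complexes $\ker V$ and $\coker V$ separately (or restrict to the range $n>m$ and argue more carefully at the boundary). Second, and more seriously, your asserted identification of $H_*(\GL_n(R);\Sigma V_n)$ with $H_*(\GL_{n+1}(R);V_{n+1})$ in a range is not a standard fact and is not implied by high connectivity of the splitting complex in any direct way; the actual argument (in \cite{MPPpoly}, and in the related RWW-style treatments) proceeds via a relative spectral sequence coming from a highly connected semisimplicial set, and it is exactly in running this spectral sequence that the $\max(r,m)$ (rather than $r+m$) shift is extracted. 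You explicitly concede you have not resolved that accounting, which is the entire technical content of the step you are trying to reprove. So, while your conceptual reduction to \autoref{thm:integers} is right, the body of the argument is a gapped sketch of something you should simply cite.
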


\subsection*{Acknowledgments} The first author thanks S\o ren Galatius and Oscar Randal-Williams, who had a large influence on the ideas in this paper. Much of this project was completed as part of the American Institute of Mathematics SQuaRE \emph{``Secondary representation stability.''} We thank AIM for their support. We also thank Rohit Nagpal and Jennifer Wilson who participated in this SQuaRE but declined co-authorship. We thank Manuel Krannich for helpful conversations regarding twisted coefficients, and Calista Bernard for comments on earlier version.

\tableofcontents

\section{$E_\infty$-algebras and their homology} 
\label{sec:einfty}
We recall some of machinery for $E_k$-algebras, following \cite[Section 3]{e2cellsIII}. We shall forego the usual exposition of cellular $E_k$-algebras; an interested reader may consult \cite[Section 2]{e2cellsII}, \cite[Section 2]{e2cellsIII}, or the entirety of \cite{e2cellsI}. It is in the latter that the results discussed in this section are proven, unless stated otherwise.

\subsection{General linear groups as an $E_\infty$-algebra}\label{sec:gl-einfty} Fix a commutative ring $R$. Our starting point is the groupoid $\cat{V}_R$ with objects the non-negative integers, and morphisms given by
\[\cat{V}_R(n,m) = \begin{cases} \GL_n(R) & \text{if $n=m$,} \\
\varnothing & \text{otherwise}.\end{cases}\]
This has a symmetric monoidal structure given as follows: on objects, the monoidal product is addition and on morphisms, it is block sum of matrices as a homomorphism $\GL_n(R) \times \mr{GL}_m(R) \to \GL_{n+m}(R)$. The symmetry is given by the identity on objects, and on morphisms by conjugation with the $(n + m) \times (n +m)$-matrix
\[T_{n,m} \coloneqq \begin{bmatrix} 0 & \mr{id}_n \\
\mr{id}_m & 0 \end{bmatrix}.\]
Let $\cat{sSet}$ denote the category of simplicial sets and $\cat{sSet}^{\cat{V}_R}$ the category of functors $\cat{V}_R \to \cat{sSet}$. The latter is simplicially enriched and admits a symmetric monoidal structure using Day convolution. As a consequence, we can make sense of $E_k$-algebras in $\cat{sSet}^{\cat{V}_R}$, which in this paper means algebras over the \emph{non-unital} little $k$-cubes operad (i.e.~the space of $0$-ary operations is empty). The category $\cat{sSet}^{\cat{V}_R}$ admits a projective model structure, which makes it into a simplicially enriched symmetric monoidal model category. Right transfer of it yields a model structure on the category $\cat{Alg}_{E_k}(\cat{sSet}^{\cat{V}_R})$ of $E_k$-algebras in $\cat{sSet}^{\cat{V}_R}$.

The functor given by $\underline{\ast}_{>0}(n) = \varnothing$ if $n=0$ and $\ast$ otherwise, is uniquely a non-unital commutative monoid in $\smash{\cat{sSet}^{\cat{V}_R}}$, so in particular a non-unital $E_\infty$-algebra. Taking its cofibrant replacement in the category $\cat{Alg}_{E_\infty}(\cat{sSet}^{\cat{V}_R})$ we obtain a non-unital $E_\infty$-algebra $\gT$ with a weak equivalence $\gT \to \underline{\ast}_{>0}$.

Let $\bN$ denote the symmetric monoidal groupoid with objects given by non-negative integers, only identity morphisms, and monoidal product given by addition (the symmetry is then uniquely determined). The functor $\rk \colon \cat{V}_R \to \bN$ given by the identity on objects is symmetric monoidal, and the restriction functor $\rk^* \colon \cat{Alg}_{E_\infty}(\cat{sSet}^\bN) \to \cat{Alg}_{E_\infty}(\cat{sSet}^{\cat{V}_R})$ participates in a Quillen adjunction with left Quillen functor $\rk_*$. We shall define 
\[\gR \coloneqq \rk_*(\gT) \simeq \bL \rk_*(\underline{\ast}_{>0}) \in \cat{Alg}_{E_\infty}(\cat{sSet}^\bN),\]
whose underlying object in $\cat{sSet}^\bN$ is described by
\[\gR(n) \simeq \begin{cases} \varnothing & \text{if $n=0$,} \\
B\mr{GL}_n(R) & \text{if $n>0$.}\end{cases}\]
Since we are interested in the homology of the values of $\gR$ with coefficients in certain commutative rings $\bk$, we may as well replace $\cat{sSet}$ with the category $\cat{sMod}_\bk$ of simplicial $\bk$-modules and correspondingly $\cat{sSet}^\bN$ by the category $\cat{sMod}_\bk^\bN$ of functors $\bN \to \cat{sMod}_\bk$. That is, we may study instead $\gR_\bk \coloneqq \bk[\gR] \in \cat{Alg}_{E_\infty}(\cat{sMod}_\bk^\bN)$.

\subsection{$E_1$- and $E_\infty$-homology}There is a homology theory for $E_\infty$-algebras $\gS$ in $\cat{sMod}_\bk^\bN$, obtained by taking the homology of the derived $E_\infty$-indecomposables $Q^{E_\infty}_\bL(\gS) \in \cat{sMod}_\bk^\bN$:
\[H_{n,d}^{E_\infty}(\gS) \coloneqq H_d(Q^{E_\infty}_\bL(\gS)(n)).\]
Any $E_\infty$-algebra can be considered as an $E_1$-algebra and has analogous $E_1$-homology groups defined in terms of the derived $E_1$-indecomposables as
\[H_{n,d}^{E_1}(\gS) \coloneqq H_d(Q^{E_1}_\bL(\gS)(n)).\]
These control the number of $E_\infty$- or $E_1$-cells in a minimal CW-approximation of $\gS$.

Since $\gR_\bk$ is defined as a derived pushforward along $\rk \colon \cat{V}_R \to \bN$ and the symmetric monoidal category $\cat{V}_R$ satisfies the properties of \cite[Lemma 3.2]{e2cellsIII}, we may compute the $E_1$-homology in terms of the $E_1$-splitting complex. From now on, we assume that $R$ is a principal ideal domain (PID); equivalently, it is a Dedekind domain with trivial class group.

\begin{definition}The \emph{$E_1$-splitting complex}\label{E1splitDef} $S^{E_1}_\bullet(R^n)$ is the semisimplicial set with $p$-simplices given by the ordered collections $(V_0,\ldots,V_{p+1})$ of non-zero proper summands of $R^n$ such that the natural map $V_0 \oplus \cdots \oplus V_{p+1} \to R^n$ is an isomorphism. The face map $d_i$ takes the sum of the $i$th and $(i+1)$st terms.\end{definition}

These are the non-degenerate simplices in the nerve of the poset $\cS^{E_1}(R^n)$ with objects pairs $(V_0,V_1)$ of non-zero proper summands of $R^n$ such that the natural map $V_0 \oplus V_1 \to R^n$ is an isomorphism, and $(V_0,V_1) \leq (V'_0,V'_1)$ if $V_0$ is a summand of $V_0'$ and $V_1'$ is a summand of $V_1$.
		
Let $S^{E_1}(R^n)$ denote its geometric realisation, which has an action of $\GL_n(R)$. It was proven by Charney \cite[Theorem 1.1]{Charney} that this is $(n-2)$-spherical and thus only its $(n-2)$st reduced homology group can be non-zero. This is a $\bZ[\GL_n(R)]$-module, and being a top homology group it is necessarily free as an abelian group.

\begin{definition}The \emph{$E_1$-Steinberg module of $R^n$} is the $\bZ[\GL_n(R)]$-module 
	\[\mr{St}^{E_1}(R^n) \coloneqq \widetilde{H}_{n-2}(S^{E_1}(R^n)).\]
\end{definition}

The following is a consequence of \cite[Remark 17.2.10]{e2cellsI}:

\begin{proposition}\label{prop:e1-splitting}
For $R$ a PID and $\bk$ a commutative ring, there is an isomorphism
\[H^{E_1}_{n,d}(\gR_\bk) \cong H_{d-n+1}(\GL_n(R);\mr{St}^{E_1}(R^n) \otimes \bk).\]
\end{proposition}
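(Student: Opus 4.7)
The plan is to compute the $E_1$-homology of $\gR_\bk$ via the bar construction, identify this bar construction with a suitably shifted $\GL_n(R)$-equivariant reduced chain complex of the $E_1$-splitting complex, and then invoke Charney's sphericity theorem to reduce to the Steinberg module. This is essentially the content of \cite[Remark 17.2.10]{e2cellsI} applied to the specific case $\cG = \cat{V}_R$.

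First, I would use the standard fact that for any non-unital $E_1$-algebra $\gS$ in $\cat{sMod}_\bk^\bN$ there is a natural equivalence $\bar B(\gS) \simeq \Sigma Q^{E_1}_\bL(\gS)$, so that $H^{E_1}_{n,d}(\gS) \cong H_{d+1}(\bar B(\gS)(n))$, where $\bar B$ denotes the reduced bar construction. Unwinding the Day convolution, for $n > 0$ and $p \geq 1$ the bar construction of $\gR_\bk$ in simplicial degree $p$ reads
\[\bar B_p(\gR_\bk)(n) \simeq \bigoplus_{\substack{n_1+\cdots+n_p = n \\ n_i > 0}} \bk[B(\GL_{n_1}(R) \times \cdots \times \GL_{n_p}(R))].\]

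Next, since $R$ is a PID, $\GL_n(R)$ acts transitively on ordered direct sum decompositions of $R^n$ of any fixed type $(n_1,\ldots,n_p)$ with stabiliser $\prod_i \GL_{n_i}(R)$. Shapiro's lemma then identifies each summand above with the $\GL_n(R)$-homotopy orbits of $\bk$ on a single orbit of such decompositions. For $p \geq 2$ these orbits correspond precisely to $(p-2)$-simplices of $S^{E_1}_\bullet(R^n)$, while for $p = 1$ the sole decomposition is $R^n$ itself, matching the canonical $(-1)$-degree augmentation of the reduced chain complex. Assembling these identifications gives a $\GL_n(R)$-equivariant quasi-isomorphism
\[\bar B(\gR_\bk)(n) \simeq \bigl(\tilde C_\ast(S^{E_1}(R^n); \bk)\bigr)_{h\GL_n(R)}[2],\]
where $\tilde C_\ast$ denotes the augmented reduced chain complex and $[2]$ a homological shift by $+2$.

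Finally, Charney's theorem combined with the fact that $\mr{St}^{E_1}(R^n)$ is free as an abelian group implies that $\tilde C_\ast(S^{E_1}(R^n); \bk)$ is quasi-isomorphic as a complex of $\bZ[\GL_n(R)]$-modules to $\mr{St}^{E_1}(R^n) \otimes \bk$ placed in homological degree $n-2$. Substituting and combining degree shifts gives $H_D(\bar B(\gR_\bk)(n)) \cong H_{D-n}(\GL_n(R); \mr{St}^{E_1}(R^n) \otimes \bk)$, which together with the first step yields the claim. The main obstacle is the middle step: tracking the Day convolution formula for the bar construction, applying Shapiro's lemma to the stabilisers of decompositions, and matching the reduced augmentation with the $p=1$ term of the bar construction to obtain the $\GL_n(R)$-equivariant identification; once this is set up, Charney's theorem immediately reduces the computation to $\GL_n(R)$-homology of the Steinberg module.
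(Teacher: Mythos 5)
Your proof is correct and carries out in detail exactly the argument that the paper only gestures at by citing the corresponding remark in the $E_k$-cells series. The chain of reductions you use---identifying $\Sigma Q^{E_1}_\bL$ with the reduced bar construction, unwinding Day convolution into a sum over compositions, applying transitivity over a PID together with Shapiro's lemma, matching bar degree $p$ with $(p-2)$-simplices of $S^{E_1}_\bullet(R^n)$ (with $p=1$ giving the augmentation), and finally invoking Charney's sphericity theorem plus freeness of $\mr{St}^{E_1}(R^n)$---is precisely the content of that citation, and your degree bookkeeping lands on $H_{d-n+1}$ as required.
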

		
\section{The complex of partial frames and augmented partial frames} To eventually prove vanishing results for $E_1$-homology, we study the complexes of partial frames and of augmented partial frames in $R^n$. These complexes were introduced by Church--Putman \cite{CP} to study resolutions of Steinberg modules. The complex of partial frames is closely related to the complex of partial bases studied Maazen, van der Kallen, and others \cite{Maazen, vdKallen}.

\subsection{Definitions and connectivity results}

\subsubsection{Simplicial complexes} We presume the reader is familiar with simplicial complexes and their geometric realisation, as well as standard terminology such as simplices and links.

We say a simplicial complex is \emph{$d$-dimensional} or \emph{$d$-connected} if its geometric realisation is. It is \emph{$d$-spherical} if it is simultaneously $d$-dimensional and $(d-1)$-connected, in which case its geometric realisation is homotopy equivalent to a wedge of $d$-spheres. A simplicial complex is \emph{Cohen--Macauley} of dimension $d$ if it is $d$-spherical and the link of every $k$-simplex is $(d-k-1)$-spherical.

\subsubsection{Partial frames} \label{sec:partial-frames} Maazen proved his homological stability results by studying simplicial complexes of partial bases \cite{Maazen}, an approach which was extended by van der Kallen \cite{vdKallen}. We shall use a closely related simplicial complexes of partial frames, replacing basis vectors by the lines they span. From now on, we fix a PID $R$. 

\begin{definition}
	A vector $\vec v \in R^n$ is called \emph{primitive} if its span is a summand. In that case, we denote its span by $v$. Similarly if $v$ is a line, we let $\vec v$ denote a primitive vector which spans that line (which is well defined up to multiplication by a unit).\end{definition}

The term \emph{line} in this paper means a rank one free summand.

\begin{definition}A \emph{partial frame} is an unordered collection of lines $ v_0, \ldots  v_p$ such that there are lines $ v_{p+1},\ldots,  v_{n-1}$ such the map $v_0 \oplus \cdots \oplus  v_{n-1} \to R^n$ is an isomorphism. 
\end{definition}

\begin{definition} \label{complexesDef}
	The \emph{complex of partial frames} $B_n$ is the simplicial complex with $p$-simplices given by the set of partial frames of size $p+1$. A simplex $[w_0,\ldots,  w_q]$ is a face of $[v_0,\ldots,  v_p]$ if and only if $\{ w_0,\ldots,  w_q \} \subseteq \{ v_0,\ldots,  v_p \}$.
	\end{definition}

Viewing $R^m$ as the sub-module of $R^{m+n}$ spanned by the first $m$ standard basis vectors $\vec e_1,\ldots, \vec e_m$, we introduce the shorthand
\[B_n^m \coloneqq \mr{Link}_{B_{n+m}}(e_1,\ldots,  e_m).\]
Observe that $B_n^0 = B_n$. The following is \cite[Theorem 3.7]{KMPW}.

\begin{theorem} \label{thm:maazen}
	For $n \geq 1$ and $m \geq 0$ and $R$ is Euclidean, the simplicial complexes $B_n^m$ are Cohen--Macauley of dimension $(n-1)$.
\end{theorem}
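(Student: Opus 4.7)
The plan is to set up a double induction on $n$, reducing the Cohen--Macaulay property to the single statement that $B_n^m$ is $(n-2)$-connected for all $n \geq 1$ and $m \geq 0$. The dimension claim is immediate: a $p$-simplex $[v_0,\ldots,v_p]$ in $B_n^m$ together with $e_1,\ldots,e_m$ constitutes a partial frame of size $m+p+1$ in $R^{n+m}$, so $p \leq n-1$, with equality achieved by completing to the standard basis. For the link reduction, the link of a $k$-simplex $[v_0,\ldots,v_k]$ in $B_n^m$ equals $\Link_{B_{n+m}}(e_1,\ldots,e_m,v_0,\ldots,v_k)$. Since $R$ is a PID, every partial frame of size $m+k+1$ in $R^{n+m}$ extends to a full frame, and $\GL_{n+m}(R)$ acts transitively on full frames; hence the link is isomorphic to $\Link_{B_{n+m}}(e_1,\ldots,e_{m+k+1}) = B_{n-k-1}^{m+k+1}$. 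Granting the connectivity statement for smaller values of $n$, these links are $(n-k-2)$-connected, and since they are of dimension $n-k-2$, they are spherical of the required dimension.

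For the $(n-2)$-connectivity itself, I would induct on $n$. The base case $n=1$ is that $B_1^m$ is non-empty, witnessed by the vertex $[e_{m+1}]$. For the inductive step, given a simplicial map $f\colon \Sigma \to B_n^m$ from a triangulated $(n-2)$-sphere, the goal is to extend $f$ over $D^{n-1}$. I would run a bad-simplex argument in the spirit of Maazen and van der Kallen: declare a vertex $v$ of $\Sigma$ \emph{bad} if $f(v)$ does not lie in the star of $[e_{m+1}] \in B_n^m$, that is, if $\{e_1,\ldots,e_{m+1}\} \cup \{f(v)\}$ is not a partial frame. Using the Euclidean structure of $R$, a bad vertex with primitive representative $\vec v = (a_1,\ldots,a_{n+m})$ can be modified by replacing $\vec v$ with $\vec v - q \vec e_{m+1}$ for a suitable $q \in R$ chosen to reduce the Euclidean norm of $a_{m+1}$ (which exists because the projection of $\vec v$ to $\langle e_{m+2},\ldots,e_{n+m}\rangle$ fails to be primitive). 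The link of a bad simplex in $B_n^m$ is again of the form $B_{n'}^{m'}$ with $n' < n$, hence sufficiently connected by induction to carry out the required surgery. Iterating until no bad simplices remain, the entire image lies in the star of $[e_{m+1}]$, which is contractible, and so $f$ extends over $D^{n-1}$.

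The main obstacle is the careful setup of the badness criterion and an accompanying well-founded measure. The measure must be a lexicographic combination tracking both the number of bad simplices and the sum of Euclidean norms of the offending coordinates, so that each elementary replacement strictly decreases it; this is the only place where the Euclidean hypothesis (rather than merely the PID property) is used. One must also verify that the link of each bad simplex is connected enough to support the inductive extension, which is precisely where the Cohen--Macaulay property for smaller parameters feeds back into the argument. A secondary subtlety, relative to the ordered-basis setting of Maazen and van der Kallen, is that $B_n^m$ concerns unordered collections of lines, so representatives must be chosen consistently and all combinatorics adapted to be independent of the unit ambiguity in choosing $\vec v$ from $v$.
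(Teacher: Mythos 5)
The theorem in question is not proved in this paper: it is cited wholesale as \cite[Theorem 3.7]{KMPW}, so there is no internal argument to compare yours against. Your overall plan --- reduce Cohen--Macaulayness to $(n-2)$-connectivity via the link identification $\Link_{B_n^m}(\sigma) \cong B_{n-k-1}^{m+k+1}$, and establish connectivity via a Maazen--van~der~Kallen style bad-simplex induction using the Euclidean structure --- is the right family of ideas, and the dimension count and link reduction are correct.

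However, the Euclidean reduction step as you have written it cannot work. You declare $v$ bad when $\{e_1,\ldots,e_{m+1}\}\cup\{f(v)\}$ is not a partial frame, and then propose to cure badness by replacing $\vec v$ with $\vec v - q\vec e_{m+1}$. But $\mathrm{span}(e_1,\ldots,e_{m+1},\vec v) = \mathrm{span}(e_1,\ldots,e_{m+1},\vec v - q\vec e_{m+1})$ for any $q$, since $\vec e_{m+1}$ already lies in the span; hence this operation preserves whether the set is a partial frame and cannot change badness. Your parenthetical remark makes the confusion explicit: you correctly identify that $v$ is bad because the image of $\vec v$ in $R^{n+m}/R^{m+1}$ fails to be primitive, but then propose to alter the coordinate $a_{m+1}$, which has no effect on that image. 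The Euclidean norm you want to decrease and the obstruction to goodness live in disjoint coordinates, so the proposed measure does not decrease along your moves and the induction never terminates.

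To repair this one has to set up the argument the way Maazen and van~der~Kallen actually do. There the complexity of a vertex is measured by the Euclidean norm of its coordinate along a fixed \emph{ambient} direction (traditionally the last coordinate $a_{n+m}$), so that a vertex is good precisely when that coordinate vanishes, and the Euclidean division that reduces the norm is performed by subtracting multiples of \emph{other vertices of the same simplex} (which a priori have nonzero last coordinate), not by subtracting a fixed cone point. The Euclidean hypothesis then enters to guarantee the reduction strictly decreases a well-founded measure, and the induction on $n$ is used to supply the connectivity of the links of bad simplices needed to perform the surgery. As stated, your proposal names all the right ingredients but wires them together incorrectly, and the crucial decrease step fails.
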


\subsubsection{Augmented partial frames} Church--Putman \cite{CP} introduced a simplicial complex of augmented partial frames, by adding ``additive'' simplices to $B_n^m$.

\begin{definition}An \emph{augmented partial frame} is a collection of lines $ v_0,\ldots,  v_p$ such that, after reordering,  $ v_1, \ldots,  v_p$ is a partial frame and there are units $u_1,u_2$ with $\vec v_0=u_1 \vec v_1 + u_2 \vec v_2$ or in other words, there is a choice of representatives of lines such that $\vec v_0 = \vec v_1 + \vec v_2$.\end{definition}

\begin{definition}\label{def:augmented-frames} The \emph{complex of augmented partial frames} $BA_n$ is the simplicial complex with $p$-simplices given by the union of the set of partial frames of size $p+1$ and the set of augmented partial frames of size $p+1$. A simplex $[w_0,\ldots,  w_q]$ is a face of $[v_0,\ldots,  v_p]$ if and only if $\{ w_0,\ldots,  w_q \} \subseteq \{ v_0,\ldots,  v_p \}$.\end{definition}

We will need the following subcomplexes. 

\begin{definition} For $n>0$, let $BA_n^m$ denote the  subcomplex of $\mr{Link}_{BA_{n+m}}( e_1,\ldots,  e_m)$ of simplices $ v_0,\ldots,v_p$ such that $\vec v_i \notin R^m$ for all $0 \leq i \leq p$.\end{definition}
 
The following result is Church--Putman \cite[Theorems 4.2 and C']{CP} for $R$ the integers, and \cite[Theorem 3.16]{KMPW} for $R$ the Gaussian integers or the Eisenstein integers. 

\begin{theorem}[Church--Putman, K.--M.--P.--Wilson] \label{thm:cp}
	For $R$ the integers, Gaussian integers, Eisenstein integers, the simplicial complexes $BA_n$ and $BA_n^m$ are all Cohen--Macaulay. 
\end{theorem}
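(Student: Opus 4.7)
The plan is to prove the statement by strong induction on $n$ (keeping $m$ free), showing that $BA_n$ and $BA_n^m$ are Cohen--Macaulay of dimension $n$. \autoref{thm:maazen} supplies the corresponding but weaker-dimensional fact for the subcomplexes $B_n \subseteq BA_n$ and $B_n^m \subseteq BA_n^m$ of non-augmented partial frames, namely that they are Cohen--Macaulay of dimension $n-1$. A dimension count confirms the target dimension: a partial frame in $R^n$ has at most $n$ lines, hence contributes simplices of dimension $\le n-1$, while an augmented partial frame $v_0, v_1, \ldots, v_p$ has $v_1, \ldots, v_p$ a partial frame, so $p \le n$, with equality achievable. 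The extra augmented line thus pushes the top dimension up by one.

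To reduce the Cohen--Macaulay link condition to a pure connectivity statement, I would analyze the link of a $k$-simplex $\sigma$ in $BA_n^m$ via the summand $V \subseteq R^{n+m}$ spanned by lifts of its vertices. If $\sigma$ contains no augmented vertex then $\rk V = k+1$; if it contains one then $\rk V = k$, since the augmented vertex lies in the span of two others. Choosing a basis of a complement of $V + R^m$ identifies $\mr{Link}_{BA_n^m}(\sigma)$ with a complex of the form $BA_{n'}^{m'}$ for suitable $n' \le n - k + 1$, to which the inductive hypothesis applies. Some bookkeeping is needed when the link contains augmented simplices involving vertices of $\sigma$, but the identification of summands is insensitive to this.

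The main obstacle is thus the top-level $(n-1)$-connectivity of $BA_n^m$ itself. My plan is a bad-simplex surgery argument in the spirit of Hatcher--Vogtmann: starting from a cycle and filtering its simplices by the number of augmented vertices, push the cycle into $B_n^m$ (which by \autoref{thm:maazen} is only $(n-2)$-connected, one short of what is needed) by replacing augmented vertices with non-augmented ones and using the extra dimension provided by the augmented simplices. The key lemma needed is that for each augmented vertex $v_0$ with $\vec v_0 = u_1 \vec v_1 + u_2 \vec v_2$, its link in the relevant subcomplex is sufficiently highly connected to support the surgery. This is exactly where the arithmetic of $R$ enters decisively: for $R$ the integers, Gaussian integers, or Eisenstein integers, the quotient $R/2$ is a field with $2$ or $4$ elements, and this restriction yields just enough augmented relations between lines to make the surgery go through, as carried out by Church--Putman for $R = \bZ$ and extended to the remaining two cases in K.--M.--P.--Wilson. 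Precisely this step is what fails for general Euclidean domains, accounting for the restriction to these three rings in \autoref{thm:cp}.
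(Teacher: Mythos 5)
The paper does not prove this theorem: \autoref{thm:cp} is a citation, quoting \cite[Theorems 4.2 and C']{CP} for $R = \bZ$ and \cite[Theorem 3.16]{KMPW} for the Gaussian and Eisenstein integers. There is therefore no in-paper proof against which to compare your attempt; the honest answer is that this is a deep external input, not something established here.

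As a self-contained sketch your proposal has roughly the right shape (induction, identification of links with smaller complexes, a connectivity argument that needs arithmetic input), but two details are off. First, $BA_n$ and $BA_n^m$ have dimension $n-1$, not $n$, when $n+m \leq 1$, as the paper states immediately after the theorem; your ``equality achievable'' claim fails there because no augmented simplex exists in $R^1$. Second, and more substantively, your explanation of where the arithmetic enters is wrong: $\bZ[i]/(2)$ is \emph{not} a field, since $2 = -i(1+i)^2$ ramifies and $\bZ[i]/(2) \cong \F_2[\epsilon]/(\epsilon^2)$ has nilpotents. The arithmetic condition that actually recurs in this paper (in \autoref{thm:integersf2}) is that the image of $R^\times$ generates $R/2$ as an abelian group, which holds for all three rings but is weaker than $R/2$ being a field. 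Moreover, the actual connectivity arguments in Church--Putman and K.--M.--P.--Wilson are considerably more delicate than a single Hatcher--Vogtmann-style bad-simplex surgery; they rely on the specific Euclidean geometry of these imaginary quadratic rings, and ``some bookkeeping'' undersells where the real work lies.
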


The dimension of $BA_n$ and $BA_n^m$ depends on $n+m$. If $n+m \leq 1$, then they are $(n-1)$-dimensional, otherwise they are $n$-dimensional. Thus the previous theorem says that $BA_n$ and $BA_n^m$ are $n$-spherical for $n+m \geq 2$ and hence $(n-1)$-connected. Following \cite[Definition 4.9]{CP}, we make the following definition.

\begin{definition}
	Let $\sigma$ be a simplex of $BA_n^m$ spanned by vertices $v_0,\ldots,v_p$.
	\begin{enumerate}
		\item We say $\sigma$ is an \emph{internally additive} simplex if $\vec v_i = \vec v_k + \vec v_j$ for some $i,j,k$ and choice of representatives of lines. 
		\item We say $\sigma$ is an \emph{externally additive} simplex if $\vec v_i =  \vec e_k + \vec v_j$ for some $i,j,k$ and choice of representatives of lines.  
	\end{enumerate} We say a simplex is \emph{additive} if it is internally additive or externally additive.
\end{definition}

\subsection{Generating set for the top homology} Since $B_n^m$ is $(n-1)$-spherical, the only possible non-zero reduced homology group is the $(n-1)$st. In this subsection, we will construct a generating set for this homology group. We follow the strategy used in \cite[Lemma 2.58]{MPP} in the case that $R$ is a field. We first introduce notation for certain spheres in the complex of partial frames.

\begin{definition} \label{defJoin}\
	\begin{enumerate}[(i)]
		\item Let $\{v_0,\ldots,v_p\}$ be a collection of vertices of $B^m_n$ such that any subset of size $p$ forms a $(p-1)$-simplex (but the entire set need not form a simplex). Then we denote by $\langle v_0,\ldots, v_p \rangle$ the full subcomplex of $B_n$ consisting of the vertices $v_0,\ldots ,v_p$ excluding $[v_0,\ldots,v_p]$ in the event that it forms a simplex. 
		\item Let $\{v_{j,0},\ldots, v_{j,p_j} \}$ with $1 \leq j \leq q$ be collections of vertices of $B^m_n$ such that a subset of \[ v_{1,0},\ldots, v_{1,p_1}, \ldots , v_{q,0},\ldots, v_{q,p_q} \] forms a simplex if it is a subset of a set of the form \[v_{1,0},\ldots, \widehat{ v_{1,i_1}},\ldots v_{1,p_1}, \ldots , v_{q,0},\ldots, \widehat{ v_{q,i_q}},\ldots v_{q,p_q}. \] 
		Then we denote by $\langle v_{1,0},\ldots, v_{1,p_1} \rangle * \cdots *\langle v_{q,0},\ldots, v_{q,p_q} \rangle$ the full subcomplex of $B_n$ consisting of the vertices $v_{1,0},\ldots, v_{1,p_1}, \ldots, v_{q,0},\ldots, v_{q,p_q}$, excluding simplices containing simplices of the form $[v_{j,0},\ldots, v_{j,p_j} ]$ in the event that it forms a simplex.
	\end{enumerate}
\end{definition}

Note that $\langle v_0,\ldots, v_p \rangle \cong S^{p-1}$ and $\langle v_{1,0},\ldots v_{1,p_1}\rangle* \cdots *\langle v_{q,0},\ldots, v_{q,p_q} \rangle \cong S^{-1+{\scriptstyle \sum}_j p_j}$. 

\begin{definition}
	Let $\{v_{j,0},\ldots, v_{j,p_j} \}$ for $1 \leq j \leq q$ be collections of vertices of $B_n^m$ satisfying the condition of \autoref{defJoin} (ii). Let 
	\[ \LB v_{1,0},\ldots v_{1,p_1}\RB * \cdots *\LB v_{q,0},\ldots, v_{q,p_q}\RB \in \widetilde H_{-1+{\scriptstyle \sum}_j p_j}(B_n^m)\] denote the image of the fundamental class of $\langle v_{1,0},\ldots, v_{1,p_1} \rangle * \cdots * \langle v_{q,0},\ldots, v_{q,p_q} \rangle$, where the sphere is oriented by the order of the lines.
\end{definition}

When we write $\LB v_{1,0},\ldots, v_{1,p_1} \RB * \cdots *\LB v_{q,0},\ldots, v_{q,p_q} \RB$,
we implicitly assume that the conditions of \autoref{defJoin} (ii) are satisfied. These satisfy the following relation: permuting the vertices within a list $v_{j,0},\cdots,v_{j,p_j}$ yields the same class up to the sign of the permutation. For example,  \begin{align*}& \LB v_{1,0},v_{1,1},v_{1,2} \ldots v_{1,p_1} \RB * \cdots *\LB v_{q,0},\ldots, v_{q,p_q}\RB \\
&\qquad =-\LB v_{1,1}v_{1,0},v_{1,2},\ldots v_{1,p_1}\RB * \cdots *\LB v_{q,0},\ldots, v_{q,p_q}\RB.\end{align*}

The goal of this subsection is to prove the following, which generalises \cite[Lemma 2.58]{MPP} in the case of fields.

\begin{theorem} \label{thmGen}
Let $R$ be a Euclidean domain such that $BA_n^m$ is spherical for all $n$ and $m$. Then for all $n \geq 1$ the group $\widetilde H_{n-1}(B_n^m)$ is generated by elements of the following form:
	\[ \LB v_1,v_2, w_2 \RB*\LB v_3,v_4, w_4 \RB* \cdots * \LB v_{2d-1},v_{2d},w_{2d}  \RB * \LB v_{2d+1},u_{2d+1}  \RB * \cdots * \LB v_n, u_n \RB   \] with \begin{enumerate}
		\item $e_1,\ldots,e_m,v_1,\ldots,v_n$ a frame,
		\item for all $i$, $\vec w_{2i}= \vec v_{2i-1} + \vec v_{2i} $ for some choice of representatives of lines, 
		\item for all $j$, $\vec u_j=\vec v_j + \vec v_i$ for some $i<j$ or $\vec u_j=\vec v_j + \vec e_i$ for some $i \leq m$ and for some choice of representatives of lines.
	\end{enumerate}
\end{theorem}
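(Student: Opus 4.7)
I would proceed by induction on $n$, using the pair $(BA_n^m, B_n^m)$ as the main organizing tool. By the sphericality assumption, $BA_n^m$ is $(n-1)$-connected, while $B_n^m$ has dimension $n-1$ (so has no $n$-simplices). The long exact sequence of the pair therefore yields a surjection
\[\partial \colon \widetilde H_n(BA_n^m, B_n^m) \twoheadrightarrow \widetilde H_{n-1}(B_n^m),\]
so every class is represented by a relative $n$-cycle. A relative $n$-chain is a linear combination of $n$-simplices of $BA_n^m$, each of which is an augmented partial frame containing exactly one additive vertex (internally additive with $\vec w = \vec v_i + \vec v_j$, or externally additive with $\vec u = \vec v_i + \vec e_k$). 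These additive simplices are exactly what contributes the ``elementary'' factors $\LB v_i, v_j, w\RB$ and $\LB v_j, u_j\RB$ in the stated generators.

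The base case $n = 1$ would follow from path-connectedness of $BA_1^m$: any class $[v] - [u] \in \widetilde H_0(B_1^m)$ is a telescoping sum of pairs connected by externally additive edges, each precisely of the form $\LB v, u\RB$. For the inductive step, given $\sigma \in \widetilde H_{n-1}(B_n^m)$ represented by a relative $n$-cycle $\tau$, I would peel off join factors iteratively. To extract a triangle factor, choose an internally additive vertex $w$ with $\vec w = \vec v_1 + \vec v_2$ appearing in $\tau$, and consider the link of $\{v_1, v_2, w\}$ in $BA_n^m$. By the Cohen--Macaulay property in \autoref{thm:cp}, this link is spherical of dimension $n - 3$, and after the change of basis sending $v_1, v_2$ to two new $e$'s it contains $BA_{n-2}^{m+2}$; the inductive hypothesis applied there expresses the ``link contribution'' in the desired form, and joining with $\LB v_1, v_2, w\RB$ produces a generator of the claimed shape. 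An externally additive edge $\LB v_j, u_j\RB$ is peeled off analogously, reducing $(n, m)$ to $(n - 1, m + 1)$. Iterating this process yields the full join decomposition with $d$ triangle factors and $n - 2d$ edge factors.

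The hard part will be controlling the peeling so that the resulting generators have precisely the stated form: the $v_1, \ldots, v_n$ must together with $e_1, \ldots, e_m$ form a frame of $R^{n+m}$; the triangle triples $(v_{2i-1}, v_{2i}, w_{2i})$ must be disjoint and consistently ordered; and each $u_j$ for $j > 2d$ must express additivity using a strictly earlier $v_i$ or some $e_k$. Doing this honestly requires careful bookkeeping of vertex orderings during the peeling, verification that the join-condition of \autoref{defJoin}(ii) is preserved at each stage, and tracking the signs that arise from reordering join factors. I expect that a slightly strengthened inductive statement --- allowing arbitrary extensions of the partial frame $e_1,\dots,e_m$ rather than a fixed one --- will be needed to make the induction go through cleanly.
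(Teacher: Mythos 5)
Your setup is the same as the paper's: the pair $(BA_n^m, B_n^m)$, the surjection from $H_n(BA_n^m,B_n^m)$ onto $\widetilde{H}_{n-1}(B_n^m)$, and the observation that relative simplices have exactly one additive part. But your execution diverges at exactly the point you flag as "the hard part," and that divergence is what makes the bookkeeping seem unavoidable. The paper's insight is that you do not peel one factor at a time from a cycle; instead the \emph{entire relative chain complex} decomposes canonically as a direct sum indexed by the additive triangle or external edge:
\[C_*(BA_n^m,B_n^m) \cong \bigoplus_{\{v_0,v_1,v_2\}\in\mathcal{I}_2} C_{*-3}(\mathrm{Link}_{B_n^m}(v_0,v_1)) \oplus \bigoplus_{\{v_0,v_1\}\in\mathcal{E}_1} C_{*-2}(\mathrm{Link}_{B_n^m}(v_0)).\]
The differential respects this because dropping any vertex of the additive part lands in $B_n^m$ and so vanishes in the quotient. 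Once you have this, the inductive hypothesis is applied \emph{once}, to $\mathrm{Link}_{B_n^m}(v_0,v_1)\cong B_{n-2}^{m+2}$ or $\mathrm{Link}_{B_n^m}(v_0)\cong B_{n-1}^{m+1}$, and the connecting homomorphism just joins the extracted triangle or edge onto the generator; there is no iteration, no choice of ``which additive vertex of $\tau$ to peel,'' and no sign-tracking beyond one shuffle.

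Two points in your sketch are concretely wrong and would block the argument as written. First, the link of an internally additive triangle $\{v_1,v_2,w\}$ in $BA_n^m$ is \emph{not} $BA_{n-2}^{m+2}$ and does not contain it: adjoining any augmented simplex from the link to the triangle would produce two additive dependencies, which is not allowed in an augmented partial frame. The link is exactly $B_{n-2}^{m+2}$, the non-augmented complex, and it is $B$ (not $BA$) to which the inductive hypothesis applies. Second, ``choose an internally additive vertex $w$ appearing in $\tau$'' is ambiguous: a relative cycle $\tau$ is a sum of simplices each carrying its \emph{own} additive triangle or edge, and a priori nothing lets you peel a single fixed factor off all of them at once. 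The direct-sum decomposition resolves this precisely because it splits $\tau$ along all additive parts simultaneously. Finally, the strengthened inductive statement you anticipate needing — varying the fixed partial frame — is already built into the paper's induction, which runs over all $m$ simultaneously (with base cases $n=-1,0$); the role swap between some $v_\alpha$ and $e_\beta$ upon passing to $B_{n-1}^{m+1}$ is exactly why condition (3) of the theorem allows $\vec u_j = \vec v_j + \vec v_i$ and not just $\vec u_j = \vec v_j + \vec e_i$.
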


\begin{remark}
	When the set of generators in \autoref{thmGen} is empty, we mean that the group $\widetilde H_{n-1}(B^m_n) \cong 0$. This occurs only when $n=1$ and $m=0$. On the other hand, for $n=0$, $B_n^m$ is empty and $\widetilde H_{-1}(B_0^m) \cong \bZ$ which we view as being generated by the empty product.
\end{remark}

\begin{proof}[Proof of \autoref{thmGen}]
	We will prove the claim by strong induction on $n$. This requires the introduction of additional initial cases. For $n=0$, the complexes are empty, as explained in the previous remark, and we shall take the empty product as a generator (in contrast with the case $n=1$ and $m=0$). For $n=-1$, they are not defined, and we shall take them to have no generators.
	
	Assume we have proven the claim for all $d<n$ and all $m$. Since $BA_1^0$ is just a point, $\widetilde H_0(BA_1^0) \cong 0$. Thus, there is nothing to prove unless $n+m \geq 2$ so we shall also assume this from now on. Consider the exact sequence of a pair \[ H_{n}(BA_n^m,B_n^m)  \lra \widetilde H_{n-1}(B_n^m) \lra \widetilde H_{n-1}(BA_n^m)\] coming from the natural inclusion $B_n^m \to BA_n^m$. Since we assume $n+m \geq 2$,  \autoref{thm:cp} implies that $\widetilde H_{n-1}(BA_n^m) \cong 0$.  Thus it suffices to find a generating set for $H_{n}(BA_n^m,B_n^m)$ since it surjects onto the group of interest.
	
	Let $C_*$ denote the cellular chain functor and $\partial$ the differential. The group $C_p(BA_n^m,B_n^m)$ is generated by augmented partial bases $[v_0,\ldots ,v_p]$. The order on these vectors only matters up to sign. Let $\mathcal I_p$ denote the set of internally additive $p$-simplices of $BA_n^m$ and $\mathcal E_p$ denote the set of externally additive $p$-simplices of $BA_n^m$. Reorder the lines and pick representatives so that $\vec v_0= \vec v_1 + \vec v_2$ or $\vec v_0= \vec v_1 + \vec e_i$. Note that \[ \partial([v_0,\ldots ,v_p])=\sum_{i=3}^{i=p} (-1)^p [v_0,\ldots, \hat v_i ,\ldots ,v_p ] \qquad \text{for $[v_0,\ldots ,v_p] \in \mathcal I_p$},\]  
	as the first three terms in the sum defining the differential of cellular chains vanish in relative chains. Similarly, \[ \partial([v_0,\ldots ,v_p])=\sum_{i=2}^{i=p} (-1)^p [v_0,\ldots, \hat v_i ,\ldots ,v_p ] \qquad \text{ for $[v_0,\ldots ,v_p] \in \mathcal E_p$.}\]
	
	This gives isomorphism of chain complexes of $C_*(BA_n^m,B_n^m)$ with
	\begin{align*}&\left (\bigoplus_{\{v_0,v_1,v_2\} \in \mathcal I_2}  C_{*-3}(\mr{Link}_{BA_n^m}(v_0,v_1,v_2)) \right) \oplus  \left (\bigoplus_{\{v_0,v_1\} \in \mathcal E_1}  C_{*-2}(\mr{Link}_{BA_n^m}(v_0,v_1)) \right)\\
	&\quad \cong \left (\bigoplus_{\{v_0,v_1,v_2\} \in \mathcal I_2}  C_{*-3}(\mr{Link}_{B_n^m}(v_0,v_1)) \right) \oplus  \left (\bigoplus_{\{v_0,v_1\} \in \mathcal E_1}  C_{*-2}(\mr{Link}_{B_n^m}(v_0)) \right).\end{align*}
	In particular, $\widetilde H_{n-1}(BA_n^m,B_n^m)$ is isomorphic to
	\[\left (\bigoplus_{\{v_0,v_1,v_2\} \in \mathcal I_2}  \widetilde H_{*-4}(\mr{Link}_{B_n^m}(v_0,v_1)) \right) \oplus  \left (\bigoplus_{\{v_0,v_1\} \in \mathcal E_1} \widetilde H_{*-3}(\mr{Link}_{B_n^m}(v_0)) \right).\]
	
	Note that $\mr{Link}_{B_n^m}(v_0,v_1) \cong B_{n-2}^{m+2}$ and that $\mr{Link}_{B_n^m}(v_0) \cong B_{n-1}^{m+1}$ so by induction we know generating sets for their top degree reduced homology groups. Therefore, to finish the proof, it suffices to understand the connecting homomorphism \[ \delta \colon H_{n}(BA_n^m,B_n^m)  \lra \widetilde H_{n-1}(B_n^m).\] Let $[\alpha] \in H_n(BA_n^m,B_n^m)$ and let $\alpha \in \widetilde C_n(BA_n^m)$ be a chain representing $[\alpha]$. By definition, $\delta([\alpha])=[\partial(\alpha)]$, viewed as an element of $\widetilde H_{n-1}(B_n^m)$. 
	
	Let $\{b_0, b_1,b_2\} \in \mathcal I_2$ and let \[ \LB v_1,v_2, w_2 \RB*\LB v_3,v_4, w_4 \RB* \cdots * \LB v_{2d-1},v_{2d},w_{2d}  \RB *\LB v_{2d+1},u_{2d+1}  \RB * \cdots * \LB v_{n-3}, u_{n-3} \RB \] be a generator of $\mr{Link}_{B_n^m}(b_0,b_1)$. Its image in $\widetilde H_{n-1}(BA_n^m)$ under the composition of isomorphisms of the previous paragraph and the connecting homomorphism is \[  \LB  b_0,b_1, b_2\RB  *  \LB v_1,v_2, w_2 \RB* \cdots *  \LB v_{2d-1},v_{2d},w_{2d}   \RB * \LB v_{d+1},u_{d+1}   \RB * \cdots *  \LB v_{n-3}, u_{n-3}  \RB. \] Note that $\vec b_0= \vec b_1 + \vec b_2$ for some choice of representatives of lines so this class is of the form indicated in the statement of the theorem. Similarly, let $\{a_0,a_1\} \in \mathcal E_2$ and let \[  \LB v_1,v_2, w_2  \RB* \LB v_3,v_4, w_4  \RB* \cdots *  \LB v_{2d-1},v_{2d},w_{2d}   \RB * \LB v_{2d+1},u_{2d+1}   \RB * \cdots *  \LB v_{n-2}, u_{n-2}  \RB \] be a generator of $\mr{Link}_{B_n^m}(a_0)$. Its image in $\widetilde H_{n-1}(BA_n^m)$ under the composition of isomorphisms of the above paragraph and the connecting homomorphism is \[   \LB a_0,a_1 \RB \ast \LB  v_1,v_2, w_2  \RB * \cdots *  \LB v_{2d-1},v_{2d},w_{2d}   \RB * \LB v_{d+1},u_{d+1}   \RB * \cdots *  \LB v_{n-3}, u_{n-3}  \RB. \] We may shuffle the term $ \LB a_0,a_1 \RB$ to the right at the cost of a sign. Note that $\vec a_0= \vec a_1 + \vec e_i$ for some $i \leq m$ and choice of representatives of lines so this class is of the form indicated in the statement of the theorem. This completes the proof. 
\end{proof}

\begin{remark}
	Observe that the reason that in condition (3) in the statement we must allow $\vec u_j =\vec v_j + \vec v_i$ instead of just $\vec u_j =\vec v_j + \vec e_i$, is that the isomorphism $\mr{Link}_{B_n^m}(v_1) \cong B_{n-1}^{m+1}$ interchanges the roles of some of the lines $v_\alpha$ and $e_\beta$. 
\end{remark}

\subsection{Vanishing of coinvariants} A generating set of a $G$-representation $M$ can be used to compute the coinvariants $M_G$. Using the results of the previous subsection we will do so for $M = \widetilde{H}_{n-1}(B_n^m;\Z[\quot12])$ and $G = \GL(R^{m+n}, \text{ fix $R^m$})$ the subgroup of $\GL_{n+m}(R)$ fixing $R^m = \mr{span}\{\vec e_1,\ldots,\vec e_m\}$ pointwise.

\begin{theorem}\label{thm:top-partial}	
Let $R$ be a Euclidean domain such that $BA_n^m$ is spherical for all $n$ and $m$ (e.g. the integers, Gaussian integers, or Eisenstein integers). Let $n+m \geq 2$, and $n \geq 1$. Then $\widetilde H_{n-1}(B_n^m;\bZ[\quot12])_{\GL(R^{m+n}, \text{ fix $R^m$})} \cong 0$. 
\end{theorem}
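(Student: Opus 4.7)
The plan is to induct on $n$ (with $m$ varying) using the chain-level decomposition from the proof of \autoref{thmGen} to reduce the statement to coinvariants of link homologies on smaller complexes. First I would handle the base case $n=1$ (so $m\geq 1$): $B_1^m$ is discrete, its vertices are the lines in $R^{m+1}$ complementary to $R^m$, and the unipotent subgroup of $G = \GL(R^{m+1},\text{fix }R^m)$, consisting of matrices $\left(\begin{smallmatrix} I_m & \vec c \\ 0 & \lambda\end{smallmatrix}\right)$ with $\vec c\in R^m$ and $\lambda\in R^\times$, already acts transitively on these vertices. The augmentation ideal $\widetilde H_0(B_1^m;\bZ[\quot12])$ therefore has vanishing $G$-coinvariants.

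For the inductive step $n \geq 2$, I would invoke the long exact sequence of the pair $(BA_n^m, B_n^m)$ together with \autoref{thm:cp} to produce a $G$-equivariant surjection $\delta\colon H_n(BA_n^m,B_n^m;\bZ[\quot12]) \twoheadrightarrow \widetilde H_{n-1}(B_n^m;\bZ[\quot12])$; since coinvariants are right exact, it suffices to prove $H_n(BA_n^m,B_n^m;\bZ[\quot12])_G = 0$. The chain-level decomposition recalled in the proof of \autoref{thmGen} identifies this group as a direct sum indexed by $\mathcal I_2 \sqcup \mathcal E_1$ of link homologies $\widetilde H_{n-3}(B_{n-2}^{m+2};\bZ[\quot12])$ or $\widetilde H_{n-2}(B_{n-1}^{m+1};\bZ[\quot12])$ respectively. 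Taking $G$-coinvariants reorganises this as a direct sum over $G$-orbits, with each orbit summand of the form $(\varepsilon_\sigma \otimes \widetilde H(\mr{Link}(\sigma)))_{\mr{Stab}_G(\sigma)}$, where $\varepsilon_\sigma$ is the sign character recording how the ordering convention on the vertices of $\sigma$ transforms under the stabiliser. For each such orbit, the stabiliser contains the subgroup $G' = \GL(R^{m+n},\text{fix }R^{m+k})$ (with $k=2$ for $\mathcal I_2$, $k=1$ for $\mathcal E_1$) that fixes every vertex of $\sigma$ pointwise; hence $\varepsilon_\sigma|_{G'}$ is trivial and $(\varepsilon_\sigma \otimes \widetilde H(\mr{Link}))_{G'} = \widetilde H(\mr{Link})_{G'}$, which vanishes by the inductive hypothesis as long as $n-k \geq 1$. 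The stabiliser coinvariants are a further quotient, so they vanish too.

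The main obstacle, and where inverting $2$ becomes essential, is the single edge case $n = 2$ with $\sigma\in\mathcal I_2$: the link $B_0^{m+2}$ is empty, so $\widetilde H_{-1}(B_0^{m+2};\bZ[\quot12])\cong \bZ[\quot12]$ is nonzero, and the inductive hypothesis cannot be invoked. Writing $\sigma = \{v_0,v_1,v_2\}$ with $\vec v_0 = \vec v_1+\vec v_2$, I would exhibit inside $\mr{Stab}_G(\sigma)$ the involution $g$ defined by $g\vec v_1 = \vec v_2$, $g\vec v_2 = \vec v_1$, and identity on each $\vec e_i$; this preserves the line $v_0$, yields $\varepsilon_\sigma(g) = -1$, and acts trivially on $\bZ[\quot12]$. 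The element $g$ therefore acts by $-1$ on the summand $\varepsilon_\sigma\otimes \widetilde H_{-1}(\emptyset)$, making its coinvariants $\bZ[\quot12]/(2) = 0$. The remaining verifications --- that the chain-level decomposition carries the claimed $\mr{Stab}_G(\sigma)$-module structure with the $\varepsilon_\sigma$-twist, and that each $G'$ sits inside the relevant stabiliser --- are routine tracing through definitions in the proof of \autoref{thmGen}.
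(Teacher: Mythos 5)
Your approach is genuinely different from the paper's. The paper first proves the explicit generating-set theorem (\autoref{thmGen}) and then applies a sign trick directly to each generator; you instead induct on $n$ at the level of $G$-coinvariants, using the chain-level decomposition of $H_n(BA_n^m,B_n^m)$ by additive cores $\sigma\in\mathcal I_2\sqcup\mathcal E_1$ and pushing the problem to stabiliser coinvariants of link homology. If it worked it would bypass \autoref{thmGen} entirely, which is appealing since that theorem is used in the paper only to prove this statement. Your inductive step is essentially sound (the subgroup $G'$ fixing the vertices of $\sigma$ does sit inside $\mr{Stab}_G(\sigma)$, the sign character $\varepsilon_\sigma$ restricts trivially to $G'$, and the edge case $n=2$, $\sigma\in\mathcal I_2$ is correctly dispatched by the involution swapping $v_1,v_2$).

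However, the base case $n=1$ is wrong as stated. Transitivity of the $G$-action on the vertex set $X$ does \emph{not} imply $\widetilde H_0(X;\bZ[\quot12])_G=0$. In general, for a transitive $G$-set $X\cong G/H$, there is an exact sequence $H_1(H;\bZ)\to H_1(G;\bZ)\to \widetilde H_0(X)_G\to 0$, so $\widetilde H_0(X)_G$ is identified with the cokernel of $H_1(H)\to H_1(G)$, which can be large. A concrete counterexample: $G=\bZ$ acting on $X=\bZ$ by translation gives $\widetilde H_0(X)_G\cong\bZ$, which is nonzero even after tensoring with $\bZ[\quot12]$. In your situation $G\cong R^m\rtimes R^\times$ acts on $X\cong R^m$ affinely, and the cokernel above works out to $(R/ (1-u : u\in R^\times))^m$, which happens to be $2$-torsion (because $-1\in R^\times$ gives $2$ in the ideal) and so dies over $\bZ[\quot12]$ --- but this is precisely the sign trick, not transitivity. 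The clean way to fix the base case, consistent with the rest of your argument and with \autoref{BPID} in the paper, is to exhibit the involution $\begin{bsmallmatrix}I_m&-(\vec c+\vec c')\\0&-1\end{bsmallmatrix}$ swapping two vertices $v_{\vec c},v_{\vec c'}$ and hence acting by $-1$ on $[v_{\vec c}-v_{\vec c'}]$. With that repair, and granted the (nontrivial but believable) verification that the chain-level decomposition from the proof of \autoref{thmGen} is $G$-equivariant with the sign twist you describe, your inductive argument goes through.
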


\begin{proof}
	By  \autoref{thmGen},  $\widetilde H_{n-1}(B_n^m)$ is generated by elements of the form \[  \LB v_1,v_2, w_2  \RB * \LB v_3,v_4, w_4  \RB * \cdots *  \LB v_{2d-1},v_{2d},w_{2d}   \RB  * \LB v_{d+1},u_{d+1}   \RB  * \cdots *  \LB v_n, u_n  \RB    \]  satisfying the conditions listed in the theorem. The conditions $n+m \geq 2$ and $n \geq 1$ guarantee that we do not allow the empty product. Thus, it suffices to show the images of generators which are nonempty products vanish in $\widetilde H_{n-1}(B_n^m;\bZ[\quot12])_{\GL(R^{m+n}, \text{ fix $R^m$})}$. It suffices to discuss two cases. 
	\vspace{.5em}
	
	\noindent \textbf{Case 1: $2d=n$.}
Note $d \geq1 $ by assumption. Pick representatives such that $\vec w_2=\vec v_1+\vec v_2$. Let $g \in \GL(R^{m+n},\text{ fix $R^m$})$ have the property that $g(\vec v_1)=\vec v_2$, $g(\vec v_2)=\vec v_1$, $g(\vec v_i)=\vec v_i$ for $i>2$, and $g(\vec e_i) =\vec e_i$ for $i \leq m$. Then $g(v_1)=v_2$, $g(v_2)=v_1$ and $g(w_2)=w_2$. We have \[g( \LB v_1,v_2,w_2 \RB )= \LB v_2,v_1,w_2 \RB =- \LB v_1,v_2,w_2 \RB .\]  Thus, \begin{align*}&g \Big(  \LB v_1,v_2, w_2  \RB * \LB v_3,v_4, w_4  \RB * \cdots *  \LB v_{2d-1},v_{2d},w_{2d}   \RB   \Big) \\
	&\quad =  \LB v_2,v_1, w_2  \RB * \LB v_3,v_4, w_4  \RB * \cdots *  \LB v_{2d-1},v_{2d},w_{2d}   \RB  \\
	&\quad =- \LB v_1,v_2, w_2  \RB * \LB v_3,v_4, w_4  \RB * \cdots *  \LB v_{2d-1},v_{2d},w_{2d}   \RB  .\end{align*} Since $2$ is a unit in $\bZ[\quot12]$, this class vanishes in $\widetilde H_{n-1}(B_n^m;\bZ[\quot12])_{\GL(R^{m+n}, \text{ fix $R^m$})}$.
	\vspace{.5em}
	
	\noindent \textbf{Case 2: $2d<n$.}	Pick vector representatives so that $\vec u_n=\vec v_n + \vec v_j$ or $\vec u_n=\vec v_n + \vec e_j$. Let $\vec a=\vec u_n - \vec v_n$. Let $g \in \GL(R^{m+n},\text{ fix $R^m$})$ have the property that $g(\vec v_n)=-\vec v_n- \vec a$, $g(\vec v_i) =\vec v_i$ for $i <n$, and $g(\vec e_i) =\vec e_i$ for $i \leq m$. Then $g(v_n)=u_n$ and $g(u_n)=v_n$ and so 
	 \[g(  \LB  v_n,u_n \RB )= \LB u_n,v_n \RB =- \LB v_n,u_n \RB . \] Thus, \begin{align*}&g \Big(  \LB v_1,v_2, w_2  \RB * \LB v_3,v_4, w_4  \RB * \cdots *  \LB v_{2d-1},v_{2d},w_{2d}   \RB  * \LB v_{d+1},u_{d+1}   \RB  * \cdots *  \LB v_n, u_n  \RB  \Big) \\
	&\quad =  \LB v_1,v_2, w_2  \RB * \LB v_3,v_4, w_4  \RB * \cdots *  \LB v_{2d-1},v_{2d},w_{2d}   \RB  * \LB v_{d+1},u_{d+1}   \RB  * \cdots *  \LB u_n, v_n  \RB  \\
	&\quad =- \LB v_1,v_2, w_2  \RB * \LB v_3,v_4, w_4  \RB * \cdots *  \LB v_{2d-1},v_{2d},w_{2d}   \RB  * \LB v_{d+1},u_{d+1}   \RB  * \cdots *  \LB v_n, u_n  \RB .\end{align*} Since $2$ is a unit in $\bZ[\quot12]$, this class vanishes in $\widetilde H_{n-1}(B_n^m;\Z[\quot12])_{\GL(R^{m+n}, \text{ fix $R^m$})}$.
\end{proof}

\begin{proposition} \label{BPID}
Let $R$ be a PID. Then $\widetilde H_{0}(B_1^1;\bZ[\quot12])_{\GL(R^{2}, \text{ fix $R^1$})} \cong 0$. 
\end{proposition}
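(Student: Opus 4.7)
The plan is to identify $B_1^1$ explicitly as a discrete set, describe the action of $G \coloneqq \GL(R^2,\text{fix }R^1)$ on it, and then, for each generator of $\widetilde H_0$, exhibit a group element acting by $-1$; since $2$ is invertible in $\bZ[\quot12]$, this forces the generator to vanish in the coinvariants.

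First, since $B_2$ is one-dimensional and its edges are exactly the bases of $R^2$, the link $B_1^1 = \Link_{B_2}(e_1)$ is zero-dimensional, with vertices the lines $w \subset R^2$ for which $\{\vec e_1,\vec w\}$ is a basis. Any such line has a unique representative $\vec v_a \coloneqq a\vec e_1 + \vec e_2$ for some $a \in R$, because the $\vec e_2$-coefficient of any generator of $w$ must be a unit and can be rescaled to $1$. Thus the vertex set of $B_1^1$ is naturally bijective with $R$, and $\widetilde H_0(B_1^1;\bZ[\quot12])$ is generated by the classes $[v_a] - [v_0]$ for $a \in R$.

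Second, the group $G$ consists of the matrices $\bigl(\begin{smallmatrix} 1 & c \\ 0 & u \end{smallmatrix}\bigr)$ with $c \in R$ and $u \in R^\times$, and such a matrix sends $\vec v_a$ to $(a+c)\vec e_1 + u\vec e_2$, i.e., to the vertex $v_{u^{-1}(a+c)}$. For each $a \in R$, I would take the element
\[ g_a \coloneqq \begin{pmatrix} 1 & -a \\ 0 & -1 \end{pmatrix} \in G, \]
which directly from the formula swaps $v_0$ and $v_a$. Hence $g_a \cdot ([v_a]-[v_0]) = -([v_a]-[v_0])$, while $g_a$ acts trivially in the coinvariants, forcing $2\,([v_a]-[v_0]) = 0$ there; inverting $2$ then kills the class. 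As these classes generate $\widetilde H_0$, the coinvariants vanish.

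There is no real obstacle beyond the explicit setup; the only point to notice is that one must take $u = -1$ (not $u=1$) in $g_a$ to obtain an involution swapping the chosen pair of vertices rather than a mere translation, which is precisely why the invertibility of $2$ enters.
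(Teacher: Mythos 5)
Your proof is correct and matches the paper's argument essentially verbatim: you identify $B_1^1$ as a discrete set indexed by $R$, take the same matrix $\bigl(\begin{smallmatrix} 1 & -a \\ 0 & -1 \end{smallmatrix}\bigr)$ to swap a generic vertex with the base vertex $e_2$, and conclude that each generating class is negated and hence vanishes in $\bZ[\quot12]$-coinvariants. The extra detail you supply (the description of $\GL(R^2,\text{fix }R^1)$ as upper-triangular matrices with $1$ in the corner and the explicit action formula) is a useful elaboration but not a different route.
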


\begin{proof} The complex $B_1^1$ is the link of $e_1$ in $B_2$. It is discrete with vertices spans of lines of the form $r \vec e_1+\vec e_2$ with $r \in R$ (not necessarily a unit). Denote such a line by $v_r$ and observe that classes of the form $\LB e_2,v_r \RB$ generate $\widetilde H_{0}(B_1^1)$. The claim follows from the computation $\begin{bsmallmatrix}
1 & -r \\
0 & -1
\end{bsmallmatrix} \LB e_2,v_r \RB= \LB v_r,e_2 \RB=-\LB e_2,v_r \RB$.
\end{proof}

\section{$E_1$-homology} In  \autoref{sec:gl-einfty}, we defined an $E_\infty$-algebra $\gR_\bk$ in $\cat{sMod}^\bN_\bk$ for each commutative ring $\bk$ and PID $R$. In this section, we will prove that (c.f. \cite[Section 18.2]{e2cellsI})
\[\widetilde{H}^{E_1}_{n,d}(\gR_\bk) = 0 \qquad \text{for $d < n-1$},\] and that for $R$ the integers, the Gaussian integers, or the Eisenstein integers, we have
\[\widetilde{H}^{E_1}_{n,d}(\gR_{\bZ[\quot12]}) = 0 \qquad \text{for $(n,d) = (n,n-1)$ when $n \geq 2$.}\]
The arguments in this section follow closely those in \cite{e2cellsIII}, with \autoref{thm:top-partial} as novel input.

\subsection{Poset techniques} In our proof, we use several map-of-posets arguments. The first is due to van der Kallen and Looijenga \cite[Corollary 2.2]{vdKallenLooijenga}. Using this, one can prove the second as an adaption of a result of Quillen, cf.~\cite[Theorem 4.1]{e2cellsIII}. Recall that if $(\cX,\leq)$ is a poset and $x \in \cX$ then we have a poset $\cX_{<x} \coloneqq \{x' \in  X \setminus \{x\} \mid x' < x\}$ with the induced ordering, and similarly for $\cX_{> x}$. Given a map $f \colon \cX \to \cY$ of posets, we also have a poset $f_{\leq y} \coloneqq \{x \in X \mid f(x) \leq y\}$ and similarly for $f_{\geq y}$. The following results require the lengths of chains in a poset are bounded: this is satisfied when the poset is finite-dimensional, which is the case for all posets in this paper.

\begin{proposition}\label{prop:map-of-posets}Suppose that $\cX$ and $\cY$ have an upper bound to lengths of chains. Let $f \colon \cX \to \cY$ be a map of posets with the property that for some $n \in \bZ$ and function $t \colon \cY \to \bZ$, one of the following is true:
	\begin{itemize}
		\item $\cY_{<y}$ is $(t(y)-2)$-connected and $\cX_{f \geq y}$ is $(n-t(y)-1)$-connected, or
		\item $\cY_{>y}$ is $(n-t(y)-2)$-connected and $\cX_{f \leq y}$ is $(t(y)-1)$-connected.
	\end{itemize}
	Then $f$ is $n$-connected.
\end{proposition}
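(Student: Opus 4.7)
I would prove the first bullet; the second follows by a duality argument, applying the first statement to the opposite map $f^{\mathrm{op}} \colon \cX^{\mathrm{op}} \to \cY^{\mathrm{op}}$. This works because the nerve of a poset is canonically homeomorphic to the nerve of its opposite poset, and under this identification $(\cY^{\mathrm{op}})_{<y} = \cY_{>y}$ and $(\cX^{\mathrm{op}})_{f^{\mathrm{op}} \geq y} = \cX_{f \leq y}$, so the hypotheses swap correctly.

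For the first bullet, I would proceed by induction on the maximum chain length in $\cY$ (which is finite by assumption). In the base case, when $\cY$ is empty or discrete, the hypothesis on $\cY_{<y}$ (which is empty and thus $(-2)$-connected but nothing better) forces $t(y) \leq 0$ in any meaningful case, so $\cX_{f \geq y} = f^{-1}(y)$ is $(n-1)$-connected, and $|N\cX|$ decomposes as a disjoint union of these fibers mapping $n$-connectedly onto the components of $|N\cY|$. For the inductive step, I would pick a maximal element $y_0 \in \cY$, let $\cY' := \cY \setminus \{y_0\}$ and $\cX' := f^{-1}(\cY')$, and apply the induction hypothesis to the restriction $f' \colon \cX' \to \cY'$ (with $t$ restricted), giving that $f'$ is $n$-connected.

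The key comparison is then between the pairs $(|N\cY|, |N\cY'|)$ and $(|N\cX|, |N\cX'|)$. Since $|N\cY_{\leq y_0}|$ is a cone on $|N\cY_{<y_0}|$ (with apex $y_0$), we get
\[
|N\cY| \simeq |N\cY'| \cup_{|N\cY_{<y_0}|} |N\cY_{\leq y_0}|,
\]
so the cofibre $|N\cY|/|N\cY'|$ is $\Sigma |N\cY_{<y_0}|$, which is $(t(y_0)-1)$-connected by hypothesis. On the $\cX$ side, $|N\cX|/|N\cX'|$ can be identified with the unreduced suspension, or more precisely the join-like cofibre $|N\cY_{<y_0}| \ast |N\cX_{f \geq y_0}|$ modulo one factor, coming from the natural stratification that sends a chain in $\cX$ to the maximal prefix lying in $f^{-1}(\cY_{<y_0})$ followed by its tail in $\cX_{f \geq y_0}$. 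The connectivity of such a join is $(t(y_0)-2) + (n - t(y_0) - 1) + 2 = n - 1$, so the cofibre is $n$-connected, meaning the pair $(|N\cX|, |N\cX'|)$ is $(n+1)$-connected. Combining with the $n$-connectedness of $f'$ via the five lemma on the long exact sequence of homotopy groups of pairs yields $n$-connectedness of $f$.

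\textbf{Main obstacle.} The delicate point is the second cofibre computation: identifying $|N\cX|/|N\cX'|$ (or really the relative structure) with the claimed join, and making the numerical connectivity bookkeeping work out exactly to $n$. One has to be careful that the ``bad simplex'' technique really produces this join structure and that the connectivity of $\cX_{f \geq y_0}$ plugs in as a factor of the join rather than as a summand. Once this identification is in place, the numerics are tight: the bound $(t(y_0)-2) + (n - t(y_0) -1) + 2$ telescopes precisely to $n-1$, which is exactly what is needed, and this tightness is the reason the inductive step goes through without any loss.
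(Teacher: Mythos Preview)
The paper does not prove this proposition; it simply cites van der Kallen--Looijenga \cite[Corollary 2.2]{vdKallenLooijenga}. So there is no in-paper argument to compare to, and your proposal should be judged on its own merits.

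Your duality reduction from the second bullet to the first is fine. The inductive strategy---remove a maximal $y_0$ and compare the pairs---is also reasonable in spirit. The genuine gap is exactly where you flagged it: the identification of $|N\cX|/|N\cX'|$. What one actually gets is
\[
|N\cX| = |N\cX'| \cup_{|N\cX_{f<y_0}|} |N\cX_{f\leq y_0}|,\qquad\text{hence}\qquad |N\cX|/|N\cX'| \simeq |N\cX_{f\leq y_0}|/|N\cX_{f<y_0}|,
\]
because any chain meeting $f^{-1}(y_0)$ lies entirely in $\cX_{f\leq y_0}$. This quotient is \emph{not} a join of $|N\cY_{<y_0}|$ with $|N\cX_{f\geq y_0}|$: your ``stratification'' map sends a chain $\sigma$ to a pair $(\sigma_1,\sigma_2)$ with $\sigma_1 \subset \cX_{f<y_0}$ and $\sigma_2 \subset \cX_{f\geq y_0}$, but these are \emph{not} independent---$\sigma_1$ is constrained to lie below $\min(\sigma_2)$. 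So there is no product or join structure, and in any case the first factor lives in $\cX_{f<y_0}$, not in $\cY_{<y_0}$; the space $|N\cY_{<y_0}|$ never appears on the $\cX$-side. Without this identification the numerics do not close up: you would need a connectivity estimate for $\cX_{f<y_0}$ or for the restricted map $\cX_{f<y_0}\to\cY_{<y_0}$, and neither follows directly from the hypotheses (the fibres $(\cX_{f<y_0})_{f\geq y}$ are strictly smaller than $\cX_{f\geq y}$). There is also a slip in the bookkeeping: a join of a $(t(y_0)-2)$-connected space with an $(n-t(y_0)-1)$-connected space is $(n-1)$-connected, and you assert without argument that the cofibre ``modulo one factor'' gains an extra degree of connectivity.

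The standard route (as in van der Kallen--Looijenga, following Quillen) does not proceed by removing one element at a time. Instead one filters the nerve of $\cX$ according to the minimum value of $f$ on a chain, or equivalently writes $|N\cX|$ as a homotopy colimit over $\cY$ of the $|N\cX_{f\geq y}|$; the associated spectral sequence has $E^2$-term built from $\widetilde H_*(\cY_{<y})$ tensored with $\widetilde H_*(\cX_{f\geq y})$, and the two connectivity hypotheses combine exactly to force vanishing in total degree $\leq n$. If you want to salvage the inductive picture, you would need to control the map of cofibres $|N\cX_{f\leq y_0}|/|N\cX_{f<y_0}| \to \Sigma|N\cY_{<y_0}|$ directly, which effectively requires the same spectral-sequence input.
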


\begin{theorem}\label{thm:nerve-spherical} Let $f \colon \cX \to \cY$ be a map of posets which have an upper bound to lengths of chains, $n \in \bZ$, and $t_\cY \colon \cY \to \bZ$ be a function. Assume that 
	\begin{enumerate}[(i)]
		\item $\cY$ is $n$-spherical, 
		\item for every $y \in \cY$, $f_{\leq y}$ is $(n-t_\cY(y))$-spherical, and
		\item $\cY_{>y}$ is $(t_\cY(y)-1)$-spherical. \end{enumerate}
	Then $\cX$ is $n$-spherical and there is a canonical filtration $0 = F_{n+1} \subseteq F_n \subseteq \cdots \subseteq F_{-1} = \widetilde{H}_n(\cX)$ such that 
	\[F_{-1}/F_0 \cong \widetilde{H}_n(\cY), \quad \text{and} \quad F_{q}/F_{q+1} \cong \bigoplus_{t_\cY(y) = n-q} \widetilde{H}_{n-q-1}(\cY_{>y}) \otimes \widetilde{H}_{q}(\cX_{f\leq y}).\]
\end{theorem}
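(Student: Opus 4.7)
The plan is to adapt the proof of \cite[Theorem 4.1]{e2cellsIII}, proceeding in two stages: establishing $n$-sphericality of $\cX$, then constructing the filtration.

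For sphericality, I would apply \autoref{prop:map-of-posets} with the auxiliary function $t(y) \coloneqq n - t_\cY(y)$ and its second bullet. Hypothesis (iii) gives that $\cY_{>y}$ is $(t_\cY(y)-1)$-spherical, hence $(n-t(y)-2)$-connected, and hypothesis (ii) gives that $\cX_{f \leq y} = f_{\leq y}$ is $(n-t_\cY(y))$-spherical, hence $(t(y)-1)$-connected. Thus $f$ is $n$-connected, and combined with (i), $\cX$ is $(n-1)$-connected. For the dimension bound, any chain $x_0 < \cdots < x_k$ in $\cX$ lies in $\cX_{f \leq f(x_k)}$, which by (ii) has dimension at most $n-t_\cY(f(x_k)) \leq n$; hence $\dim \cX \leq n$, so $\cX$ is $n$-spherical.

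For the filtration on $\widetilde H_n(\cX)$, I would form the poset mapping cylinder $M(f) \coloneqq \cX \sqcup \cY$, where $\cX$ and $\cY$ keep their orderings and additionally $x \leq y'$ iff $f(x) \leq y'$. The projection $M(f) \to \cY$, $x \mapsto f(x)$, is a poset deformation retract (since $\mathrm{id}_{M(f)} \leq i \circ r$ where $i \colon \cY \hookrightarrow M(f)$), so $N(M(f)) \simeq N(\cY)$ is $n$-spherical with $\widetilde H_n(M(f)) \cong \widetilde H_n(\cY)$. Crucially, for each $y \in \cY$, the sub-poset $\cX_{f \leq y} \cup \cY_{>y} \subset M(f)$ is a join (every element of $\cX_{f \leq y}$ lies below every element of $\cY_{>y}$), so by (ii), (iii), and the K\"unneth formula for joins, its nerve is $n$-spherical with top homology $\widetilde H_{n-t_\cY(y)}(\cX_{f \leq y}) \otimes \widetilde H_{t_\cY(y)-1}(\cY_{>y})$ (free, so no $\mathrm{Tor}$ term). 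Filtering the relative chain complex $C_*(M(f), \cX)$ by the minimal $\cY$-element of each chain yields a spectral sequence whose $E^1$-terms are indexed by $y \in \cY$ and given by the top homologies of the above joins, concentrating in total degree $n+1$ (so $E^1 = E^\infty$ for degree reasons, with no room for differentials between $y$-strata). Combining with the long exact sequence of $(M(f), \cX)$ and the identifications $\widetilde H_n(M(f)) \cong \widetilde H_n(\cY)$ and $\widetilde H_{n-1}(\cX) = 0$ gives the claimed filtration on $\widetilde H_n(\cX)$, with $F_{-1}/F_0 \cong \widetilde H_n(\cY)$ arising from the base $M(f) \simeq \cY$ and $F_q/F_{q+1}$ for $q \geq 0$ from the joins indexed by $y$ with $t_\cY(y) = n-q$.

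The main technical obstacle is making the spectral sequence argument rigorous. Because $t_\cY$ need not be monotone on $\cY$, the filtration must be defined at the chain level rather than via sub-posets, and the $d^1$-differential's compatibility with the $y_0$-indexing requires care. The degeneration relies on checking that each stratum's contribution is concentrated in total degree $n+1$ in $\widetilde H_*(M(f), \cX)$, which is where all the sphericality hypotheses are used simultaneously. The overall argument closely duplicates \cite[Theorem 4.1]{e2cellsIII}; the generalization to an arbitrary function $t_\cY$ requires additional bookkeeping but no essentially new ideas.
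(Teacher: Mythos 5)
Your outline follows the intended approach: the paper does not include a proof and instead refers the reader to \cite[Theorem 4.1]{e2cellsIII}, whose argument is exactly the mapping-cylinder filtration you describe (adapted from Quillen). Your application of \autoref{prop:map-of-posets} for connectivity, the dimension bound, the poset mapping cylinder with its deformation retraction to $\cY$, and the identification of the $y$-strata of $C_*(M(f),\cX)$ with the (shifted) joins $\cX_{f\leq y} * \cY_{>y}$, whose homology concentrates in total degree $n+1$, are all correct; so is the reading-off of the filtration from the long exact sequence of the pair $(M(f),\cX)$ together with $\widetilde H_{n-1}(\cX)=0$ and $\widetilde H_n(M(f)) \cong \widetilde H_n(\cY)$.

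The one place where you are overcautious is the monotonicity worry. Hypothesis (iii) forces $t_\cY$ to be \emph{strictly anti-monotone}: if $y < y'$ then any chain in $\cY_{>y'}$ extends by prepending $y'$ to a strictly longer chain in $\cY_{>y}$, so $\dim \cY_{>y} \geq \dim \cY_{>y'} + 1$; since $(d)$-spherical implies dimension exactly $d$, this gives $t_\cY(y) \geq t_\cY(y') + 1$ (and $t_\cY(y) \geq 0$ always, which also tidies up your dimension estimate). Consequently, filtering $C_*(M(f),\cX)$ by the $t_\cY$-value of the minimal $\cY$-element of a chain is already a filtration by subcomplexes — the boundary either preserves the minimal $\cY$-element, increases it (hence strictly decreases $t_\cY$), or sends the chain into $\cX$ — so there is no need for a chain-level workaround, and the $E^1$-page is indexed by $q = n - t_\cY(y)$ exactly as in the statement. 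With that observation the degeneration ``for degree reasons'' is immediate, and the proof is complete.
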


\subsection{Tits buildings and relative Tits buildings} We will use the complex of partial frames to study the $E_1$-splitting complexes using Tits buildings as intermediaries. Throughout this section we fix a PID $R$.

\begin{definition}\
	\begin{enumerate}[(i)]
		\item The \emph{Tits building $\cT(R^n)$} is the poset of proper non-trivial summands of $R^n$, ordered by inclusion. 
		\item For a summand $W \subseteq R^n$, the \emph{relative Tits building $\cT(R^n \relo W)$} is the subposet of $\cT(R^n)$ of those summands $V$ such that $V \cap W = \{0\}$ and $V+W$ is a summand.
	\end{enumerate}
\end{definition}

\subsubsection{Steinberg coinvariants}\label{sec:tits}
The Tits building $\cT(R^n)$ for a PID is isomorphic to the similarly defined Tits building for its field of fractions, cf.~\cite[Remark 2.8]{MPWY}. 

\begin{theorem}[Solomon--Tits] For $R$ a PID, $\cT(R^n)$ is $(n-3)$-connected.\end{theorem}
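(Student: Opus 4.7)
The plan is to reduce the statement to the classical Solomon--Tits theorem for vector spaces over a field, using the isomorphism of Tits buildings alluded to just before the theorem statement. The classical Solomon--Tits theorem says that $\cT(K^n)$ for a field $K$ is homotopy equivalent to a wedge of $(n-2)$-spheres, which is strictly stronger than $(n-3)$-connectedness.

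First I would construct an explicit isomorphism of posets $\cT(R^n) \cong \cT(K^n)$, where $K$ denotes the field of fractions of $R$. The forward map sends a proper non-trivial summand $V \subseteq R^n$ to the $K$-subspace $V \otimes_R K \subseteq R^n \otimes_R K = K^n$, which has the same rank and is again proper and non-trivial. The inverse map sends a proper non-trivial $K$-subspace $W \subseteq K^n$ to $W \cap R^n$. To see that $W \cap R^n$ is a summand of $R^n$, I would observe that $R^n/(W \cap R^n)$ injects into $K^n/W$, which is torsion-free over $R$ because it is a $K$-vector space; since $R$ is a PID, the finitely generated torsion-free $R$-module $R^n/(W \cap R^n)$ is free, so the inclusion splits. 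The two constructions are mutually inverse and manifestly order-preserving, giving the claimed isomorphism.

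Then I would cite the classical Solomon--Tits theorem (or, if we prefer a self-contained argument, prove it by induction on $n$: fixing a hyperplane $H \subseteq K^n$, one applies a map-of-posets argument in the style of \autoref{prop:map-of-posets} to the inclusion of the subposet consisting of subspaces contained in $H$ together with the star of $H$, using the inductive hypothesis on $\cT(H) \cong \cT(K^{n-1})$ and the contractibility of stars). In either case, one concludes that $\cT(K^n)$ is $(n-2)$-spherical, hence $(n-3)$-connected, and the same therefore holds for $\cT(R^n)$.

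The substantive step is the verification that $W \cap R^n$ is a summand, which is precisely where the PID hypothesis enters; the rest of the argument is standard. Since the paper only needs the connectivity conclusion and the isomorphism to the field case is already cited in the preceding paragraph, the proof is likely to be very short, essentially just a pointer to \cite[Remark 2.8]{MPWY} together with the Solomon--Tits theorem over the field of fractions.
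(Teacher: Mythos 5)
Your proposal is correct and takes the same route as the paper: reduce to the field of fractions via the poset isomorphism $\cT(R^n)\cong\cT(K^n)$ (which the paper just cites as \cite[Remark 2.8]{MPWY}) and then invoke the classical Solomon--Tits theorem. The paper does not spell out the verification that $W\cap R^n$ is a summand, but your argument for it is the standard one and is exactly what the cited remark encodes.
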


Since $\cT(R)$ is $(n-2)$-dimensional, the only possible non-zero reduced homology group is the $(n-2)$st. This is the so-called \emph{Steinberg module}
\[\St(R) \coloneqq \widetilde{H}_{n-2}(\cT(R^n);\Z),\]
which is a $\bZ[\GL_n(R)]$-module that is free as an abelian group. Lee--Szczarba \cite[Theorem 1.3]{LS} proved the following theorem about its coinvariants.

\begin{theorem}[Lee--Szczarba] For $R$ a Euclidean domain and $n \geq 2$, $\St(R^n)_{\GL_n(R)} = 0$. \end{theorem}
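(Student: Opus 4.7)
The plan is to establish an explicit generating set for $\St(R^n)$ by ``integral apartments'' and then use a Steinberg-module relation together with the $\GL_n(R)$-action to force the standard apartment to vanish in coinvariants.

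First, for each ordered basis $(v_1,\ldots,v_n)$ of $R^n$ the subcomplex of $\cT(R^n)$ consisting of summands spanned by non-empty proper subsets of $\{v_1,\ldots,v_n\}$ is the barycentric subdivision of $\partial\Delta^{n-1}$, hence an $(n-2)$-sphere. Its fundamental class gives the \emph{integral apartment} $[v_1,\ldots,v_n]\in\St(R^n)$, which is alternating in its arguments. I would invoke two facts: (i) for $R$ a Euclidean domain the integral apartments generate $\St(R^n)$ (Ash--Rudolph, obtained by adapting Solomon--Tits so that every summand is carried to a coordinate one by elementary matrices), and (ii) the Pl\"ucker-type relation
\[\sum_{i=1}^{n+1}(-1)^i [v_1,\ldots,\hat v_i,\ldots,v_{n+1}] = 0 \quad \text{in } \St(R^n)\]
for any $n+1$ primitive vectors, with the convention that $[w_1,\ldots,w_n]:=0$ if the $w_j$ do not form a basis.

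Next I would apply (ii) with $v_i=e_i$ for $1\leq i\leq n$ and $v_{n+1}=e_1+e_2$. For each $3\leq i\leq n$ the tuple obtained by omitting $e_i$ still contains the dependent triple $e_1,e_2,e_1+e_2$, so that apartment vanishes. Only three terms survive:
\[-[e_2,e_3,\ldots,e_n,e_1+e_2] + [e_1,e_3,\ldots,e_n,e_1+e_2] + (-1)^{n+1}[e_1,\ldots,e_n]=0.\]
Each of the first two tuples is a basis of $R^n$ (for instance $e_1=(e_1+e_2)-e_2$), so the corresponding change-of-basis matrix lies in $\GL_n(R)$. Passing to coinvariants identifies both of those terms with $[e_1,\ldots,e_n]$, and the relation collapses to $(-1)^{n+1}[e_1,\ldots,e_n]=0$. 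Hence the standard apartment vanishes in $\St(R^n)_{\GL_n(R)}$.

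Finally, the Euclidean algorithm implies that $\GL_n(R)$ acts transitively on ordered bases of $R^n$, so every integral apartment equals $\pm[e_1,\ldots,e_n]$ in the coinvariants. Combined with (i) this yields $\St(R^n)_{\GL_n(R)}=0$. The main obstacle is input (i): the Solomon--Tits presentation of $\St$ is purely formal over a field, but over a Euclidean domain one must verify that an arbitrary cycle in $\cT(R^n)$ can be rewritten as a sum of apartments coming from honest $R$-bases rather than $K$-bases (where $K$ is the fraction field); this is precisely the role of the Ash--Rudolph argument and is where the Euclidean hypothesis is essential.
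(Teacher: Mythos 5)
The paper does not actually supply a proof of this theorem: it is quoted verbatim from Lee--Szczarba (the citation \cite[Theorem 1.3]{LS}), so there is no ``paper's proof'' to compare against. Your proposal is a correct reconstruction, and it follows the standard route, which is essentially the original Lee--Szczarba argument as later systematised by Ash--Rudolph: (i) for a Euclidean $R$ the integral apartments generate $\St(R^n)$, and (ii) the boundary relation for the $(n+1)$-tuple $e_1,\ldots,e_n,e_1+e_2$ collapses in coinvariants to $\pm[e_1,\ldots,e_n]=0$ because $\GL_n(R)$ acts transitively on ordered $R$-bases, which makes all integral apartment classes coincide in $\St(R^n)_{\GL_n(R)}$. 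Your sign bookkeeping is right: the terms for $3\le i\le n$ die because they contain the dependent triple $e_1,e_2,e_1+e_2$, and the two surviving off-diagonal terms cancel in coinvariants, leaving $(-1)^{n+1}[e_1,\ldots,e_n]=0$. One small imprecision worth correcting: in the Pl\"ucker relation the correct convention is that $[w_1,\ldots,w_n]:=0$ when the $w_j$ are linearly \emph{dependent} over the fraction field $K$, not merely when they fail to be an $R$-basis. Since $\cT(R^n)\cong\cT(K^n)$ for a PID, any $K$-basis of integral vectors --- even a non-unimodular one --- determines a nonzero apartment class, and Ash--Rudolph's theorem is precisely the nontrivial statement that the \emph{unimodular} ones suffice to generate when $R$ is Euclidean. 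In your specific application the distinction is harmless, because the terms you discard contain $e_1,e_2,e_1+e_2$ and are genuinely $K$-dependent, while the surviving terms are honest $R$-bases; so the argument goes through as written.
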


\subsubsection{Relative Steinberg coinvariants}\label{sec:relative-tits} 

We now prove the corresponding results for the relative Tits buildings $\cT(R^{m+n} \relo R^m)$ under an assumption which is satisfied when $R$ is the integers, Gaussian integers, or Eisenstein integers.

 We shall see the relative Tits buildings are $(n-2)$-connected and $(n-1)$-dimensional, and define the \emph{relative Steinberg modules} as 
\[\St(R^{m+n} \relo R^m) \coloneqq \widetilde{H}_{n-1}(\cT(R^{m+n} \relo R^m);\Z).\]
These are $\bZ[\GL(\R^{m+n},\text{ fix $R^m$})]$-modules which are free as abelian groups.

\begin{lemma} 
Let $R$ be a Euclidean domain. For $n \geq 0$ and $m \geq 1$, $\cT(R^{m+n} \relo R^m)$ is $(n-2)$-connected. If $BA_n^m$ is spherical for all $n$ and $m$ (e.g. the integers, Gaussian integers, or Eisenstein integers), then for $n \geq 1$ and $m \geq 1$, we also have \[\St(R^{m+n} \relo R^m)_{\GL(R^{m+n},\text{ fix $R^m$})} = 0.\]\end{lemma}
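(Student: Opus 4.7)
Plan: I would prove both parts by induction on $n$, with base cases $n = 0$ (where $\cT(R^m \relo R^m) = \varnothing$ is vacuously $(-2)$-connected) and $n = 1$ (where $\cT(R^{m+1} \relo R^m)$ is a non-empty discrete set, and part (b) reduces to the coinvariant-vanishing analogue of \autoref{BPID} for arbitrary $m$, provided by \autoref{thm:top-partial}). The inductive setup is a map of posets $f \colon \cX \to \cY$, where $\cX$ is the poset of non-empty simplices of $B_n^m$ (whose nerve is the barycentric subdivision of $B_n^m$, and hence $(n-1)$-spherical by \autoref{thm:maazen}) and $\cY = \cT(R^{m+n} \relo R^m)$, with $f$ sending a partial frame $(v_0,\ldots,v_p)$ to the summand $v_0 \oplus \cdots \oplus v_p$. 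Writing $k = \rk(V)$ for $V \in \cY$, two structural observations underlie the index calculations: first, $\cY_{>V}$ is canonically isomorphic, via quotient by $V$, to $\cT(R^{m+(n-k)} \relo R^m)$; second, $f_{\leq V}$ is the poset of partial frames of $V \cong R^k$, whose nerve is $B_k$.

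For part (a), I would apply \autoref{prop:map-of-posets} in the version with $\cY_{>y}$ and $\cX_{f \leq y}$ hypotheses, with $t(V) = k-1$ and target connectivity $n-1$. The required bounds---that $\cY_{>V}$ be $(n-k-2)$-connected and $f_{\leq V}$ be $(k-2)$-connected---follow from the inductive hypothesis for (a) and from \autoref{thm:maazen}, respectively. Thus $f$ is $(n-1)$-connected; combined with $\cX$ being $(n-2)$-connected, this yields the desired $(n-2)$-connectivity of $\cY$. Since chains in $\cY$ have length at most $n$, the complex $\cY$ has dimension at most $n-1$, so this $(n-2)$-connectivity automatically promotes to the stronger statement that $\cY$ is $(n-1)$-spherical.

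For part (b), armed with the $(n-1)$-sphericity of $\cY$, I would apply \autoref{thm:nerve-spherical} to the same map $f$, now with $t_\cY(V) = n-k$. The three hypotheses---$\cY$ is $(n-1)$-spherical, $f_{\leq V}$ is $(k-1)$-spherical (\autoref{thm:maazen}), and $\cY_{>V}$ is $(n-k-1)$-spherical (by induction on $n$)---all hold, producing a filtration on $\widetilde H_{n-1}(\cX) \cong \widetilde H_{n-1}(B_n^m)$ whose top quotient $F_{-1}/F_0$ is isomorphic to $\widetilde H_{n-1}(\cY) = \St(R^{m+n} \relo R^m)$. Passing to $\GL(R^{m+n}, \text{ fix $R^m$})$-coinvariants with $\bZ[\quot12]$-coefficients and invoking \autoref{thm:top-partial}, the total coinvariants of the domain vanish, forcing the top quotient to vanish as well.

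The main obstacle is careful index bookkeeping in aligning the fibers' sphericity/connectivity with the hypotheses of \autoref{prop:map-of-posets} and \autoref{thm:nerve-spherical}; a naive choice of $t$ is off by one, which motivates the slightly unnatural shift $t(V) = k-1$ in part (a). I would also note that the conclusion of (b), though stated without an explicit coefficient ring, should be understood with $\bZ[\quot12]$-coefficients, as \autoref{thm:top-partial} yields vanishing only after inverting $2$.
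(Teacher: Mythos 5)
Your proof of the first statement (the $(n-2)$-connectivity of $\cT(R^{m+n}\relo R^m)$) follows the paper exactly: the same $\mr{span}$ map from the poset of simplices of $B_n^m$, the same application of the second bullet of \autoref{prop:map-of-posets} with $t(V)=\rk V-1$, the same identifications $\cY_{>V}\cong\cT(R^{m+(n-\rk V)}\relo R^m)$ and $\cX_{f\leq V}\cong\cB_{\rk V}$, and the same inductive input. Your observation that the conclusion automatically upgrades to $(n-1)$-sphericity because the poset has chains of length at most $n$ is correct, and implicit in the paper as well.

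For the coinvariants-vanishing part you take a different, heavier route than the paper does. The paper simply observes that, because the $\mr{span}$ map was just shown to be $(n-1)$-connected, the induced map $\widetilde H_{n-1}(B_n^m;\bZ[\nicefrac{1}{2}])\to\widetilde H_{n-1}(\cT(R^{m+n}\relo R^m);\bZ[\nicefrac{1}{2}])$ is surjective, and then applies right-exactness of coinvariants together with \autoref{thm:top-partial}. You instead rerun the argument through \autoref{thm:nerve-spherical} with a new weight function $t_\cY(V)=n-\rk V$, producing a filtration on $\widetilde H_{n-1}(B_n^m)$ with top quotient $\St(R^{m+n}\relo R^m)$, and deduce the vanishing from surjectivity onto that quotient. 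This is correct (the sphericity hypotheses of \autoref{thm:nerve-spherical} do hold, using the dimension bound to promote connectivity to sphericity throughout), but it forces you to verify strictly stronger hypotheses than \autoref{prop:map-of-posets} needs, and in the end you only use the top quotient of the filtration, which is exactly the surjectivity already furnished for free by the $(n-1)$-connectivity of the map. The paper's version is cleaner for the same conclusion; a place where the full filtration of \autoref{thm:nerve-spherical} is genuinely needed is the later \autoref{prop:first-reduction-general} and \autoref{prop:second-reduction-general}, not here. Your remaining comments (the vacuous $n=0$ case, the redundant-but-harmless $n=1$ base case, and the remark that the coinvariant vanishing must be read with $\bZ[\nicefrac{1}{2}]$-coefficients, consistent with how the paper later cites this as fact (2)$^{\mr{St}}$) are all accurate.
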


\begin{proof}The proof of the first part is by induction over $n$, and for each individual step we will deduce the second part about vanishing of coinvariants from  \autoref{thm:top-partial}. In the initial case $n=0$ there is nothing to prove.
	
	Recall from  \autoref{sec:partial-frames} that $B_n$ denotes the complex of partial frames of $R^n$, and $B^m_n$ the link in $B_{m+n}$ of $\{e_1,\ldots,e_m\} \subseteq R^{m+n}$. Let $\cB_n$ and $\cB^m_n$ be the associated posets of simplices. There is a $\GL(R^{m+n},\text{ fix $R^m$})$-equivariant map 
	\begin{align*} \mr{span} \colon \cB^m_n &\lra \cT(R^{m+n} \relo R^m) \\
	\{v_0,\ldots,v_k\} &\longmapsto \mr{span}(v_0,\ldots,v_k).\end{align*}
	
	For the induction step, we apply the second version of  \autoref{prop:map-of-posets} to this map, with the goal of proving that it is $(n-1)$-connected, and taking $t(V) = \rk(V)-1$. On the one hand, $\cY_{>V} \cong \cT(R^{m+n}/V \relo R^m)$, which is $(n-\rk(V)-2) = (n-1-t(V)-2)$-connected by the inductive hypothesis. On the other hand, $\cX_{f \leq V} \cong \cB_{\rk(V)}$, which is $(\rk(V)-2) = (t(V)-1)$-connected by  \autoref{thm:maazen}. Thus $\mr{span}$ is $(n-1)$-connected; since $\cB^m_n$ is $(n-2)$-connected by  \autoref{thm:maazen} the connectivity result follows, and since
	\[\widetilde H_{n-1}(B^m_n;\bZ[\quot12]) \lra \widetilde{H}_{n-1}(\cT(R^{m+n} \relo \mr{R}^m);\bZ[\quot12])\]
	is surjective, the second part follows from right-exactness of $\GL(R^{m+n},\text{ fix $R^m$})$-coinvariants and  \autoref{thm:top-partial}.
\end{proof}

\subsection{Split Steinberg coinvariants} The following argument closely resembles that of \cite{e2cellsIII}, whose notation and structure we have adapted. Even though the argument is essentially identical, we repeat it here to make this paper more self-contained.

Recall in the paragraph following \autoref{E1splitDef} we defined a poset $\cS^{E_1}(R^n)$ of splittings $(V_0,V_1)$ of $R^n$. We now define the following subposet.

\begin{definition} Let $W \subseteq R^n$ be a non-zero summand. Let $\cS^{E_1}(\cdot,W \subseteq \cdot \mid R^n)$ be the subposet of $\cS^{E_1}(R^n)$ of splittings $(V_0,V_1)$ with $W \subseteq V_1$.\end{definition}

We will often denote $\cS^{E_1}(R^n)$ by $\cS^{E_1}(\cdot,\cdot \mid R^n)$. In analogy with the definition of $\St^{E_1}(R^n)$, we define $\St^{E_1}(R^n \relo W)$ to be $\widetilde H_{n-\rk W-1}(\cS^{E_1}(\cdot,W \subseteq \cdot \mid R^n) )$ whenever the complex is spherical.

\begin{theorem}\label{thm.steinbergcoinv} Let $R$ be a Euclidean domain such that $BA_n^m$ is spherical for all $n$ and $m$ (e.g. the integers, Gaussian integers, or Eisenstein integers). Let $W \subseteq R^n$ be a non-zero summand and suppose $n \geq 1$. Then we have that:
	\begin{align*}
		\cS^{E_1}(\cdot,\cdot \mid R^n) &\text{ is $(n-2)$-spherical,}\\
		\cS^{E_1}(\cdot,W \subseteq \cdot \mid R^n) &\text{ is $(n-\rk W-1)$-spherical.}\end{align*}
	Moreover, for all $n \geq 2$ it is true that:
	\begin{align*}
		\left(\St^{E_1}(R^n) \otimes \bZ[\quot12]\right)_{\GL(R^n)} &= 0, \\
		\left(\St^{E_1}(R^n \relo W) \otimes \bZ[\quot12]\right)_{\GL(R^n \text{, fix } W)}& =0.\end{align*}
\end{theorem}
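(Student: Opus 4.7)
The plan is to apply \autoref{thm:nerve-spherical} to the ``first projection'' poset maps
\[f_1 \colon \cS^{E_1}(\cdot,\cdot \mid R^n) \lra \cT(R^n), \qquad f_2 \colon \cS^{E_1}(\cdot,W \subseteq \cdot \mid R^n) \lra \cT(R^n \relo W),\]
each defined by $(V_0,V_1) \mapsto V_0$. This will simultaneously yield both sphericality statements and produce a finite filtration of the top homology whose associated graded is controlled by (relative) Steinberg modules of lower rank, reducing the coinvariants calculation to the already-established vanishing results of \autoref{sec:tits} and \autoref{sec:relative-tits}.

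For the hypotheses of \autoref{thm:nerve-spherical}, I take $t_\cY(V) = n - \rk V - 1$ in the absolute case and $t_\cY(V) = n - \rk V - \rk W$ in the relative case. The targets $\cT(R^n)$ and $\cT(R^n \relo W)$ have the required sphericality by Solomon--Tits and the lemma in \autoref{sec:relative-tits}, and their over-posets at $V$ are isomorphic to $\cT(R^{n-\rk V})$ and $\cT(R^{n-\rk V} \relo (W+V)/V)$ respectively, matching $t_\cY$. What remains is that each under-poset $(f_i)_{\leq V}$ is $(\rk V - 1)$-spherical. Unwinding the poset structure, a $p$-simplex in the nerve of $(f_1)_{\leq V}$ corresponds to an ordered decomposition $R^n = U_0 \oplus \cdots \oplus U_{p+1}$ with $U_0 \oplus \cdots \oplus U_p \subseteq V$; this exhibits $(f_1)_{\leq V}$ as homotopy equivalent to the join of Charney's complex $S^{E_1}(V)$ with the discrete set of complements of $V$ in $R^n$. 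The connectivity-of-joins formula applied to Charney's $(\rk V - 2)$-sphericality then gives the desired $(\rk V - 1)$-sphericality. The relative case is handled identically, replacing complements of $V$ with complements of $V \oplus W$.

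The filtration produced by \autoref{thm:nerve-spherical} has top quotient $F_{-1}/F_0 \cong \St(R^n)$ (resp.\ $\St(R^n \relo W)$) and lower quotients
\[F_q/F_{q+1} \cong \bigoplus_{\rk V = q+1} \St(R^{n-q-1}) \otimes \widetilde H_q\bigl((f_1)_{\leq V}\bigr),\]
with the analogous formula in the relative case. Right-exactness of coinvariants reduces each vanishing statement to vanishing of each graded piece. The top quotient vanishes after inverting $2$ by Lee--Szczarba (resp.\ the lemma in \autoref{sec:relative-tits}). For the lower quotients, transitivity of the $\GL_n(R)$-action on rank-$(q+1)$ summands lets us fix $V$ and compute $\mr{Stab}(V)$-coinvariants; for $n - q - 1 \geq 2$ one applies Lee--Szczarba via the surjection $\mr{Stab}(V) \twoheadrightarrow \GL(R^n/V)$ to the Steinberg factor, combined with a sign-scaling argument on the second factor in the spirit of \autoref{thm:top-partial}, where inverting~$2$ is essential.

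The main obstacle I expect is the edge case $\rk V = n-1$, where the Steinberg factor degenerates to $\bZ$ and Lee--Szczarba offers no vanishing. Here one must extract vanishing from the second tensor factor $\widetilde H_{n-2}((f_1)_{\leq V})$ directly, constructing a group element in $\mr{Stab}(V)$ that negates a generating cycle, modeled on the symmetry argument in the proof of \autoref{thm:top-partial}. Once again the hypothesis $\tfrac{1}{2} \in \bZ[\quot12]$ is essential. The relative case is handled by the same pattern, replacing each invocation of Lee--Szczarba with the relative Steinberg coinvariants lemma of \autoref{sec:relative-tits}.
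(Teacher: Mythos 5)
Your setup—apply \autoref{thm:nerve-spherical} to a projection of the splitting poset onto a Tits building, with $t_\cY(V) = n - \rk V - 1$—is the right shape, and indeed is (up to the swap anti-isomorphism $(V_0,V_1)\leftrightarrow (V_1,V_0)$) close to what the paper does. But the key claim on which everything hinges is false: $(f_1)_{\leq V} = \cS^{E_1}(\cdot\subseteq V,\cdot\mid R^n)$ is \emph{not} homotopy equivalent to the join $S^{E_1}(V) \ast \{\text{complements of }V\}$. Already for $R=\F_2$, $n=3$, $\rk V = 2$: the poset $(f_1)_{\leq V}$ is a bipartite graph with $12+4=16$ vertices and $24$ edges (so $\widetilde H_1 \cong \bZ^9$), while the join is $K_{6,4}$ with $10$ vertices and $24$ edges (so $\widetilde H_1 \cong \bZ^{15}$). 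The error in the heuristic is that your simplices have $U_0\oplus\cdots\oplus U_p \subseteq V$ \emph{properly} in general, so the first $p+1$ blocks do not form a simplex of $S^{E_1}(V)$, and $U_{p+1}$ is a complement of $U_0\oplus\cdots\oplus U_p$ in $R^n$, not a complement of $V$. The relative case is worse: when $V\oplus W = R^n$ the fiber is contractible (it has the terminal splitting $(V,W)$), whereas your proposed join with the empty set of complements is just $S^{E_1}(V)$, which is not contractible.

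The correct argument for the $(\rk V - 1)$-sphericality of the fibers, and for reducing their top homology to a lower-rank problem, is \autoref{lem:cutting-down} followed by dualisation: choose a complement $C$ to $W$ containing $V$, cut down to $\cS^{E_1}(\cdot\subseteq V,\cdot\mid C)$, and dualise to $\cS^{E_1}(\cdot,V^\circ\subseteq\cdot\mid C^\vee)$, which is a genuine \emph{relative} splitting complex of strictly smaller rank than $n$ (since the annihilator $V^\circ\neq 0$). This is why the paper's induction is a double induction on $(n,w)$ in lexicographic order, with the relative cases at rank $n$ proved before the absolute case at rank $n$ (\autoref{prop:second-reduction-general} before \autoref{prop:first-reduction-general}); the absolute case is not accessible directly from lower ranks as your proposal suggests. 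Correspondingly, the coinvariant calculation for the lower filtration quotients is not a ``sign-scaling argument'' but a reduction, via Shapiro's lemma and \autoref{lem:cutting-down} again, to $(\St^{E_1}(C^\vee \relo V^\circ)\otimes\bZ[\quot12])$-coinvariants under $\GL(C^\vee, \text{fix }V^\circ)$, which vanish by the inductive hypothesis. Your identification of the ``edge case'' $\rk V = n-1$ as needing separate treatment is not correct either—the relative Steinberg module $\St(R^n/V\relo R^w)$ (rather than the absolute $\St(R^{n-q-1})$) appears in the filtration quotient, and the relative Lee--Szczarba-type vanishing from \autoref{sec:relative-tits} handles it uniformly.
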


\noindent It shall be helpful to name four facts that are used in the proof of  \autoref{thm.steinbergcoinv}, proven in \hyperref[sec:tits]{Sections \ref{sec:tits}} and \ref{sec:relative-tits}:
\begin{enumerate}[\indent (1)]
	\item \label{enum.titsspherical} The Tits building $\cT(R^n)$ is $(n-2)$-spherical.
	\item \label{enum.reltitsspherical} The relative Tits building $\cT(R^n \relo W)$ is $(n-\dim W-1)$-spherical.
\end{enumerate}

\begin{enumerate}[\indent (1)$^\mr{St}$]
	\item \label{enum.steinbergcoinv} For $n \geq 2$, $(\mr{St}(R^n) \otimes \bZ[\quot12])_{\mr{GL}(R^n)} = 0$.
	\item \label{enum.relsteinbergcoinv} For $n \geq 2$, $(\St(R^n \relo W) \otimes \bZ[\quot12])_{\mr{GL}(R^n,\text{ fix $W$})} = 0$.
\end{enumerate}

\noindent Without loss of generality we can change the basis of $R^n$ so that $W=R^w$ spanned by the first $w$ basis vectors. The proof of  \autoref{thm.steinbergcoinv} will be an upwards induction over $(n,w)$ in lexicographic order, where $w=0$ is to be interpreted as the absolute version.

\subsubsection{The base cases} The base cases are those with $n \leq 1$; then there is nothing to prove. 

\subsubsection{The first reduction} 

\begin{proposition}\label{prop:first-reduction-general} 
	The case $(n,0)$ is implied by the cases $\{(n,w) \, | \, w >0\}$.\end{proposition}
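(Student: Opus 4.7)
The first claim — that $\cS^{E_1}(\cdot,\cdot\mid R^n)$ is $(n-2)$-spherical — is Charney's theorem, recalled in the text before \autoref{prop:e1-splitting}, so the substantive content of the proposition is the coinvariant vanishing. I plan to establish the latter by a filtration argument using \autoref{thm:nerve-spherical}.

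The key input is the order-preserving poset map
\[f\colon \cS^{E_1}(\cdot,\cdot\mid R^n)\lra \cT(R^n)^{\mr{op}},\qquad (V_0,V_1)\longmapsto V_1,\]
to which I apply \autoref{thm:nerve-spherical} with weight function $t_\cY(V_1)\coloneqq\rk V_1-1$. Hypothesis (i) is the sphericity of $\cT(R^n)$ (fact (\ref{enum.titsspherical})), and hypothesis (iii) is the sphericity of $\cT(R^n)^{\mr{op}}_{>V_1}=\cT(R^n)_{<V_1}\cong\cT(V_1)$, again by Solomon--Tits. For hypothesis (ii), the preimage $f_{\leq V_1}$ coincides with $\cS^{E_1}(\cdot,V_1\subseteq\cdot\mid R^n)$, whose $(n-\rk V_1-1)$-sphericity is precisely the sphericity assertion of the \emph{hypothesised} case $(n,\rk V_1)$ with $\rk V_1\geq 1$.

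Having verified the hypotheses, \autoref{thm:nerve-spherical} produces a filtration of $\St^{E_1}(R^n)$ with associated graded
\[F_{-1}/F_0\cong\St(R^n),\qquad F_q/F_{q+1}\cong\bigoplus_{\substack{V_1\subseteq R^n\\\rk V_1=n-1-q}}\St(V_1)\otimes \St^{E_1}(R^n\relo V_1).\]
Tensoring with $\bZ[\quot12]$ and taking $\GL(R^n)$-coinvariants, right-exactness reduces the task to killing each graded piece. The top piece $\St(R^n)\otimes\bZ[\quot12]$ vanishes by Lee--Szczarba (fact (\ref{enum.steinbergcoinv})$^{\St}$). For the lower pieces, $\GL(R^n)$ acts transitively on rank-$(n-1-q)$ summands; Shapiro's lemma identifies the coinvariants with coinvariants over the setwise stabiliser $\GL(R^n,V_1)$. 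Since the pointwise stabiliser $\GL(R^n,\text{fix }V_1)$ acts trivially on $\St(V_1)$, its coinvariants yield $\St(V_1)\otimes(\St^{E_1}(R^n\relo V_1)\otimes\bZ[\quot12])_{\GL(R^n,\text{fix }V_1)}$, and the second tensor factor vanishes by the coinvariant part of the hypothesised case $(n,\rk V_1)$. Taking the further quotient by $\GL(V_1)$ preserves zero.

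The main obstacle I anticipate is purely bookkeeping: being careful with the sign and order conventions of \autoref{thm:nerve-spherical} when applied to the opposite poset $\cT(R^n)^{\mr{op}}$, and verifying that the identification of the associated graded pieces as $\bZ[\GL(R^n)]$-modules is compatible with the action of the stabilisers. Once these points are settled, the argument reduces mechanically to Charney's theorem, Solomon--Tits, Lee--Szczarba, and the hypothesised relative cases $(n,w)$ with $w>0$.
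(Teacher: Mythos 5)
Your proof is correct and follows essentially the same route as the paper: the same poset map $(V_0,V_1)\mapsto V_1$ to $\cT(R^n)^{\mr{op}}$, the same weight function $t_\cY = \rk(-)-1$, the same application of \autoref{thm:nerve-spherical} to produce the filtration, and the same disposal of the graded pieces via Lee--Szczarba for the top piece and Shapiro's lemma plus the inductive hypothesis on the relative cases $(n,w)$ with $w>0$ for the rest.
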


\begin{proof}
	Consider the map of posets
	\begin{align*} \cX \coloneqq \cS^{E_1}(\cdot,\cdot|R^n) &\overset{f}\lra \cY \coloneqq \cT(R^n)^\mr{op} \\
	(A,B) &\longmapsto B,
	\end{align*}
	and for $U \in \cY$ let $t_\cY(U) = \dim(U)-1$. We verify the assumptions of  \autoref{thm:nerve-spherical}: 
	\begin{enumerate}[(i)]
		\item $\cY = \cT(R^n)^\mr{op}$ is $(n-2)$-spherical by (\ref{enum.titsspherical}).
		\item $\cY_{>U} = \cT(U)^\mr{op}$ is $(\dim(U)-2) = (t_\cY(U)-1)$-spherical by (\ref{enum.titsspherical}).
		\item $\cX_{f \leq U}=\cS^{E_1}(\cdot,U\subseteq \cdot|R^n)$. By the inductive hypothesis this is $(n-\dim(U)-1) = (n-2-t_\cY(U))$-spherical. 
	\end{enumerate}
	 \autoref{thm:nerve-spherical} then implies that $\cX=\cS^{E_1}(\cdot,\cdot|R^n)$ is $(n-2)$-spherical, and that there is a filtration of the $\bZ[\quot12][\mr{GL}(R^n)]$-module $\mr{St}^{E_1}(R^n) \otimes \bZ[\quot12]$ with $F_{-1}/F_0 = \mr{St}(R^n) \otimes \bZ[\quot12]$ and the other filtration quotients $F_q/F_{q+1}$ given by
	\begin{align*}
	&\bigoplus_{\substack{0 < U < R^n\\ \dim(U) = n-q-1}} \mr{St}(U) \otimes \widetilde{H}_{q}(S^{E_1}(\cdot,U \subseteq \cdot|R^n);\Z) \otimes \Z[\quot12]\\
	&\qquad\cong \mr{Ind}^{\mr{GL}(R^n)}_{\mr{GL}(R^n,\text{ pres $R^{n-q-1}$})} \left(\mr{St}(R^{n-q-1}) \otimes \St^{E_1}(R^n \relo R^{n-q-1}) \otimes \bZ[\quot12]\right),
	\end{align*}
	where $R^{n-q-1} \subseteq R^n$ is spanned by the first $(n-q-1)$ basis vectors and $\mr{GL}(R^n,\text{ pres $R^{n-q-1}$})$ is the subgroup of $\GL_n(R)$ that preserves $R^{n-q-1}$ as a subspace. It then suffices to prove that the covariants of each of these filtration quotients vanish.
	
	Firstly, for $q=-1$ we take $\mr{GL}(R^n)$-coinvariants of $\mr{St}(R^n) \otimes \bZ[\quot12]$ which vanish for $n \geq 2$ by (\ref{enum.steinbergcoinv})$^\mr{St}$. Secondly, for $q\geq 0$ we can use Shapiro's lemma to identify the $\mr{GL}(R^n)$-coinvariants of the induced module with the $\mr{GL}(R^n,\text{ pres $R^{n-q-1}$})$-coinvariants of 
	\[\mr{St}(R^{n-q-1}) \otimes \St^{E_1}(R^n \relo R^{n-q-1}) \otimes \bZ[\quot12].\]
	These vanish if the $\mr{GL}(R^n, \text{ fix $R^{n-q-1}$})$-coinvariants vanish, which follows from the inductive hypothesis as $n \geq 2$.
\end{proof}

\subsubsection{Cutting down} The following generalises an argument due to Charney \cite[p.\ 4]{Charney}. Given summands $V$ and $W$ of $P$, let $\cS^{E_1}(\cdot \subseteq V,W \subseteq \cdot|P)$ be the subposet of $\cS^{E_1}(\cdot,\cdot|P)$ consisting of those splittings $(A,B)$ such that $A \subseteq V$ and $W \subseteq B$.

\begin{lemma}\label{lem:cutting-down} Let $R$ be a PID. Let $V$ and $W$ be summands of $P$ such that $V \cap W = 0$ and $V+W$ is a proper summand. Let $C$ be a complement to $W$ in $P$ containing $V$. Then there is an isomorphism
	\[\cS^{E_1}(\cdot \subseteq V,W \subseteq \cdot|P) \cong \cS^{E_1}(\cdot \subseteq V,\cdot|C),\]
equivariantly for the subgroup of $\mr{GL}(P)$ preserving the summands $V$, $W$ and $C$.\end{lemma}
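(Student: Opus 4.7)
The plan is to exhibit mutually inverse, order-preserving bijections equivariant for the subgroup of $\mr{GL}(P)$ preserving $V$, $W$, and $C$. Define
\[\phi \colon (A, B) \longmapsto (A, B \cap C) \qquad \text{and} \qquad \psi \colon (A, B') \longmapsto (A, B' \oplus W)\]
in the forward and backward directions, respectively. The motivation is that once $W \subseteq B$, the decomposition $P = C \oplus W$ splits $B$ into a piece inside $C$ and the piece $W$, while the data of $A$ already lives inside $V \subseteq C$.

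The crucial identity underlying everything is
\[B = (B \cap C) \oplus W,\]
valid for any $(A, B)$ on the left. To prove it, $W \subseteq B$ and $P = C \oplus W$ imply that any $b \in B$ decomposes as $b = c + w$ with $c \in C$ and $w \in W \subseteq B$, whence $c = b - w \in B \cap C$; the sum is direct since $C \cap W = 0$. Combined with $A \subseteq V \subseteq C$ and $P = A \oplus B$, restricting to $C$ yields $C = A \oplus (B \cap C)$, so $\phi$ lands in the desired poset as soon as $B \cap C \ne 0$. This non-triviality is the one place where the hypothesis that $V + W$ is a proper summand of $P$ is used: were $B \cap C = 0$, then $B = W$, giving $A \oplus W = P = C \oplus W$; projecting along $W$ forces $A = C$, hence $V = C$, and so $V + W = P$, a contradiction. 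For $\psi$, well-definedness is immediate from $A \oplus B' \oplus W = C \oplus W = P$, together with $B' \oplus W \supseteq W$ being non-zero and $B' \oplus W \ne P$ since $A \ne 0$.

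Mutual inverseness then follows from $B = (B \cap C) \oplus W$ in one direction and from $(B' \oplus W) \cap C = B'$ (using $B' \subseteq C$ and $W \cap C = 0$) in the other. Order-preservation is automatic since intersection with $C$ and direct sum with $W$ are monotone in the inclusion/reverse-inclusion order on the two coordinates, and equivariance is clear because $\phi$ and $\psi$ are defined using only the summands $C$ and $W$ together with intersection and addition, all of which commute with any automorphism preserving $V$, $W$, and $C$. I expect no serious obstacle beyond the non-triviality bookkeeping in the middle paragraph, which is precisely the place where the properness of $V + W$ enters.
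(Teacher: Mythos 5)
Your proof is correct and follows the same approach as the paper: both directions are given by $(A,B)\mapsto(A,B\cap C)$ and $(A',B')\mapsto(A',B'\oplus W)$, and the only substantive check is that $B\cap C\neq 0$, which both arguments deduce from $V+W$ being a proper summand. You simply fill in the verification (the identity $B=(B\cap C)\oplus W$, mutual inverseness, order-preservation, and equivariance) in more detail than the paper's terse one-sentence justification.
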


\begin{proof}Mutually inverse functors are given by
	\begin{align*}\cS^{E_1}(\cdot \subseteq V,W \subseteq \cdot|P) & \overset{\cong}{\longleftrightarrow} \cS^{E_1}(\cdot \subseteq V,\cdot|C) \\
	(A,B) & \longmapsto (A,B \cap C) \\
	(A',B' \oplus W) & \longmapsfrom (A',B') \end{align*} 
	This is well-defined, as it cannot happen that $B \cap C = \{0\}$, because $V \oplus W \neq P$ implies that $B \cap C$ has dimension at least $1$.\end{proof}

\subsubsection{The second reduction} 

\begin{proposition}\label{prop:second-reduction-general} 
	The case $(n,w)$ is implied by the cases $\{(n', w')\, | \, n' < n\}$.
\end{proposition}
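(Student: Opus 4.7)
The plan is to apply \autoref{thm:nerve-spherical} to the order-preserving map
\[f\colon \cX \coloneqq \cS^{E_1}(\cdot, W \subseteq \cdot \mid R^n) \lra \cY \coloneqq \cT(R^n \relo W), \qquad (A, B) \longmapsto A,\]
with weight function $t_\cY(U) \coloneqq n - \rk W - \rk U$. Condition~(i) of \autoref{thm:nerve-spherical} is exactly fact (\ref{enum.reltitsspherical}); for condition~(iii), the quotient $R^n \twoheadrightarrow R^n/U$ identifies $\cY_{>U}$ with $\cT(R^{n - \rk U} \relo R^{\rk W})$ (using $U \cap W = 0$, so that $(W + U)/U \cong W$ has rank $\rk W$), which by (\ref{enum.reltitsspherical}) is $(t_\cY(U) - 1)$-spherical.

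The heart of the proof is verifying condition~(ii): that the fibre $f_{\leq U} = \cS^{E_1}(\cdot \subseteq U, W \subseteq \cdot \mid R^n)$ is $(\rk U - 1)$-spherical. Here I would split into two cases. When $\rk U = n - \rk W$, so that $U$ is a complement of $W$ in $R^n$, the pair $(U, W)$ is a maximum element of $f_{\leq U}$, which is therefore contractible; a chain-length count (the first coordinate strictly increases in $U$) bounds its dimension by $\rk U - 1$, giving the required sphericity. When $U + W$ is a proper summand, \autoref{lem:cutting-down} identifies $f_{\leq U} \cong \cS^{E_1}(\cdot \subseteq U, \cdot \mid C)$ with $C$ a complement of $W$ in $R^n$ containing $U$, now living inside a module of rank $n - \rk W < n$; its $(\rk U - 1)$-sphericity is then established by a subsidiary map-of-posets argument inside $C$, for instance applying \autoref{thm:nerve-spherical} to the map $(A, B) \mapsto A$ valued in $\cT(U) \cup \{U\}$ together with the absolute Tits-building sphericity and an induction on $\rk U$.

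Having verified the hypotheses of \autoref{thm:nerve-spherical}, $\cX$ is $(n - \rk W - 1)$-spherical and $\St^{E_1}(R^n \relo W) \otimes \bZ[\quot 12]$ admits a filtration. Its bottom quotient is $\St(R^n \relo W) \otimes \bZ[\quot 12]$, whose $\GL(R^n, \text{fix } W)$-coinvariants vanish by (\ref{enum.relsteinbergcoinv})$^{\St}$ for $n \geq 2$. For each higher quotient, Shapiro's lemma reduces the coinvariant calculation, over the stabiliser of a representative summand $U_0$ of rank $q+1$, to that of the tensor
\[\St(R^{n - \rk U_0} \relo R^{\rk W}) \otimes \widetilde H_{\rk U_0 - 1}(f_{\leq U_0}) \otimes \bZ[\quot 12].\]
Mirroring the coinvariant argument in the proof of \autoref{prop:first-reduction-general}, I would pass first to the normal subgroup of elements acting as the identity on $R^n/U_0$---which acts trivially on the Steinberg factor---so that the further coinvariants by $\GL(R^{n - \rk U_0}, \text{fix } R^{\rk W})$ kill the Steinberg factor by the inductive hypothesis at rank $n - \rk U_0 < n$; right-exactness and normality then produce the required vanishing of the full $\GL(R^n, \text{fix } W)$-coinvariants.

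The main obstacle will be verifying the $(\rk U - 1)$-sphericity of $\cS^{E_1}(\cdot \subseteq U, \cdot \mid C)$ in the proper-summand case: because the poset has the same flavour as the original object $\cX$, the subsidiary map-of-posets argument must be orchestrated carefully so that the induction does not circle back on the statement being proved.
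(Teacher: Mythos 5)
Your overall framework — the map of posets $f\colon \cS^{E_1}(\cdot, R^w \subseteq \cdot \mid R^n) \to \cT(R^n \relo R^w)$, $(A,B)\mapsto A$, with weight $t_\cY(V) = n - w - \rk V$, verified against the hypotheses of \autoref{thm:nerve-spherical} — is exactly the paper's, and your treatment of conditions (i) and (iii), the contractible case $V\oplus R^w = R^n$, and the bottom filtration quotient $\St(R^n\relo R^w)$ all match. But the two places where you waved your hands are precisely where the paper's argument has a trick you did not find, and that trick — \emph{dualising} — is the real content of the proof.

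First, for the sphericity of $f_{\leq V} = \cS^{E_1}(\cdot\subseteq V, R^w\subseteq\cdot\mid R^n)$ when $V\oplus R^w$ is a proper summand: after cutting down to $\cS^{E_1}(\cdot\subseteq V, \cdot\mid C)$ via \autoref{lem:cutting-down}, the paper does \emph{not} run a subsidiary map-of-posets argument as you suggest. Instead it applies the linear duality $C\mapsto C^\vee$, under which a splitting $(A,B)$ with $A\subseteq V$ becomes a splitting $(B^\circ, A^\circ)$ of $C^\vee$ with $V^\circ\subseteq A^\circ$. This identifies $\cS^{E_1}(\cdot\subseteq V,\cdot\mid C)$ with $\cS^{E_1}(\cdot, V^\circ\subseteq\cdot\mid C^\vee)$, which is \emph{exactly} the poset covered by the inductive hypothesis at the smaller rank $\rk C < n$. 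You correctly flagged the danger that a subsidiary argument could ``circle back on the statement being proved''; the dualisation is what breaks the circle, by turning a constraint on the first summand into a constraint on the second.

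Second, and more seriously, your coinvariant argument for the higher filtration quotients contains a genuine error. You propose to take $K$-coinvariants first (where $K$ is the subgroup acting trivially on $R^n/V$), note that $K$ acts trivially on the Steinberg factor $M\coloneqq\St(R^n/V\relo R^w)$, and then kill the resulting tensor by taking $Q$-coinvariants (with $Q = \GL(R^n/V,\text{fix }R^w)$) ``because $M_Q = 0$ by induction.'' But the vanishing of $M_Q$ does \emph{not} imply the vanishing of $(M\otimes N_K)_Q$ for a general $\bZ[\quot12][Q]$-module $N_K$: for instance take $Q = \Z/2$ and $M = N_K = \Z[\quot12]^{-}$ the sign module, so $M_Q = 0$ but $(M\otimes N_K)_Q = \Z[\quot12]\neq 0$. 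The paper kills the \emph{other} factor instead: it shows that $N_K = 0$, after which $(M\otimes N)_G = M\otimes N_K$ vanishes automatically since $K$ acts trivially on $M$. To prove $N_K = 0$, it uses \autoref{lem:cutting-down} to transport $N$ to a $K$-module supported on $C$, identifies $K\cong\GL(C,\text{pres }V,\text{fix }C/V)$, and then dualises again so that the coinvariants become $\GL(C^\vee,\text{fix }V^\circ)$-coinvariants of $\St^{E_1}(C^\vee\relo V^\circ)$, which vanish by the inductive hypothesis at rank $\rk C < n$. So the same duality device resolves both of your gaps; without it, neither step goes through.

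(As a minor point: the map-of-posets argument silently needs $w\leq n-2$; the paper disposes of the boundary case $w=n-1$ at the start by observing that $\cS^{E_1}(\cdot,R^{n-1}\subseteq\cdot\mid R^n)$ is already isomorphic to $\cT(R^n\relo R^{n-1})$. Your proposal does not address this.)
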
  

\begin{proof} 
	When $w = n-1$, $\cS^{E_1}(\cdot,R^w \subseteq \cdot|R^n)$ is isomorphic to $\cT(R^n \relo R^w)$ and we already know the result. So let us assume that $w \leq n-2$. 
	
	Consider the map  of posets
	\begin{align*}\cX \coloneqq \cS^{E_1}(\cdot,R^w \subseteq \cdot|R^n) &\overset{f}\lra \cY \coloneqq \cT(R^n \relo R^w) \\
	(A,B) &\longmapsto A, \end{align*}
	and take $t_\cY(V) = n-\rk V-w$. We verify the assumptions of  \autoref{thm:nerve-spherical}:
	\begin{enumerate}[(i)]
		\item $\cY = \cT(R^n \relo R^w)$ is $(n-w-1)$-spherical by (\ref{enum.reltitsspherical}).
		\item $\cY_{>V}$ is isomorphic to $\cT(R^n/V \relo R^w)$, which is $(n - \rk V - w - 1) = (t_\cY(V)-1)$-spherical by (\ref{enum.reltitsspherical}). 
		\item $\cX_{f \leq V}$ is $\cS^{E_1}(\cdot \subseteq V,R^w \subseteq \cdot|R^n)$, which we claim is $(\rk V-1)= (n-w-1-t_\cY(V))$-spherical.
		
		There are two cases: the first case is that $V \oplus R^w = R^n$, and then the complex is contractible (the splitting $(V,R^w)$ is terminal) and hence is a wedge of no $(n-w-1-t_\cY(V))$-spheres. The second case is that $V \oplus R^w \neq R^n$. Taking a complement $C$ to $R^w$ which contains $V$, we can apply the cutting down \autoref{lem:cutting-down} to see it is isomorphic to $\cS^{E_1}(\cdot \subseteq V,\cdot|C)$. Letting $C^\vee$ denote $\mr{Hom}(C,R)$, dualising shows this is isomorphic to $\cS^{E_1}(\cdot,V^\circ \subseteq \cdot|C^\vee)$, where $V^\circ = \{ f \in C^\vee \mid f(v) = 0 \text{ for all }v\in V\}$ is the annihilator. By induction this is $(n-w-(n-w-\rk V)-1)=(\rk (V)-1)$-spherical.
	\end{enumerate} 
	 \autoref{thm:nerve-spherical} then implies that $\cX = \cS^{E_1}(\cdot,R^w \subseteq \cdot|R^n)$ is $(n-w-1)$-spherical, and gives a filtration of the $\Z[\quot12][\mr{GL}(R^n, \text{ fix }R^w)]$-module $\St^{E_1}(R^n \relo R^w) \otimes \bZ[\quot12]$ with $F_{-1}/F_0 = \St(R^n \relo R^w) \otimes \bZ[\quot12]$  and further filtration quotients $F_q/F_{q+1}$ given by
	\begin{align*}&\bigoplus_{\substack{0 < V < R^n \\ V \cap R^w = \{0\},\\
	\text{$V+R^w$ a summand} \\ \rk V=q-1}} \St(R^n/V \relo R^w) \otimes \widetilde{H}_{q-2}(\cS^{E_1}(\cdot \subseteq V,R^w \subseteq \cdot|R^n)) \otimes \bZ[\quot12] \\
	&\qquad \cong \mr{Ind}^{\mr{GL}(R^n,\text{ fix $R^w$})}_{\mr{GL}(R^n,\text{ fix $R^w$, pres $V$})}
	 \begin{pmatrix}
	\St^{E_1}(R^n/V \relo R^w) \otimes
	\\
	\widetilde{H}_{q-2}(\cS^{E_1}(\cdot \subseteq V,R^w \subseteq \cdot|R^n);\Z) \otimes \bZ[\quot12]
	\end{pmatrix}
	\end{align*} 
	for $V$ a choice of $(q-1)$-dimensional summand complementary to $R^w$ (for example, the span of the last $q-1$ basis vectors). As before, it suffices to prove that $\mr{GL}(R^n,\text{ fix $R^w$})$-coinvariants of each of these filtration quotients vanish.
	
	The filtration quotient $F_{-1}/F_0 = \St^{E_1}(R^n \relo R^w) \otimes \bZ[\quot12]$ has trivial $\mr{GL}(R^m,\text{ fix $R^w$})$-coinvariants by (\ref{enum.relsteinbergcoinv})$^\mr{St}$. For the remaining filtration quotients, we may assume that $V \oplus R^w \neq R^n$ (otherwise the complex is contractible). By Shapiro's lemma the coinvariants are equal to the $\mr{GL}(R^n, \text{ fix $R^w$, pres $V$})$-coinvariants of
	\[ \St^{E_1}(R^n/V \relo R^w) \otimes \widetilde{H}_{q-2}(\cS^{E_1}(\cdot \subseteq V,R^w \subseteq \cdot|R^n);\Z) \otimes \bZ[\quot12].\] 
	To show that this vanishes in a range of degrees, we define a group $K$ by the extension
	\[0 \lra K \lra \mr{GL}(R^n, \text{fix $R^w$, pres $V$}) \lra \mr{GL}(R^n/V, \text{fix $R^w$}) \lra 0,\]
	that is, $K=\mr{GL}(R^n, \text{fix $R^w$, pres $V$, fix $R^n/V$})$. This acts trivially on $\St^{E_1}(R^n/V \relo R^w)$, so it suffices to show the vanishing of the $K$-coinvariants of
	\[\widetilde{H}_{\dim V-1}(\cS^{E_1}(\cdot \subseteq V,R^w \subseteq \cdot|R^n);\Z)\otimes \bZ[\quot12].\]
	
\noindent If $C$ is a complement to $R^w$ which contains $V$, then each element of $K$ has to preserve $C$ because $K$ acts as the identity on $R^n/V$. Thus  \autoref{lem:cutting-down} gives a $\Z[\quot12][K]$-module isomorphism
	\[\widetilde{H}_{\dim V-1}(\cS^{E_1}(\cdot \subseteq V,R^w \subseteq \cdot|R^n);\Z) \otimes \Z[\quot12] \cong \widetilde{H}_{\dim V-1}(\cS^{E_1}(\cdot \subseteq V,\cdot|C);\Z) \otimes \Z[\quot12].\]
	Furthermore the natural map $K \to \mr{GL}(C)$ is an isomorphism onto its image, given by the subgroup $\mr{GL}(C, \text{ pres $V$, fix $C/V$})$. Thus we are reduced to showing that the $\mr{GL}(C, \text{pres $V$, fix $C/V$})$-coinvariants of $\mr{St}^{E_1}(\cdot \subseteq V,\cdot|C)) \otimes \Z[\quot12]$ vanish. Dualising identifies this with the $\mr{GL}(C^\vee, \text{fix $V^\circ$})$-coinvariants of $\St^{E_1}(C^\vee \relo V^\circ) \otimes \Z[\quot12]$ which vanish in the required range by induction as $2 \leq \rk C < n$ (the lower bound follows as the annihilator $V^\circ \neq 0$ and $V \oplus R^w \neq R^n$).
\end{proof}

\begin{proposition} 
If $R$ a Euclidean domain then $(\St^{E_1}(R^2) \otimes \bZ[\quot12])_{\GL(R^2)} = 0 $.
\label{S2}
\end{proposition}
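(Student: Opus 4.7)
The plan is to proceed in three short steps: (i) identify $S^{E_1}(R^2)$ explicitly, (ii) reduce the coinvariant computation to an assertion about $\GL_2(R)^{\mr{ab}}$, and (iii) verify that assertion by an elementary 2-torsion calculation.

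First, a $p$-simplex of $S^{E_1}_\bullet(R^2)$ is a decomposition of $R^2$ into $p+2$ non-zero proper summands, and since each such summand has rank at least one, only $p=0$ is possible. Thus $S^{E_1}(R^2)$ is the discrete set $X$ of ordered pairs $(L_0, L_1)$ of complementary lines in $R^2$, and $\mr{St}^{E_1}(R^2) = \widetilde H_0(X)$ is the augmentation ideal of $\Z[X]$.

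Next, $\GL_2(R)$ acts transitively on $X$, with stabilizer of the basepoint $(\langle \vec e_1\rangle, \langle \vec e_2 \rangle)$ equal to the diagonal subgroup $H = \{\mr{diag}(u,v) : u,v \in R^\times\} \cong R^\times \times R^\times$. Applying $H_*(\GL_2(R);-)$ to the short exact sequence $0 \to \widetilde H_0(X) \to \Z[X] \to \Z \to 0$ and invoking Shapiro's lemma $H_*(\GL_2(R); \Z[X]) \cong H_*(H;\Z)$ produces the right exact sequence
\[H^{\mr{ab}} \lra \GL_2(R)^{\mr{ab}} \lra \widetilde H_0(X)_{\GL_2(R)} \lra 0,\]
with the first arrow induced by inclusion. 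It therefore suffices to show that $\GL_2(R)^{\mr{ab}} \otimes \Z[\quot 12]$ is generated by the image of $H$.

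Since $R$ is Euclidean, row reduction shows that $\GL_2(R)$ is generated by $H$ together with the elementary matrices $E_{ij}(r)$. The remaining step is to verify that each $E_{ij}(r)$ is $2$-torsion in $\GL_2(R)^{\mr{ab}}$: conjugation by $\mr{diag}(-1,1) \in H$ gives
\[\mr{diag}(-1,1)\, E_{12}(r)\, \mr{diag}(-1,1)^{-1} = E_{12}(-r) = E_{12}(r)^{-1},\]
so in the abelianization $[E_{12}(r)] = [E_{12}(r)^{-1}] = -[E_{12}(r)]$; the case of $E_{21}(r)$ is analogous using $\mr{diag}(1,-1)$. Inverting $2$ annihilates these classes. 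I expect the only subtle point is the cokernel identification in the second step; once that is in hand, the $2$-torsion calculation is immediate.
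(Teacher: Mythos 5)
Your proof is correct, and it takes a genuinely different and more elementary route than the paper. The paper proves this proposition by re-running the full inductive machinery of \autoref{thm.steinbergcoinv} (maps of posets, Tits buildings, the cutting-down lemma), with \autoref{BPID} --- a direct calculation with $\widetilde H_0(B_1^1)$ --- substituted for \autoref{thm:top-partial} as the base input. You instead work directly with the splitting complex: you observe that $S^{E_1}(R^2)$ is discrete (since $R^2$ has only one nontrivial splitting shape), identify $\mr{St}^{E_1}(R^2)$ as the augmentation ideal of $\Z[X]$ for the transitive $\GL_2(R)$-set $X$ of ordered pairs of complementary lines, and then use Shapiro's lemma on the long exact sequence of $0 \to \widetilde H_0(X) \to \Z[X]\to \Z \to 0$ to identify $\widetilde H_0(X)_{\GL_2(R)}$ with $\coker\bigl(H^{\mr{ab}} \to \GL_2(R)^{\mr{ab}}\bigr)$. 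This cokernel is killed after inverting $2$ because the Euclidean hypothesis gives generation of $\GL_2(R)$ by $H$ together with elementary matrices, and each elementary matrix is $2$-torsion in $\GL_2(R)^{\mr{ab}}$ by the conjugation trick $\mr{diag}(-1,1)E_{12}(r)\mr{diag}(-1,1)^{-1} = E_{12}(r)^{-1}$. What your argument buys is a short, self-contained proof of the $n=2$ case that bypasses the frame-complex and Tits-building apparatus entirely; the price is that it does not generalize to higher rank (where the splitting complex is no longer discrete and the inductive scaffolding really is needed), whereas the paper's phrasing makes the $n=2$ case a clean specialisation of a uniform argument. The key structural observation --- that for $n=2$ the $E_1$-Steinberg coinvariants are literally the cokernel of the map on abelianizations induced by the diagonal torus --- is a nice and worthwhile simplification.
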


\begin{proof}
The proof is identical to the proof of \autoref{thm.steinbergcoinv} using \autoref{BPID} instead of \autoref{thm:top-partial}.
\end{proof}

\section{Proofs of \autoref{thm:integers} and \autoref{thm:Euclidean}} In this section we prove \autoref{thm:integers} and \autoref{thm:Euclidean} by combining a general homological stability result with the results of the previous section.

\subsection{A criterion for homological stability of slope $\quot{m}{m+1}$ after inverting $2$} We will use a general homological stability result for $E_k$-algebras. Fix a commutative ring $\bk$ and let $\gA \in \cat{Alg}_{E_k}(\cat{sMod}_{\bk}^\bN)$ be a non-unital $E_k$-algebra. We can unitalise and rectify it to obtain a unital associative algebra denoted $\overline{\gA} \in \smash{\cat{Alg}_{E_k^+}(\cat{sMod}_\bk^\bN)}$ \cite[Section 12.2.1]{e2cellsI}. Fixing a representative $\sigma \colon S^{1,0}_\bk \to \gA$ of $\sigma \in H_{1,0}(\gA)$, we define the quotient $\overline{\gA}$-module $\overline{\gA}/\sigma$ as the homotopy cofiber of
\[S^{1,0}_\bk \otimes \overline{\gA} \xrightarrow{\sigma \otimes \mr{id}} \overline{\gA} \otimes \overline{\gA} \overset{\mu}\lra \overline{\gA}\]
with $\mu$ the multiplication of $\overline{\gA}$. Its homology fits into a long exact sequence
\[\cdots \lra H_{g-1,d}(\overline{\gA}) \xrightarrow{\sigma \cdot -} H_{g,d}(\overline{\gA}) \lra H_{g,d}(\overline{\gA}/\sigma) \lra H_{g-1,d-1}(\overline{\gA}) \lra \cdots,\]
so that homological stability for $\gA$ is equivalent to a vanishing result for $H_{*,*}(\overline{\gA}/\sigma)$.

\begin{proposition} \label{E3}
Assume $\bk$ is a commutative ring with $2$ a unit in $\bk$, and $\gA \in \cat{Alg}_{E_k}(\cat{sMod}^\bN_\bk)$ with $k \geq 3$ and $H_{*,0}(\overline{\gA}) \cong \bk[\sigma]$ for $\sigma \in H_{1,0}(\gA)$. If $\smash{H^{E_1}_{n,d}}(\gA)=0$ for $d<n-1$, and 
\[H^{E_1}_{2,1}(\gA)=H^{E_1}_{3,2}(\gA)= \dots = H^{E_1}_{m+1,m}(\gA)=0,\]
then $H_{n,d}(\overline{\gA}/\sigma) = 0$ for $d \leq \quot{m}{m+1}(n-1)$. In particular, if $H^{E_1}_{n,d}(\gA)=0$ for $d<n$, then $H_{n,d}(\overline{\gA}/\sigma) = 0$ for $d < n$.
\end{proposition}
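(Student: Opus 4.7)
The plan is to apply the $E_k$-cellular homological stability machinery from \cite[Section 18.2]{e2cellsI}: use the vanishing of $H^{E_1}(\gA)$ to build a sparse $E_k$-cell approximation of $\gA$, and deduce from this a vanishing bound on $H_{*,*}(\overline{\gA}/\sigma)$.

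First, I would invoke the standard relationship between $E_1$- and $E_k$-homology to conclude that the stated vanishing of $H^{E_1}(\gA)$ also gives the same vanishing of $H^{E_k}(\gA)$: the operadic operations relating the two shift bidegree non-negatively, so the low-bidegree vanishing of $H^{E_1}$ passes to $H^{E_k}$. Hence we may fix a cofibrant CW $E_k$-algebra $\gA^{\mr{c}} \simeq \gA$ whose positive-bidegree cells consist of a single $(1,0)$-cell realising $\sigma$, together with further cells $(n_\alpha, d_\alpha)$ satisfying $d_\alpha \geq n_\alpha$ for $2 \leq n_\alpha \leq m+1$ and $d_\alpha \geq n_\alpha - 1$ for $n_\alpha \geq m+2$.

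Second, one analyses $\overline{\gA^{\mr{c}}}/\sigma$ using the cell structure inherited from $\gA^{\mr{c}}$. The cell corresponding to $\sigma$ is killed by construction, so the remaining contributions to $H_{*,*}(\overline{\gA^{\mr{c}}}/\sigma)$ come from iterated $E_k$-operations (multiplications, Browder brackets, Dyer--Lashof operations) applied to the other cells; the hypotheses $k \geq 3$ and $2 \in \bk^\times$ ensure that all such operations shift $d$ non-negatively. Consequently every contribution lies in a bidegree $(n,d) = (\sum_i n_{\alpha_i}, \sum_i d_{\alpha_i} + \varepsilon)$ with $\varepsilon \geq 0$ and the $(n_{\alpha_i}, d_{\alpha_i})$ cells as above. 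The desired vanishing $d > m(n-1)/(m+1)$ then reduces to a short numerical check: for each individual cell constraint the ``slack'' $d_{\alpha_i} - m(n_{\alpha_i}-1)/(m+1)$ is at least $1$, and the identity $d - m(n-1)/(m+1) = \sum_i \mr{slack}_i - (j-1)m/(m+1)$ for a product of $j$ cells yields the lower bound $j/(m+1) + m/(m+1) \geq 1$, which is strictly positive.

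The main obstacle is the precise identification of the spectral sequence computing $H_{*,*}(\overline{\gA}/\sigma)$ from the $E_k$-cell structure of $\gA^{\mr{c}}$ (via the unitalisation $\gA^{\mr{c}} \mapsto \overline{\gA^{\mr{c}}}$ and the cofiber construction by $\sigma$), and the correct invocation of the general $E_k$-algebra machinery; both are technical but standard in \cite[Section 18.2]{e2cellsI}. The final sentence of the proposition (slope $1$ result) then follows by letting $m \to \infty$, or equivalently by observing that the cell constraint becomes $d \geq n$ uniformly, giving slack at least $1$ everywhere.
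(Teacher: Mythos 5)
Your proposal captures the paper's overall architecture—CW approximation of $\gA$ with $E_k$-cells in the bidegrees dictated by $H^{E_1}$-vanishing, followed by a cell-attachment spectral sequence for $\overline{\gA}/\sigma$ and a slope count on the resulting free $W_{k-1}$-algebra—but there is a genuine gap in how you handle the cell $\sigma$ itself.

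You write that, after killing $\sigma$, ``the remaining contributions \ldots come from iterated $E_k$-operations \ldots applied to the other cells,'' and your slope identity assigns each contributing cell a slack $d_{\alpha_i} - \tfrac{m}{m+1}(n_{\alpha_i}-1) \geq 1$. But the quotient $\overline{\gA^{\mathrm{c}}}/\sigma$ only kills the \emph{module action} of $\sigma$, i.e.\ products with $\sigma$. Dyer--Lashof operations $Q^s\sigma$, $\beta Q^s\sigma$ and Browder brackets such as $[\sigma,\sigma]$, together with their iterates, are \emph{not} $\sigma$-multiples and survive into $E^1 = W_{k-1}(\,\cdots\,)/\sigma$, yet they do not fit your decomposition $(n,d)=(\sum_i n_{\alpha_i},\sum_i d_{\alpha_i}+\varepsilon)$ over non-$\sigma$ cells: $[\sigma,\sigma]$ has $n=2$ but an empty sum over non-$\sigma$ cells. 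Including $\sigma$ in the count does not help either, since its slack is $0 - \tfrac{m}{m+1}\cdot 0 = 0$, not $\geq 1$, so your numerical bound collapses.

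This is exactly where the hypotheses $k\geq 3$ and $2\in\bk^\times$ do their real work, and your gloss ``all such operations shift $d$ non-negatively'' misses it. Shifting $d$ non-negatively is true even for $k=2$ or $p=2$, but then $[\sigma,\sigma]$ (for $k=2$) and $Q^1\sigma$ (for $p=2$) land in bidegree $(2,1)$, slope $\tfrac12$, which ruins the claimed slope-$\tfrac{m}{m+1}$ vanishing as soon as $m\geq 2$. For $k\geq 3$ the bracket $[\sigma,\sigma]$ has bidegree $(2,k-1)$, slope $\geq 1$; for odd $p$ (which is where $2\in\bk^\times$ enters, after the universal-coefficient reduction to $\F_p$ and $\Q$ that the paper performs and your proposal omits) the lowest-slope class in $\sigma$ is $\beta Q^1\sigma$ in bidegree $(p,2p-3)$, again slope $\geq 1$. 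One needs to prove that every admissible Dyer--Lashof/Browder monomial in $\sigma$ alone has slope $\geq 1$, and then combine this observation with your slack count for the remaining cells; as written your proof neither states this nor allows it to be extracted from the slack bookkeeping.
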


\begin{proof} By the universal coefficient theorem, it suffices to prove the result for $\bk=\F_p$ with $p$ odd or $\bk=\Q$, c.f.~Reduction 2 and 3 of \cite[Theorem 18.2]{e2cellsI}. We will describe the case $\bk=\F_p$ as the case $\bk=\Q$ is similar and easier. 
 
Using bar spectral sequences as in \cite[Theorem 14.2.1]{e2cellsI}, the $E_k$-indecomposables satisfy the same vanishing range as the $E_1$-indecomposables: 
	\[H^{E_k}_{n,d}(\gA)  =0 \text{ for $d<n-1$ and } H^{E_k}_{2,1}(\gA)=H^{E_k}_{3,2}(\gA)= \ldots = H^{E_k}_{m+1,m}(\gA)  =0.\]
 
A map $\sigma \colon S^{1,0}_{\bF_p} \to \gA$ representing a generator $\sigma$ of $H_{1,0}(\gA;\bF_p)$ yields a map
	\[f \colon \gE_k(S^{1,0}_{\F_p}) \lra \gA\]
of $E_k$-algebras. Consider now the long exact sequence of $E_k$-homology
\[\cdots \lra H^{E_k}_{n,d+1}(f) \lra H^{E_k}_{n,d}(\gE_k(S^{1,0}_{\F_p})) \lra H^{E_k}_{n,d}(\gA) \lra H^{E_k}_{n,d}(f) \lra \cdots.\]
By construction $\smash{H^{E_k}_{n,d}(\gE_k(S^{1,0}_{\F_p}))}$ vanishes unless $(n,d) = (1,0)$, in which case it is $\bF_p$. As $f_*\colon H_{1,0}(S^{1,0}_{\bF_p}) \m H_{1,0}(\gA)$ is surjective, we obtain that from the above long exact sequence and the Hurewicz theorem of \cite[Corollary 11.12]{e2cellsI} that
\[H^{E_k}_{n,d}(f)  =0 \text{ for $d<n-1$ and } H^{E_k}_{1,0}(f)=H^{E_k}_{2,1}(f)= \ldots = H^{E_k}_{m+1,m}(f)  =0.\]

We now apply the CW approximation result for $\cat{Alg}_{E_k}(\cat{sMod}_{\bF_p}^\bN)$ \cite[Theorem 11.5.3]{e2cellsI}; the result is a factorisation in this category of the map $f$ as
\[\gE_k(S^{1,0}_{\F_p}) \overset{g}\lra \gC \overset{\simeq}\lra \gA,\]
where $g$ is a relative CW object obtained by attaching $E_k$-cells in bidegrees satisfying $d \geq n-1$ for all $n$ and satisfying $d \geq n$ for $n \leq p$. This CW object comes with a skeletal filtration $\mr{sk}(\gC)$, whose associated graded is
\[\gE_k\Big(S^{1,0,0}_{\bF_p} \sigma \vee \bigvee_{i \in I} S^{n_i,d_i,d_i}_{\bF_p} x_i\Big)\]
with $d_i \geq n_i-1$ for all $n_i$ and with $d_i \geq n_i$ for $n_i \leq p$, as a consequence of \cite[Theorem 6.14]{e2cellsI} and the fact that the domain of $g$ is free on a $0$-dimensional generator.

Rectifying and unitalising, and taking the quotient by $\sigma$, we get a spectral sequence (see \cite[Theorem 10.10]{e2cellsI}): 
\[E^1_{n,p,q} = \widetilde{H}_{n,p+q,q} \left(\overline{\gE_k\Big(S^{1,0,0}_{\bF_p} \sigma \vee \bigvee_{i \in I} S^{n_i,d_i,d_i}_{\bF_p} x_i\Big)}/\sigma\right) \Longrightarrow H_{n,a+b}(\overline{\gA}_{\bF_p}/\sigma).\]
We have that
\[H_{*,*,*}\left(\overline{\gE_k\Big(S^{1,0,0}_{\bF_p} \sigma \vee \bigvee_{i \in I} S^{n_i,d_i,d_i}_{\bF_p} x_i\Big)}\right) = W_{k-1}\Big(\bF_p\{\sigma\}[1,0,0] \oplus \bigoplus_{i \in I} \bF_p \{x_i\}[n_i,d_i,d_i]\Big)\]
with right side a free $W_{k-1}$-algebra (see \cite[Section 16.2]{e2cellsI} for a discussion of $W_{k-1}$-algebras). This is a free graded-commutative algebra on iterated Dyer--Lashof, top, and associated operations applied to iterated bracketings of the generators and thus multiplication by $\sigma$ is injective. As a consequence we have an identification
\[E^1_{*,*,*} = \left(W_{k-1}\Big(\bF_p\{\sigma\}[1,0,0] \oplus \bigoplus_{i \in I} \bF_p \{x_i\}[n_i,d_i,d_i]\Big)/\sigma,d^1\right).\]
To finish the proof, we give a lower bound on the non-trivial entries on the $E^1$-page. To do so, we introduce the notion of \emph{slope}: an element in tridegree $(n,p,q)$ has slope $\tfrac{p}{n}$. We claim that all generators have slope $\geq \tfrac{m}{m+1}$. Dyer--Lashof operations and Browder operations preserve or increase slope, so this is clear for Dyer--Lashof operations and Browder operations applied to $x_i$'s. For Dyer--Lashof operations and Browder operations applied to $\sigma$, the assumptions $p \geq 3$ and $k \geq 3$ are needed. The lowest slope generators occur when $p=3$ and are given by the Dyer--Lashof operation $\beta Q^1(\sigma)$ in bidegree $(3,3)$ when $p=3$ and the Browder bracket $[\sigma,\sigma]$ in bidegree $(2,2)$ when $k=3$. This implies the $E^1$-page vanishes in the desired range and hence so does the abutment of the spectral sequence. 
\end{proof}

\subsection{Homological stability for general linear groups after inverting $2$} 
 Recall that in  \autoref{sec:einfty} we defined for each ring $R$ and commutative ring $\bk$ a non-unital $E_\infty$-algebra $\gR_{\bk}$ in $\cat{sMod}_{\bk}^{\bN}$ satisfying
\[H_*(\gR_{\bk}(n)) = \begin{cases} 0 & \text{if $n=0$,} \\
H_*(\GL_n(R);\bk) & \text{if $n>0$,}\end{cases}\]
so that $H_{n,d}(\overline{\gR}_\bk/\sigma) = H_d(\GL_n(R),\GL_{n-1};\bk)$. We now prove \autoref{thm:integers}, which is equivalent to $\gR_{\Z[\quot12]}$ satisfying homological stability of slope 1 for $R$ the integers, the Gaussian integers, or the Eisenstein integers. See \autoref{fig:chart} for an updated chart of the rational homology of $\GL_n(\bZ)$.

\begin{proof}[Proof of \autoref{thm:integers}] Let $R$ be a Euclidean domain such that $BA_n^m$ is spherical for all $n$ and $m$ (e.g.~the integers, the Gaussian integers, or the Eisenstein integers). By \autoref{thm.steinbergcoinv} and the isomorphism
	\[H^{E_1}_{n,d}(\gR_{\Z[\quot12]}) = H_{d-n+1}(\GL_n(R);\St^{E_1}(R) \otimes \Z[\quot12])\]
of \autoref{prop:e1-splitting}, we see that $\gR_{\Z[\quot12]}$ satisfies the hypotheses of \autoref{E3} for $m=\infty$. The claim follows.\end{proof}

We next prove \autoref{thm:Euclidean}, which is equivalent to $\gR_{\Z[\quot12]}$ satisfying homological stability of slope $\tfrac{2}{3}$ when $R$ is a Euclidean domain.

\begin{proof}[Proof of \autoref{thm:Euclidean}]
Let $R$ be a Euclidean domain. By \autoref{S2}, \cite[Theorem 1.1]{Charney}, and the isomorphism 
of \autoref{prop:e1-splitting}, we see that $\gR_{\Z[\quot12]}$ satisfies the hypotheses of \autoref{E3} for $m=2$. The claim follows. 
\end{proof}

\begin{figure}
	\begin{tikzpicture}
		\begin{scope}[scale=.8]
			\def\WW{1.25}
			\clip (-.75,-.75) rectangle ({\WW*10+.5},10.5);
			\draw[thick,Blue] (1,0) -- ({11*\WW},10); 
			\draw (-.5,0)--({\WW*10+0.8},0);
			\draw (0,-1) -- (0,10.5);
			
			\begin{scope}
				\foreach \s in {0,...,16}
				{
					\draw [dotted] (-.5,\s)--({\WW*16+0.25},\s);
					\node [fill=white] at (-.25,\s) [left] {\tiny $\s$};
				}

				\foreach \s in {0,...,16}
				{
					\draw [dotted] ({\WW*\s},-0.5)--({\WW*\s},16.5);
					\node [fill=white] at ({\WW*\s},-.5) {\tiny $\s$};					
				}
			\end{scope}

			\foreach \s in {0,...,10}
			{
				\node [fill=white] at ({\s*\WW},0) {$\bQ$};
				\draw[->,thick] ({\s*\WW+.3},0) -- ({\s*\WW+.9},0);
				\node at ({\s*\WW+.6},0.2) {\tiny $\cong$};
			}
				
			\foreach \s in {5,...,10}
			{
				\node [fill=white] at ({\s*\WW},5) {$\bQ$};
				\draw[->,thick] ({\s*\WW+.3},5) -- ({\s*\WW+.9},5);
				\node at ({\s*\WW+.6},5.2) {\tiny $\cong$};
			}	
		
			\foreach \s in {8,9,10}
			{
				\node [fill=white] at ({8*\WW},\s) {$?$};
			}	
			\foreach \s in {9,10}
			{
				\node [fill=white] at ({9*\WW},\s) {$?$};
			}	
		
			\foreach \s in {7,...,10}
			{
				\draw[->,thick] ({\s*\WW+.3},9) -- ({\s*\WW+.9},9);
			}	
		
			\node at ({7*\WW+.6},9.28) {\tiny inj};
			\node at ({8*\WW+.6},9.28) {\tiny rk ${\geq} 1$};
			\node at ({9*\WW+.6},9.28) {\tiny surj};
			
			\node [fill=white] at ({6*\WW},8) {$\bQ$};
			\node [fill=white] at ({7*\WW},9) {$\bQ$};
			
			\node [fill=white] at ({10*\WW},9) {$\bQ$};
			\node [fill=white] at ({10*\WW},10) {$?$};

			\node [fill=white] at (-.5,-.5) {$\nicefrac{d}{g}$};
			
			\fill[blue!5!white,blend mode=darken] (1,0) -- ({11*\WW},10) -- ({20*\WW},0) -- cycle;
			
			\fill[pattern=north west lines,pattern color=black!20!white,blend mode=darken] ({7.5*\WW},6.6) -- ({7.5*\WW},20) -- ({12*\WW},24) -- ({14*\WW},13) -- cycle;

		\end{scope}
	\end{tikzpicture}
	\caption{The homology groups $H_d(\GL_n(\bZ);\bQ)$. Empty entries are zero groups and question marks denote unknown groups. The stable range is shaded blue, and the unknown unstable range is hatched. These computations are obtained by reduction to the cohomology of $\SL_n(\bZ)$ with coefficients in the trivial and sign representations. For $n=2,3$ these are folklore results (e.g.~\cite[Theorem 42]{SouleNotes}). For $n=4$ one uses that $H_*(\GL_4(\Z);\bQ^\mr{sign})$ was computed by Horozov \cite[Theorem 1.1]{Horozov}. For $n=5,6,7$ one uses \cite[Theorem 7.3]{EVGS}. For the stabilisation maps in degree $5$ and $9$, one uses \cite[Remark 16]{GKT}.}
	\label{fig:chart}
\end{figure}
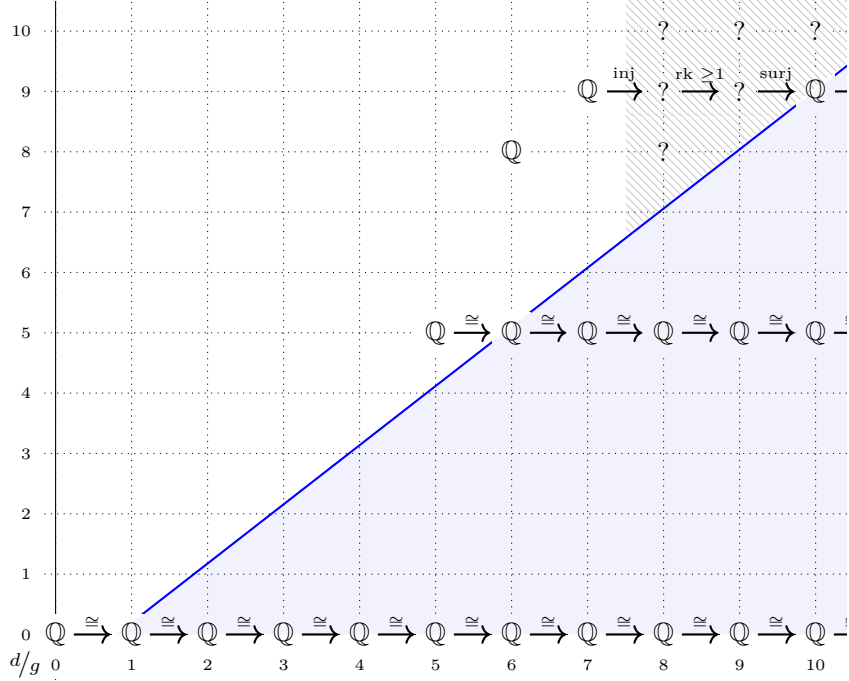

\subsection{Optimality} \label{optimal} \autoref{thm:integers} is optimal in the sense that the slope can not be improved without making the constant term worse. 

\begin{example}[Integers] For the integers, that \autoref{thm:integers} is optimal is a consequence of calculations of Elbaz-Vincent--Gangl--Soul\'{e} that $H_5(\GL_5(\bZ),\GL_{4}(\bZ);\Q) \neq 0$ \cite[Theorem 7.3]{EVGS} (see \autoref{fig:chart}). It is necessary to invert 2 in the coefficients by \cite[Remark 6.6]{e2cellsIII}.\end{example}

\begin{example}[Gaussian integers and Eisenstein integers] For the Gaussian integers or Eisenstein integers, that \autoref{thm:integers} is optimal can be seen more easily. For $R$ the ring of integers in a quadratic imaginary number field, the classifying space of $\GL_2(R)$ is homotopy equivalent to a non-compact hyperbolic $3$-dimensional orbifold and hence $H_3(\GL_2(R);\Q) = 0$. But Borel showed that $\mr{colim}_{n \to \infty} H_3(\GL_\infty(R);\Q) \cong \Q$ for such rings \cite[Section 11]{Bor} so $H_3(\GL_2(R);\Q)$ does not surject onto the stable homology. See also the work of Dutour-Sikiri\'{c}--Gangl--Gunnells--Hanke--Sch\"{u}rmann--Yasaki \cite{DGGJSY}.\end{example}
	
One could also ask about improved homological stability for arithmetic groups. It follows from the work of Borel \cite{Bor} that if $R$ is a number ring and $\Gamma \subseteq \GL_n(R)$ is a finite index subgroup, then $H_*(\Gamma;\Q) \m H_*(\GL_n(R);\Q)$ is an isomorphism in the stable range. As consequence, if $\Gamma_n$ is a finite index subgroup of $\GL_n(R)$ and $\Gamma_{n+1}$ is a finite index subgroup of $\GL_{n+1}(R)$, then there is an isomorphism $H_i(\Gamma_n;\Q) \cong H_i(\Gamma_{n+1};\Q)$ in a stable range. One might ask whether for $R$ the integers, Gaussian integers, or Eisenstein integers, this range is as in \autoref{thm:integers}. This can be not true the case without allowing offsets:

\begin{example}[Congruence subgroups of $\GL_2(\bZ)$]
For example, for $p \geq 7$ the level-$p$ principal congruence subgroups of $\GL_2(\bZ)$ are nontrivial free groups and hence have nontrivial rational first homology group \cite[Theorem 8]{Gunning}. On the other hand, $\GL_1(\Z) \cong \Z/2$ so all of its subgroups have trivial first rational homology group. In particular, for many finite index subgroups $\Gamma_2 \subseteq \GL_2(\Z)$, and subgroups $\Gamma_1 \subseteq \GL_1(\Z) \cap \Gamma_2$, $H_1(\Gamma_2,\Gamma_1;\Q) \neq 0$.
\end{example}

In the sense of homological stability, the full general linear groups $\GL_n(R)$ for $R$ the integers, Gaussian integers, or Eisenstein integers thus seem to be exceptional among the arithmetic subgroups of $\GL_n(R)$. It would be interesting to see whether \autoref{thm:integers} can be recovered through the automorphic approach to the cohomology of arithmetic groups.

\section{Proof of \autoref{thm:integersf2}} In this section we prove \autoref{thm:integersf2} by combining a general homological stability result with a computation in \cite{e2cellsIII}.

\subsection{Cell attachments and homological stability} In this subsection, we prove a lemma which informally says that if you attach an $E_\infty$-cell of high enough slope, it cannot create homological stability that was not already there. The following is a variant of arguments appearing in \cite[Theorem A]{e2cellsIII} and \cite[Theorem 9.4]{e2cellsIV}. 

\begin{lemma} \label{AttachHigh}
Let $\bk$ be a PID and let $\gA \in \cat{Alg}_{E_\infty}(\cat{sMod}_{\bk}^{\bN})$. Assume $H_{*,0}(\overline{\gA}) \cong \bk[\sigma]$ with $\sigma \in H_{1,0}(\gA)$. Take $\gB=\gA \cup^{E_\infty}_\alpha D^{x,y}_{\bk} \beta$ with $y \geq \quot{m}{m+1}x+1$. Then if
\[H_{n,d}(\overline{\gB}/\sigma) = 0 \qquad \text{for $d \leq \quot{m}{m+1}(n-1)$},\]
we also have 
\[H_{n,d}(\overline{\gA}/\sigma) = 0 \qquad \text{for $d \leq \quot{m}{m+1}( n-1)$}.\]
\end{lemma}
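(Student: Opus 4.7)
My plan is to use the skeletal spectral sequence associated to the relative CW $E_\infty$-algebra $\gA \to \gB$ with the single attached cell $\beta$ of bidegree $(x,y)$, and leverage the hypothesis on $y$ to ensure that $\beta$ and all its $E_\infty$-descendants live strictly above the slope line $d = \quot{m}{m+1}n$. The inputs are \cite[Theorem 6.14]{e2cellsI}, which identifies the associated graded of this filtration as the $E_\infty$-coproduct $\gA \sqcup \gE_\infty(S^{x,y}\beta)$, and \cite[Theorem 10.10]{e2cellsI}, which produces the resulting spectral sequence. After unitalising, rectifying, and quotienting by $\sigma$, this yields
\[ E^1_{n,p,q} \;\Longrightarrow\; H_{n,p+q}(\overline{\gB}/\sigma), \]
with the filtration index $p$ tracking the $\beta$-weight. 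Since $\bk$ is a PID, a Kunneth-type argument (reducing prime-by-prime if necessary, cf.~\cite[Theorem 18.2]{e2cellsI}) decomposes the $E^1$-page as
\[ E^1_{n,p,q} \cong \bigoplus_{\substack{n_A + n_C = n \\ d_A + d_C = p+q}} H_{n_A,d_A}(\overline{\gA}/\sigma) \otimes H^{(p)}_{n_C,d_C}\bigl(\gE_\infty^+(S^{x,y}\beta)\bigr), \]
where $H^{(p)}$ denotes the $\beta$-weight $p$ summand; in particular $E^1_{n,0,q} = H_{n,q}(\overline{\gA}/\sigma)$.

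The key slope estimate uses that iterated Dyer--Lashof, Browder, and multiplicative operations preserve or increase slope $d/n$, as recalled in the proof of \autoref{E3}. Since $\beta$ has slope $y/x \geq \quot{m}{m+1} + 1/x$ and $n_C \geq x$ for $p \geq 1$, any non-zero class in $H^{(p)}_{n_C,d_C}$ with $p \geq 1$ satisfies
\[ d_C \;\geq\; (y/x)\, n_C \;\geq\; \quot{m}{m+1}\, n_C + 1. \]
I would then prove the desired vanishing $H_{n,d}(\overline{\gA}/\sigma) = 0$ for $d \leq \quot{m}{m+1}(n-1)$ by strong induction on $n$, with the base cases $n \leq 1$ following from the hypothesis $H_{*,0}(\overline{\gA}) \cong \bk[\sigma]$ and the long exact sequence of multiplication by $\sigma$. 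For the inductive step, classes in $E^r_{n,0,d}$ have no outgoing differentials, and any incoming $d_r \colon E^r_{n, r, d-r+1} \to E^r_{n, 0, d}$ with $r \geq 1$ has source involving Kunneth summands $H_{n_A,d_A}(\overline{\gA}/\sigma) \otimes H^{(r)}_{n_C,d_C}$ with $n_A + n_C = n$, $d_A + d_C = d+1$, and $d_C \geq \quot{m}{m+1}\, n_C + 1$. Combining these with $d \leq \quot{m}{m+1}(n-1)$ gives
\[ d_A \;\leq\; d - \quot{m}{m+1}\, n_C \;\leq\; \quot{m}{m+1}(n-1) - \quot{m}{m+1}\, n_C \;=\; \quot{m}{m+1}(n_A - 1), \]
together with $n_A \leq n - x < n$. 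By induction $H_{n_A,d_A}(\overline{\gA}/\sigma) = 0$, so every summand of the differential's source vanishes. Hence $E^\infty_{n,0,d} = E^1_{n,0,d} = H_{n,d}(\overline{\gA}/\sigma)$; since the spectral sequence abuts to $H_{n,d}(\overline{\gB}/\sigma) = 0$, this forces $H_{n,d}(\overline{\gA}/\sigma) = 0$.

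The main obstacle will be the careful setup of the skeletal spectral sequence with its Kunneth decomposition along the $\beta$-weight filtration, together with verifying the slope-preservation of iterated $E_\infty$-operations uniformly across characteristics of $\bk$. The hypothesis $y \geq \quot{m}{m+1}x + 1$ is sharp in exactly the sense needed: it is what produces the $+1$ margin in the estimate $d_C \geq \quot{m}{m+1}n_C + 1$, and this margin is precisely what is required to push $(n_A,d_A)$ into the smaller-$n$ range where the inductive hypothesis applies.
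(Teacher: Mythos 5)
Your proposal is correct and follows essentially the same route as the paper: the same skeletal spectral sequence from \cite[Corollary 10.10]{e2cellsI}, the same K\"unneth decomposition of the $E^1$-page into $\overline{\gA}/\sigma$ and free-$E_\infty$-on-$\beta$ factors, and the same slope estimate derived from $y \geq \quot{m}{m+1}x + 1$ together with slope-preservation of $E_\infty$-operations (the paper cites \cite[Proposition 16.4.1]{e2cellsI} for this). Two small remarks: your K\"unneth decomposition over a PID $\bk$ should also carry a $\Tor_1^\bk$-summand in total degree $d$ (rather than $d+1$), which the paper handles explicitly and which vanishes by the same --- in fact slightly easier --- estimate, so ``reducing prime-by-prime'' isn't needed; and your strong induction on $n$ is a cosmetic repackaging of the paper's choice of a minimal counterexample in $d$, both amounting to the observation that the sources of all incoming differentials have tensor factors lying strictly in the region already known to vanish.
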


\begin{proof}We can lift $\gB$ to a filtered $E_\infty$-algebra $f\gB = 0_* \gA \cup^{E_\infty}_\alpha D^{x,y,1} \beta$. The associated spectral sequence as in \cite[Corollary 10.10]{e2cellsI} has the form: \[E^1_{n,p,q} = \widetilde{H}_{n,p+q,p} \left(\Big(0_*\overline{\gA} \vee^{E_\infty}  S^{x,y,1}_{\bk} \beta\Big)/\sigma \right) \Longrightarrow H_{n,p+q}(\overline{\gB}/\sigma).\] Assume for the purposes of contradiction that $H_{n,d}(\overline{\gA}/\sigma) \neq 0$ for some $d \leq \quot{m}{m+1}( n-1)$; we take such $n$ and $d$ so that $d$ is as small as possible. Then $E^1_{n,0,d} \neq 0$ since $E^1_{n,0,d} \cong H_{n,d}(\overline{\gA}/\sigma)$. We will show there are no differentials into or out of $E^r_{n,0,d} $ for $r \geq 1$ and so $E^1_{n,0,d} \cong E^\infty_{n,0,d}$. Since $E^\infty_{n,0,d}$ is a subquotient of $H_{n,d}(\overline{\gB}/\sigma)$, this will contradict the assumption that  $H_{n,d}(\overline{\gB}/\sigma) = 0$ for $(m+1)d \leq m(n-1)$.

The differentials are given by $d^r \colon E^r_{n,p,q} \to E^r_{n,p-r,q+r-1}$. But since there are no terms in negative filtration we have that $E^1_{n,p,q} = 0$ when $p<0$, there are no non-zero differentials out of the group $E^1_{n,0,d}$. 

We next consider the differentials into $E^1_{n,0,d}$, which have the form $d^r \colon E^r_{n,r,d-r+1} \to E^r_{n,0,d}$. The K\"unneth theorem combined with \cite[Proposition 16.4.1]{e2cellsI} implies that the group $E^1_{n,r,d-r+1}$ is an extension of the two groups
\[ \bigoplus_{n'+n''=n, d'+d''=d+1} H_{n',d'}\left(\overline{\gA}/\sigma \right) \otimes_\bk H_{n'',d'',r}\left(\overline{\gE}_\infty(S^{x,y,1}_{\bk} \beta)  \right),\]
\[\bigoplus_{n'+n''=n, d'+d''=d} \Tor_1^\bk \left( H_{n',d'}\left(\overline{\gA}/\sigma \right), H_{n'',d'',r}\left(\overline{\gE}_\infty(S^{x,y,1}_{\bk} \beta)\right) \right).\] It suffices to prove that both of these groups vanish when $r \geq 1$. In this case, the group $H_{n'',d'',r}(\overline{\gE}_\infty(S^{x,y,1}_{\bk} \beta))$ vanishes whenever $d'' < \tfrac{m}{m+1}n''+1$, so can only be non-trivial when $(m+1)d''\geq mn''+m+1$. By our assumption $d$ is the lowest degree where homological stability for $\gA$ fails and thus $H_{n',d'}(\overline{\gA}/\sigma) = 0$ vanishes for $d' \leq \quot{m}{m+1}(n'-1)$ and $d'<d$, so this group can only be non-trivial for $(m+1)d' > m(n'-1)$. Thus we can only get a non-trivial class when
\begin{align*}(m+1)d&=(m+1)d'+(m+1)d''-(m+1)\\
	&>m(n'-1)+mn''+m+1-m-1\\
	&= m(n'+n''-1) =  m(n-1)\end{align*}
which contradicts our assumption that $(m+1)d \leq m(n-1)$.

A similar computation---in fact, the analogous estimate is easier because we have $d = d'+d''$ rather than $d-1=d'+d''$---shows the $\Tor$-term also vanishes in the desired range, completing the proof.  
\end{proof}

\subsection{Homological stability for a certain small cellular $E_\infty$-algebra}

In this subsection, we describe a ``small'' $E_\infty$-algebra that will serve as our approximation of general linear groups. Using \autoref{AttachHigh}, we will compare this to a non-unital $E_\infty$-algebra used in \cite{e2cellsIII} to study $\GL_n(\F_2)$. 

For the next construction, we will need the following fact: for $\gA \in \cat{Alg}_{E_\infty}(\cat{sMod}_\bZ^\bN)$ and $\sigma \in H_{1,0}(\gA)$, there is an integral lift $Q^1_\bZ(\sigma) \in H_{2,1}(\gA)$ of $Q^1_2(\sigma) \in H_{2,1}(\gA;\bF_2)$.

\begin{definition} 
	Given a ring $R$, we define $\gA_\bZ(R) \in \cat{Alg}_{E_\infty}(\cat{sMod}^\bN_\bZ)$ by \[\gA_\bZ(R) \coloneqq \gE_\infty\left ( S_\bZ^{1,0} \sigma    \oplus \bigoplus_{r \in R^\times }  S_\bZ^{1,1} u_r \right) \cup^{E_\infty}_{\sigma Q^1_\bZ(\sigma)- \sigma^2 u_{-1}} D^{3,2}_\bZ \tau.\]\end{definition}

\begin{proposition} \label{ARstab}
For any ring $R$, $H_{d,n}(\overline{\gA_\bk(R)}/\sigma) \cong 0$  for $d<\quot{2}{3}(n-1)$.
\end{proposition}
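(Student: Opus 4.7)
The plan is to apply \autoref{AttachHigh} to transport a known slope-$\tfrac{2}{3}$ stability result from \cite{e2cellsIII} to $\gA_\bk(R)$. The paper \cite{e2cellsIII} analyzes a cellular $E_\infty$-algebra $\gB_\bk$ used to prove homological stability for $\GL_n(\F_2)$; this $\gB_\bk$ is built from the same initial cells as $\gA_\bk(\F_2)$---namely $\sigma$ in bidegree $(1,0)$, $u_1$ in bidegree $(1,1)$, and the relation cell $\tau$ in bidegree $(3,2)$ killing $\sigma Q_\bZ^1(\sigma) - \sigma^2 u_1$---and then further $E_\infty$-cells $\{D^{x_i,y_i}\}_{i \in I}$ are attached with slopes satisfying $y_i \geq \tfrac{2}{3} x_i + 1$. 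The main result of \cite{e2cellsIII} is that $\gB_\bk$ satisfies $H_{n,d}(\overline{\gB_\bk}/\sigma) = 0$ for $d < \tfrac{2}{3}(n-1)$.

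The first step is to extend the construction of $\gB_\bk$ from $\F_2$ to an arbitrary ring $R$: the further cells attached are prescribed by the $E_1$-homology classes of $\gA_\bk(R)$, and the bookkeeping in \cite{e2cellsIII} carries over once we replace the single unit of $\F_2$ by the full unit set $R^\times$. This yields an enlargement $\gB_\bk(R) \supseteq \gA_\bk(R)$ obtained by attaching $E_\infty$-cells of slopes $\geq \tfrac{2}{3} + \tfrac{1}{x_i}$, and the same argument as in \cite{e2cellsIII} gives $H_{n,d}(\overline{\gB_\bk(R)}/\sigma) = 0$ for $d < \tfrac{2}{3}(n-1)$.

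The second step is to apply \autoref{AttachHigh} iteratively, peeling off the cells of $I$ one at a time in reverse order of attachment. Each application requires only the slope condition $y_i \geq \tfrac{2}{3} x_i + 1$, which is exactly what was arranged in the construction of $\gB_\bk(R)$. After all cells have been removed we are left with $\gA_\bk(R)$, and the vanishing $H_{n,d}(\overline{\gA_\bk(R)}/\sigma) = 0$ for $d < \tfrac{2}{3}(n-1)$ follows.

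The main obstacle is the first step: confirming that the stability argument of \cite{e2cellsIII} is sufficiently functorial in $R$ and that every cell it attaches genuinely meets the slope bound $y_i \geq \tfrac{2}{3} x_i + 1$. This reduces to verifying that, beyond the initial generators $\sigma$, $\{u_r\}$ and the cell $\tau$ chosen to kill the low-slope $E_1$-class $\sigma Q_\bZ^1(\sigma) - \sigma^2 u_{-1}$, no additional low-slope $E_1$-homology classes arise---a fact controlled by the vanishing theorems for splitting-Steinberg coinvariants established in the preceding sections. Once this bookkeeping is in place, the iterative application of \autoref{AttachHigh} is routine.
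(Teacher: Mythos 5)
Your proposal correctly identifies the overall shape of the argument — enlarge $\gA_\bZ(R)$ by attaching high-slope $E_\infty$-cells, establish the slope-$\tfrac{2}{3}$ vanishing for the enlargement, and then peel off the cells with \autoref{AttachHigh}. However, it misses the key trick that makes the first step actually go through, and the step you flag as the "main obstacle" is left genuinely open.

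The missing idea is \emph{cancellation}. The paper attaches, for each $r \in R^\times$, an $E_\infty$-cell $w_r$ in bidegree $(1,2)$ whose attaching map is the generator $u_r$. Each $w_r$ cancels the corresponding low-slope cell $u_r$, so the resulting algebra $\gB_\bZ(R) = \gA_\bZ(R) \cup^{E_\infty} \bigcup_{r} D^{1,2}_\bZ w_r$ collapses to something weakly equivalent to
\[\gB = \gE_\infty(S^{1,0}_\bZ \sigma) \cup^{E_\infty}_{\sigma Q^1_\bZ(\sigma)} D^{3,2}_\bZ \tau,\]
which is concrete and, crucially, \emph{independent of $R$}. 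That is precisely why the result of \cite{e2cellsIII} for $\F_2$ transfers to arbitrary $R$: all dependence on $R$ has been erased by the cancellations. Your proposal instead tries to build $\gB_\bk(R)$ by carrying along the full family $\{u_r\}_{r \in R^\times}$ and argue that "the bookkeeping in \cite{e2cellsIII} carries over." But the $u_r$ are slope-$1$ cells in bidegree $(1,1)$; they sit below the desired slope-$\tfrac{2}{3}$ line and are exactly the obstruction you need to eliminate, not cells you can keep around while replaying an argument written for a single unit. Nothing in your sketch explains why the stability analysis of \cite{e2cellsIII} — which is about a specific algebra modelling $\GL_n(\F_2)$ — should apply to a free cellular algebra with an arbitrary set of extra degree-$(1,1)$ generators. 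This is a real gap, and the cancellation trick is what closes it.

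A secondary confusion: you suggest the verification reduces to "vanishing theorems for splitting-Steinberg coinvariants established in the preceding sections." Those results concern the derived $E_\infty$-algebra $\gR_\bk$ built from actual general linear groups. The object $\gA_\bk(R)$ in this proposition is a purely formal cellular $E_\infty$-algebra whose $E_1$/$E_\infty$-homology is read off directly from its explicit cell structure; no Steinberg-module input is needed or relevant here. (The Steinberg coinvariants enter the paper when relating $\gA_\bZ(R)$ to $\gR_\bZ$ in the proof of the eventual homological stability theorem, not in the proof of \autoref{ARstab}.) Also note that once one has $\gB_\bZ(R) \simeq \gB$, the vanishing over $\F_2$ is imported from \cite{e2cellsIII} while the vanishing over $\Q$ and $\F_p$ ($p$ odd) follows by running the proof of \autoref{E3} on the explicit cell structure of $\gB$; combining these by universal coefficients gives the integral statement before applying \autoref{AttachHigh}.
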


\begin{proof}We let $\gB_\bZ(R) \in \cat{Alg}_{E_\infty}(\cat{sMod}_\bZ^\bN)$ be obtained from $\gA_\bZ(R)$ by attaching $E_\infty$-cells $w_r$ in bidegree $(1,2)$ along the generators $u_r$, on each for $r \in R^\times$: \[\gA_\bZ(R) \coloneqq \gA_\bZ(R) \cup^{E_\infty} \bigcup^{E_\infty}_{r \in R^\times} D^{1,2}_\bZ w_r.\] Furthermore, we let $\gB \in \cat{Alg}_{E_\infty}(\cat{sMod}_\bZ^\bN)$ be given by \[\gB \coloneqq \gE_\infty  ( S_\bZ^{1,0} \sigma    )   \cup^{E_\infty}_{\sigma Q^1_\bZ(\sigma)} D^{3,2}_\bZ \tau.\]
	
The composition of the inclusion $\gB \to \gA_\bZ(R)$ and the map $\gA_\bZ(R) \to \gB_\bZ(R)$ is a weak equivalence $\gB \to \gB_\bZ(R)$ since for each $r \in R^\times$ the $E_\infty$-cell $w_r$ cancels $u_r$.  

The proof of \cite[Proposition 6.4]{e2cellsIII} shows that $H_{d,n}(\overline{\gB}/\sigma \otimes \F_2) \cong 0$  for $d<\quot{2}{3}(n-1)$. As $\gB$ is an explicit cellular $E_\infty$-algebra, one can read off a vanishing line for $H_{n,d}^{E_\infty}(\gB)$. The proof of \autoref{E3} then yields that $H_{d,n}(\overline{\gB}/\sigma \otimes \F) = 0$  for $d<\quot{2}{3}(n-1)$ with $\F=\Q$ or $\F_p$ with $p$ odd. This implies $H_{d,n}(\smash{\overline{\gB}/\sigma}) = 0$ in the same range.

We conclude that $H_{d,n}(\gB_\bZ(R)/\sigma) = 0$  for $d<\quot{2}{3}(n-1)$ as well. The claim now follows by \autoref{AttachHigh} taking $\gA$ to be $\gA_\bZ(R)$, $\gB$ to be $\gB_\bZ(R)$, $m$ to be $2$, and $(x,y)$ to be $(1,2)$.
\end{proof}

\subsection{Integral homological stability for general linear groups}  Now suppose that $R$ is a Euclidean domain, so in particular a PID and construct $\gR_\bZ$ as in \autoref{sec:gl-einfty}. To prove integral stability for general linear groups, we need to understand when $\gA_\Z(R)$ is a good approximation for $\gR_{\Z}$. We first begin by describing a map between them. Picking representatives of $\sigma \in H_{1,0}(\gR_\bZ)$ and $u_r \in H_{1,1}(\gR_\bZ)$, we obtain a map
\[\gE_\infty\left ( S_\bZ^{1,0} \sigma   \oplus \bigoplus_{r \in R^\times } S_\bZ^{1,0} u_r \right) \lra \gR_\bZ.\]
We claim $\sigma Q^1_\bZ(\sigma)- \sigma^2 u_{-1}=0 \in H_1(\GL_3(R);\bZ)$. We may verify this in the initial case $R = \bZ$, and since  $\det\colon H_1(\GL_3(\Z);\Z) \to \Z^\times$ is an isomorphism, it suffices to observe that
\[\det \begin{bmatrix} 0 & 1 & 0 \\
	1 & 0 & 0 \\
	0 &0 & 1 \end{bmatrix} = \det \begin{bmatrix} -1 & 0 & 0 \\
	0 & 1 & 0 \\
	0 &0 & 1 \end{bmatrix}.\]
As a consequence, we may extend this to a map 
\[f \colon \gA_\bZ(R) = \gE_\infty\left ( S_\bZ^{1,0} \sigma   \oplus \bigoplus_{r \in R^\times } S_\bZ^{1,0} u_r \right)  \cup^{E_\infty}_{\sigma Q^1_\bZ(\sigma)-\sigma^2 u_{-1}} D^{3,2}_\bZ \tau \lra \gR_\bZ.\]

\begin{lemma} \label{E2cellVanish}
Let $R$ be a Euclidean domain such that the image of $R^\times$ generates $R/2$ as an abelian group. Then $f_*\colon H_{2,1}(\gA_\Z(R)) \m H_{2,1}(\gR_\Z) $ is surjective.
\end{lemma}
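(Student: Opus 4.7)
The plan begins by computing $H_{2,1}(\gA_\Z(R))$. Because the $E_\infty$-cell $\tau$ is attached in bidegree $(3,2)$, it does not contribute to $H_{2,1}$, so $H_{2,1}(\gA_\Z(R))$ agrees with the $H_{2,1}$ of the underlying free $E_\infty$-algebra and is generated over $\Z$ by the products $\sigma u_r$ for $r \in R^\times$ together with the integral Dyer--Lashof class $Q^1_\Z(\sigma)$. By naturality of the $E_\infty$-structure, $f_*(\sigma u_r)$ is the abelianisation class $[\mr{diag}(1,r)] \in H_1(\GL_2(R);\Z)$. For the integral lift of $Q^1_2(\sigma)$ coming from the natural map $BS_2 \to \gA_\Z(R)(2)$ induced by the $E_\infty$-structure, the class $f_*(Q^1_\Z(\sigma))$ equals $[T]$, where $T = \begin{bsmallmatrix} 0 & 1 \\ 1 & 0 \end{bsmallmatrix}$ is the transposition matrix; note that $T^2 = I$ forces $2[T] = 0$, confirming that $[T]$ is a valid integral lift of the mod-$2$ Dyer--Lashof class.

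The goal is therefore to show that $H_1(\GL_2(R);\Z) = \GL_2(R)^{\mr{ab}}$ is generated by $\{[\mr{diag}(1,r)]\}_{r\in R^\times}$ and $[T]$. Since $R$ is Euclidean, $\SL_2(R)$ is generated by elementary matrices, so $\GL_2(R)$ is generated by the matrices $E_{12}(a), E_{21}(a)$ for $a \in R$ together with the diagonals $\mr{diag}(r,1)$ for $r \in R^\times$. Conjugation with $T$ gives $[E_{21}(a)] = [E_{12}(a)]$ in the abelianisation, so it suffices to control $[E_{12}(a)]$ for all $a \in R$. The assignment $a \mapsto [E_{12}(a)]$ is additive (since $E_{12}(a)E_{12}(b) = E_{12}(a+b)$) and invariant under $a \mapsto ra$ for $r \in R^\times$ (by conjugation with $\mr{diag}(r,1)$); combining these for $r = -1$ yields $2[E_{12}(a)] = 0$. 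Using the hypothesis to write $a = 2b + \sum_i n_i r_i$ with $r_i \in R^\times$ and $n_i \in \Z$, I obtain $[E_{12}(a)] = (\sum_i n_i)[E_{12}(1)]$, reducing the problem to showing $[E_{12}(1)] \in \mr{im}(f_*)$.

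This last step follows from the factorisation $T = W \cdot \mr{diag}(1,-1)$ with $W = \begin{bsmallmatrix} 0 & -1 \\ 1 & 0 \end{bsmallmatrix} = E_{12}(-1)E_{21}(1)E_{12}(-1)$. Passing to the abelianisation and using $2[E_{12}(-1)] = 0$ together with $[E_{21}(1)] = [E_{12}(1)]$ yields $[W] = [E_{12}(1)]$, and hence $[T] = [E_{12}(1)] + [\mr{diag}(1,-1)]$, so $[E_{12}(1)] = [T] - [\mr{diag}(1,-1)] \in \mr{im}(f_*)$. The main subtlety in this argument is the identification $f_*(Q^1_\Z(\sigma)) = [T]$, which requires a careful choice of integral lift via the natural map $BS_2 \to \gA_\Z(R)(2)$ coming from the $E_\infty$-structure; after this identification, everything reduces to elementary matrix manipulation driven by the hypothesis on $R/2$.
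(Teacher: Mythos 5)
Your argument is correct and follows the same overall strategy as the paper's proof (reduce to showing elementary matrices lie in $\mathrm{im}(f_*)$, exploit $2[E_{12}(a)]=0$ and unit-conjugation invariance, then invoke the hypothesis on $R/2$), but the two implementations differ in a couple of places worth noting. First, you explicitly enumerate the generators of $H_{2,1}(\gA_\Z(R))$ and compute their images, whereas the paper sidesteps this by asserting directly that $\mathrm{im}(f_*)\subseteq H_1(\GL_2(R);\Z)$ coincides with the image of the abelianisation of $\fS_2\wr R^\times \to \GL_2(R)$; the two descriptions agree, but the paper's phrasing avoids the issue you flag regarding pinning down the integral lift $Q^1_\Z(\sigma)$. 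That concern is real but harmless: any two integral lifts map to elements that differ only within the subgroup already generated by $[\mr{diag}(1,r)]$, so the image of $f_*$ contains $[T]$ regardless of normalization. Second, the key step of producing $[E_{12}(1)]$ in the image is carried out differently: the paper uses the map $\fS_3\to\GL_2(R)$ permuting $e_1,e_2,e_1+e_2$ to show that $[T]$ and $\left[\begin{smallmatrix}1 & 1\\ 0 & -1\end{smallmatrix}\right]$ coincide in $H_1$, while you use the explicit factorisation $T=W\cdot\mr{diag}(1,-1)$ with $W=E_{12}(-1)E_{21}(1)E_{12}(-1)$ and deduce $[E_{12}(1)]=[T]-[\mr{diag}(1,-1)]$. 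Your route is arguably more elementary, staying entirely inside the generators-and-relations of $\GL_2(R)^{\mathrm{ab}}$ without appealing to an auxiliary $\fS_3$-action, and it buys you a slightly more transparent accounting of exactly which classes land where.
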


\begin{proof}By construction, the image of $f_*$ is equal to the image of the homomorphism $\fS_2 \wr R^\times \to \GL_2(R)$ on abelianisations. This is generated by the matrices
	\[\begin{bmatrix}
		u & 0 \\
		0 & 1
	\end{bmatrix} \text{ for $u \in R^\times$}, \quad \text{and} \quad \begin{bmatrix}
	0 & 1 \\
	1 & 0
\end{bmatrix}.\]
Since $R$ is Euclidean, $\GL_2(R)$ is generated by elementary matrices \cite[\S 2]{Cohn} so it suffices to prove that the class $[E(r)] \in H_1(\GL_2(R);\bZ)$ of each elementary matrix $E(r)\coloneqq\begin{bsmallmatrix}
	1 & r \\
	0 & 1 \end{bsmallmatrix}$ lies in $\mr{im}(f_*) \subset H_1(\GL_2(R);\bZ)$.

We first prove this for $E(1)$. Consider the map $\fS_3 \to \GL_2(R)$ given by permuting the three vectors $e_1,e_2,e_1+e_2$. This induces a map $\bZ/2 \cong H_1(\fS_3;\bZ) \to H_1(\GL_2(R);\bZ)$ with domain generated by any transposition. Thus the transposition of $e_1$ and $e_2$ gives a matrix $\begin{bsmallmatrix}0 & 1 \\ 1 & 0\end{bsmallmatrix}$ and the transposition of $e_1$ and $e_1+e_2$ gives a matrix $\begin{bsmallmatrix} 1 & 1 \\ 0 & -1\end{bsmallmatrix}$, whose classes in $H_1(\GL_2(R);\bZ)$ are equal. That $[E(1)] \in \mr{im}(f_*)$ follows from 
\[\begin{bmatrix} 1 & 1 \\ 0 & 1\end{bmatrix} = \begin{bmatrix} 1 & 0 \\ 0 & -1\end{bmatrix}\begin{bmatrix} 1 & 1 \\ 0 & -1\end{bmatrix},\]

Once we know $[E(1)] \in \mr{im}(f_*)$, that $[E(u)] \in \mr{im}(f_*)$ for $u \in R^\times$ follows from 
\[\begin{bmatrix} 1 & u \\ 0 & 1\end{bmatrix} = \begin{bmatrix} u & 0 \\ 0 & 1\end{bmatrix}\begin{bmatrix} 1 & 1 \\ 0 & 1\end{bmatrix}\begin{bmatrix} u^{-1} & 0 \\ 0 & 1\end{bmatrix}.\]

Finally, $E(a)$ equals $E(-a)$ in $\GL_2(R)^\mr{ab}$ by \[\begin{bmatrix}
	-1 & 0 \\
	0 & 1 \end{bmatrix}  \begin{bmatrix}
	1 & a \\
	0 & 1 \end{bmatrix} \begin{bmatrix}
	-1 & 0 \\
	0 & 1 \end{bmatrix}^{-1} = \begin{bmatrix}
	1 & -a \\
	0 & 1 \end{bmatrix}.\] Since $E(a+b)=E(a)E(b)$, this implies that $[E(2a)] \in H_1(\GL_2(R);\bZ)$ is zero. Now we finally invoke the hypothesis on $R$ to write $r=2a+u_1+\ldots+u_k$ with $u_i$ units. We have that $E(r)=E(2a)E(u_1) \cdots E(u_k)$ and the result follows.\end{proof}

\begin{remark}
One may ask whether $f_*\colon H_{2,1}(\gA_\Z(R)) \m H_{2,1}(\gR_\Z) $ is surjective for all Euclidean domains; this is not the case for $R=\F_2[x]$ \cite{YCor}.
\end{remark}

We now prove \autoref{thm:integersf2}, which is equivalent to $\gR_\bZ$ satisfying homological stability of slope $\tfrac{2}{3}$ when $R$ is a Euclidean domain so that the image of $R^\times$ generates $R/2$.

\begin{proof}[Proof of \autoref{thm:integersf2}] We will prove $H_d(\GL_n(R),\GL_{n-1}(R);\bZ) = H_{d,n}(\overline{\gR_\Z}/\sigma) = 0$  for $d<\quot{2}{3}(n-1)$. This follows by the same argument as in the first paragraph of the proof of \cite[Proposition 6.4]{e2cellsIII} so we will just give the outline. We obtain a vanishing range for $\smash{H_{n,d}^{E_\infty}(f)}$ using \autoref{E2cellVanish}, Charney's connectivity result \cite[Theorem 1.1]{Charney}, and the construction of $\gA_\Z(R)$ as an explicit cellular $E_\infty$-algebra. The claim then follows by considering a cell attachment spectral sequence using a similar argument as in the proof of \autoref{E3}. This uses homological stability for $\gA_\Z(R)$, which was proven in \autoref{ARstab}. 
\end{proof}

\section{Stability with twisted coefficients}

In this section, we improve the ranges for homological stability with so-called polynomial coefficient systems and use this to improve the ranges in Borel vanishing.

\subsection{Stability with polynomial coefficients}

We begin by recalling the notion of a $\VIC(R)$-module appearing in Putman--Sam \cite{PS}.

\begin{definition}
Let $R$ be a ring. Let $\VIC(R)$ be the category with objects finitely generated free $R$-modules with \[\Hom^{\VIC}(M,N)=\{(f,C) \, |\, f\colon M \m N \text{ is injective}, C \subseteq N, \text{ and } \mr{im}(f) \oplus C = N  \}.\] A $\VIC(R)$-module over a ring $\bk$ is a functor $V\colon \VIC(R) \m \Mod_\bk$.
\end{definition}

By evaluating a $\VIC(R)$-module $V$ on $R^n$, we obtain a sequence 
of $\bk[\GL_n(R)]$-modules $V_n$. The standard inclusion $R^n \m R^{n+1}$ together with the standard choice of complement give $\GL_n(R)$-equivariant maps $V_n \m V_{n+1}$. Given a $\VIC(R)$-module $V$, we get induced maps \[H_i(\GL_n(R);V_n) \m H_i(\GL_{n+1}(R);V_{n+1})  \] so it makes sense to talk about homological stability with these twisted coefficients. One of the most common conditions implying twisted homological stability is \emph{polynomiality}, a notion we recall now (see e.g.~\cite{EilenbergMacLaneII, Pirashvili-PolynomialFunctors, RWW,DjamentVespa-WeakPolynomial}).

\begin{definition} \label{DefPolyn}
Let $V$ be a $\VIC(R)$-module. 
\begin{enumerate}[(i)]
	\item Let $\ker(V)$ be the $\VIC(R)$-module with $\ker(V)(M)=\ker(V(M) \m V(M \oplus R))$ and with value on morphisms induced by $V$ in the obvious way. 
	\item Similarly let $\coker(V)$ be the $\VIC(R)$ with $\coker(V)(M)=\coker(V(M) \m V(M \oplus R))$.
	\item We say $V$ is polynomial of degree $-1$ in ranks $>m$ if $V_n=0$ for $n>m$.
	\item We say $V$ is polynomial of degree $r$ in ranks $>m$ if $\ker (V)$ has polynomial degree $-1$ in rank $>m$ and $\coker(V)$ has polynomial degree $\leq r-1$ in ranks $>m-1$.
\end{enumerate}
\end{definition}

The following theorem specialises to give \autoref{thm:twistedStab}.

\begin{theorem}\label{twistedStab}
Let $R$ be a Euclidean domain such that $BA_n^m$ is spherical for all $n$ and $m$ (e.g.~the integers, Gaussian integers, or Eisenstein integers). Let $V$ be a $\VIC(R)$-module over $\Z[\quot12]$ which is polynomial of degree $r$ in ranks $>m$. Then \[H_d(\GL_n(R),\GL_{n-1}(R);V_n,V_{n-1})=0\] for $d<n-\max(r,m)$.
\end{theorem}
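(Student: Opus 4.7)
The plan is to extend the $E_\infty$-algebra machinery of the previous sections to accommodate polynomial coefficient systems. The first step is to promote the $\VIC(R)$-module $V$ to a right module $\gM_V$ over the $E_\infty$-algebra $\gR_{\Z[\quot12]}$ in $\cat{sMod}_{\Z[\quot12]}^\bN$, with the property that $H_{n,d}(\gM_V) = H_d(\GL_n(R); V_n)$ and $\sigma$-multiplication is the stabilisation map. As in \autoref{sec:einfty} we then identify
\[H_{n,d}(\overline{\gM_V}/\sigma) = H_d(\GL_n(R),\GL_{n-1}(R); V_n, V_{n-1}).\]
A module-theoretic analogue of \autoref{E3} (which continues to hold since $2$ is inverted) reduces the theorem to showing the $E_1$-vanishing
\[H^{E_1}_{n,d}(\gM_V) = 0 \qquad \text{for } d < n - \max(r,m) - 1.\]
A module-theoretic analogue of \autoref{prop:e1-splitting} identifies this group with $H_{d-n+1}\bigl(\GL_n(R);\mr{St}^{E_1}(R^n) \otimes V_n\bigr)$, so the problem becomes one about homology of $\GL_n(R)$ with Steinberg modules twisted by $V_n$.

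We establish the vanishing by induction on the polynomial degree $r$. The base case $r = -1$ is immediate, since $V_n = 0$ for $n > m$. For the inductive step, we use the functorial short exact sequences
\[0 \to \ker V \to V \to V/\ker V \to 0, \qquad 0 \to V/\ker V \to \Sigma V \to \coker V \to 0,\]
which give long exact sequences in $E_1$-module homology. The inductive hypothesis applies directly to $\ker V$, which has polynomial degree $-1$ in ranks $> m$, and to $\coker V$, which has polynomial degree $\leq r-1$ in ranks $> m-1$. The remaining term $\Sigma V$ is handled by observing that $\Sigma$ corresponds to a rank-shift on $\gR$-modules combined with $\sigma$-multiplication: the cofiber sequence $\sigma \cdot \gM_V \to \gM_V \to \gM_V/\sigma$, shifted in rank, relates $\gM_{\Sigma V}$ to $\gM_V$ in a way that lets a vanishing line for $\gM_V$ propagate to $\gM_{\Sigma V}$.

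The main obstacle will be the twisted analogue of \autoref{thm:top-partial}: one must show that the coinvariants
\[\bigl(\widetilde{H}_{n-1}(B^m_n;\Z[\quot12]) \otimes V_n\bigr)_{\GL(R^{m+n},\text{ fix }R^m)}\]
vanish in the appropriate range. The explicit generators produced by \autoref{thmGen} now pair with elements of $V_n$, and the transpositions used in Cases 1 and 2 of the proof of \autoref{thm:top-partial} no longer act as the identity on the twisted factor. Exploiting the polynomial filtration of $V$, the argument should proceed by reducing to a top, ``constant-like'' layer, to which the untwisted \autoref{thm:top-partial} applies verbatim, while the lower layers are absorbed into the outer induction on $r$. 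Careful bookkeeping through this filtration is what produces the offset $\max(r,m)$ in the final stability range.
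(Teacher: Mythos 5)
The paper's actual proof is a one-liner: it invokes \cite[Theorem 4.8]{MPPpoly}, a general comparison theorem which says that constant-coefficient homological stability of a given slope implies twisted stability for polynomial $\VIC(R)$-modules at the same slope, with an offset controlled by $\max(r,m)$. This reduces \autoref{twistedStab} to the constant-coefficient statement already established in the proof of \autoref{thm:integers}, and nothing in the twisted setting needs to be rebuilt.

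Your proposal takes a genuinely different route: rather than reducing to the constant-coefficient case via a black-box theorem, you try to re-run the whole $E_\infty$/Steinberg-coinvariants pipeline with a twisting by $V$. This is a reasonable thing to attempt in principle, but as written it is a plan rather than a proof, and the plan has concrete gaps. The ``module-theoretic analogues'' of \autoref{E3} and \autoref{prop:e1-splitting} are asserted but not constructed; it is not explained what $\gM_V$ is as an $\gR_{\Z[\quot12]}$-module, what its $E_1$-module homology means, or why the Dyer--Lashof/Browder slope argument from \autoref{E3} carries over to the module setting with the stated offset. More seriously, you correctly identify the crux of the problem --- a twisted analogue of \autoref{thm:top-partial}, showing that $\bigl(\widetilde{H}_{n-1}(B^m_n;\Z[\quot12]) \otimes V_n\bigr)_{\GL(R^{m+n},\,\text{fix }R^m)}$ vanishes --- and you even note why the untwisted argument breaks (the transpositions $g$ in Cases 1 and 2 do not fix elements of $V_n$, so the sign trick no longer forces a generator to be $2$-torsion). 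But you then resolve this only by gesture: ``the argument should proceed by reducing to a top, constant-like layer, to which the untwisted \autoref{thm:top-partial} applies verbatim,'' with ``careful bookkeeping'' producing the offset. There is no reason given why the polynomial filtration via $\ker$ and $\coker$ has a top layer on which the chosen $g$ acts trivially, nor is the interaction between the outer induction on $r$ and the filtration quotients worked out. The induction through the shift functor $\Sigma$ is likewise asserted without a precise comparison between $\gM_{\Sigma V}$ and $\gM_V$. As it stands, the central step is a conjecture, not an argument; you should either carry out the twisted coinvariant computation in detail or, more efficiently, follow the paper and reduce to the untwisted case via a comparison theorem of the type proved in \cite[Theorem 4.8]{MPPpoly}, which handles the polynomial induction at the level of group homology and spectral sequences rather than at the level of Steinberg coinvariants.
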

\begin{proof}
By \cite[Theorem 4.8]{MPPpoly}, it suffices to prove $H_d(\GL_n(R),\GL_{n-1}(R);\Z[\quot12])=0$ for $d<n$. This was done in the proof of \autoref{thm:integers}.
\end{proof}

Similarly, there are versions of \autoref{thm:Euclidean} and \autoref{thm:integersf2} with twisted coefficients.

\begin{example}\autoref{twistedStab} can be applied to homotopy automorphisms of wedges of spheres. Given a space $X$, let $\hAut(X)$ denote its homotopy equivalences, a group-like topological monoid under composition. The proof of \cite[Theorem 1.6]{MPPpoly} with \autoref{twistedStab} in place of \cite[Theorem 4.10]{MPPpoly} implies that for $k \geq 3$, extension-by-the-identity induces a surjection \[H_d(B\hAut(\vee_{n-1} S^k);\Z[\quot12]) \lra H_d(B\hAut(\vee_n S^k);\Z[\quot12])\] for $d \leq n-1$ and an isomorphism for $d \leq n-2$.\end{example}

\subsection{Borel vanishing}

In this subsection, we improve the stable range of Borel vanishing \cite[Theorem 4.4]{Bor81}. Borel proved that $H^i(\GL_n(R); V) \cong 0$ if $R$ is a number ring (that is, the ring of integers in a number field $K_R$, which we assume comes with a chosen embedding $K_R \hookrightarrow \bC$), $V$ is a finite dimensional $\C[\GL_n (\C)]$-module whose coinvariants vanish, and $i$ is smaller than a function depending on $n$ and $V$. This function is linear of slope $\quot12$ in $n$ and of slope 1 in the highest weights of $V$. We will improve this to slope 1 in $n$ when $R$ is the integers, Gaussian integers, or Eisenstein integers.

Recall that a \emph{polynomial representation} $V$ of $\GL_n(\C)$ (over $\C$) is a group homomorphism $\GL_n(\C) \to \GL (V)$ that is also a polynomial map between varieties. Polynomial representations are semisimple and decompose into irreducible polynomial representations which can described up to isomorphism as follows. Let $\Std_n = \C^n$ denote the standard representation of $\GL_n(\C)$ over $\C$ and let $\Specht_\lambda$ be the Specht module of the symmetric group indexed by the partition $\lambda$. The irreducible polynomial representations of $\GL_n(\C)$ over $\C$ are indexed by partitions $\lambda$ of length at most $n$ and are constructed as
\[ \Std_n^{\otimes r} \otimes_{\C [S_r]} \Specht_\lambda,\]
where $r$ is the size of the partition. For an introduction to this topic, we refer the reader to Green \cite{Green}.

Let $R$ be a subring of $\C$. Polynomial representations can be considered in families for fixed $\lambda$ and varying $n$. Let $\Std\colon \VIC(R) \to\Mod_\C$ be the functor sending a $R$-module $M$ to $\C \otimes_R M$ and which sends a morphism $(f,C)\colon M \m N$ to $f\colon \C \otimes_R M \m \C \otimes_R N$. Define the $\VIC(R)$-module
\[ V(\lambda) = \Std^{\otimes r} \otimes_{\C[S_r]} \Specht_\lambda.\]
This construction assembles irreducible polynomial representations of $\GL_n(R)$ into $\VIC(R)$-modules. 

\begin{proposition}
\label{degofV}

$V(\lambda)$ is a polynomial $\VIC(R)$-module of degree $\leq |\lambda|$.

\end{proposition}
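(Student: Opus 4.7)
The plan is to realise $V(\lambda)$ as a $\VIC(R)$-direct summand of $\Std^{\otimes r}$ (where $r=|\lambda|$) and to show inductively that $\Std^{\otimes r}$ has polynomial degree $\leq r$. The base case $r=1$ is immediate: for any $R$-module $M$, the functor $\Std$ sends the standard inclusion $(M \hookrightarrow M\oplus R, R)$ to the split inclusion $\C \otimes_R M \hookrightarrow (\C \otimes_R M) \oplus \C$. Hence $\ker(\Std) = 0$, which is polynomial of degree $-1$ in every rank, and $\coker(\Std)$ is the constant $\VIC(R)$-module with value $\C$, whose own $\ker$ and $\coker$ vanish, so it has polynomial degree $\leq 0$. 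Thus $\Std$ has polynomial degree $\leq 1$.

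For the inductive step, I would verify the general fact that if $F$ is a polynomial $\VIC(R)$-module of degree $\leq p$, then $F \otimes \Std$ has polynomial degree $\leq p+1$. This uses the natural decomposition
\[ (F\otimes \Std)(M\oplus R) \;=\; F(M\oplus R)\otimes\bigl(\Std(M)\oplus \C\bigr),\]
together with the four-term exact sequence $0 \to \ker(F) \to F \to \Sigma F \to \coker(F) \to 0$ (with $\Sigma F(M) \coloneqq F(M\oplus R)$), in order to identify $\ker(F \otimes \Std)$ and $\coker(F \otimes \Std)$ as extensions built from $\ker(F)$, $\coker(F)$, $F$, and $\Std$. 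The inductive hypothesis and the fact that constant functors have degree $0$ then give the claimed bound. Iterating, $\Std^{\otimes r}$ has polynomial degree $\leq r$.

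To conclude, I would note that $S_r$ acts on $\Std^{\otimes r}$ by permuting tensor factors, and that this action is through $\VIC(R)$-module automorphisms, since the $\VIC$-structure is applied factorwise and therefore commutes with the permutation. Because $\C[S_r]$ is semisimple by Maschke's theorem, the functor $(-)\otimes_{\C[S_r]}\Specht_\lambda$ is exact and extracts a $\VIC(R)$-direct summand of $\Std^{\otimes r}$. Summands of polynomial $\VIC(R)$-modules are polynomial of the same degree, since both $\ker$ and $\coker$ respect direct sums, so $V(\lambda)$ has polynomial degree $\leq r = |\lambda|$. The main obstacle in practice is purely bookkeeping: making the tensor-product step of the induction rigorous requires a careful unwinding of the definition of polynomial degree and of the relationship between the ranks at which the relevant $\ker$ and $\coker$ constructions vanish.
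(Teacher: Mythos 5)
Your argument follows essentially the same route as the paper's: compute the polynomial degree of $\Std$ directly from the definition (checking $\ker(\Std)=0$ and $\coker(\Std)$ constant), propagate the bound to $\Std^{\otimes r}$, and then deduce the bound for $V(\lambda)$ using semisimplicity of $\C[S_r]$. The only (minor, interchangeable) differences are that the paper cites a tensor-product degree lemma from the literature where you propose to re-derive it by hand, and that the paper concludes via exactness of $- \otimes_{\C[S_r]} \Specht_\lambda$ where you phrase the same semisimplicity input as a direct-summand argument.
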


\begin{proof}
Note that $\ker(\Std)_n=0$ for all $n$. We have that $\coker(\Std)_n=\C$ for all $n$ with $\VIC(R)$-morphisms $\coker(\Std)_n \m \coker(\Std)_m$ the identity. Thus, $\coker(\coker(\Std))_n=0$ for all $n$. This means that $\coker(\coker(\Std))$ is polynomial of degree $ -1$, $\coker(\Std)$ is polynomial of degree $ 0$ and $\Std$ is a polynomial of degree $ 1$. Then \cite[Lemma 7.3(b)]{Pa} implies that $\Std^{\otimes r}$ has polynomial degree $\leq r$. Because $- \otimes_{\C[S_r]} \Specht_\lambda$ is an exact functor, it follows that $V(\lambda)$ has polynomial degree $|\lambda|$.
\end{proof}

The following theorem improves Borel's vanishing theorem for nontrivial polynomial representations.

\begin{theorem} \label{BV}
Let $R$ be a number ring such that $BA^m_n$ is spherical for all $n$ and $m$ (e.g.~the integers, Gaussian integers, or Eisenstein integers). Let $V(\lambda)_n$ be the irreducible polynomial representation of $\GL_n(\C)$ indexed by the partition $\lambda$. If $|\lambda|>0$, then \[ H_d(\GL_n(R); V(\lambda)_n) \cong 0 \quad \text{ if $d<  n-|\lambda|$.}\] 
\end{theorem}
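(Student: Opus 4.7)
The plan is to combine the slope-$1$ twisted homological stability theorem \autoref{twistedStab} with the classical stable vanishing part of Borel's theorem. Twisted stability will reduce the unstable homology $H_d(\GL_n(R); V(\lambda)_n)$ to the stable homology throughout the entire range $d < n - |\lambda|$, and this stable homology vanishes when $|\lambda| > 0$.

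To execute the first step, I appeal to \autoref{degofV}, which says that $V(\lambda)$ is a polynomial $\VIC(R)$-module of degree $\leq |\lambda|$; unraveling its proof shows that one may take the rank parameter $m \leq |\lambda|$ in the conventions of \autoref{DefPolyn}, so that $\max(r,m) = |\lambda|$. Plugging this into \autoref{twistedStab} yields the vanishing of relative homology
\[ H_d(\GL_n(R), \GL_{n-1}(R); V(\lambda)_n, V(\lambda)_{n-1}) = 0 \qquad \text{for } d < n - |\lambda|. \]
A routine argument with the long exact sequence of the pair then shows that the stabilization map $H_d(\GL_{n-1}(R); V(\lambda)_{n-1}) \to H_d(\GL_n(R); V(\lambda)_n)$ is an isomorphism as soon as $n \geq d + |\lambda| + 2$, and iterating identifies $H_d(\GL_n(R); V(\lambda)_n)$ with the colimit $\colim_{N \to \infty} H_d(\GL_N(R); V(\lambda)_N)$ for every $n$ satisfying $d < n - |\lambda|$.

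It remains to check that this stable homology vanishes whenever $|\lambda| > 0$. This is a stable version of Borel's vanishing theorem \cite{Bor81}: in the stable range, the rational cohomology of $\GL_n(R)$ with coefficients in a nontrivial irreducible polynomial representation $V(\lambda)$ of $\GL_n(\C)$ is computed by the continuous cohomology of the ambient Lie group, which vanishes for any nontrivial irreducible coefficient system by a Wigner-type argument (relative $(\mathfrak{g},\mathfrak{k})$-cohomology with such coefficients is zero); the corresponding vanishing in homology follows from universal coefficients. I expect the only real friction to be bookkeeping---verifying that the polynomial-degree parameters $(r,m)$ for $V(\lambda)$ really combine to give $\max(r,m) = |\lambda|$, so that the range supplied by \autoref{twistedStab} matches the range claimed in the theorem. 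Once this is settled, the proof becomes a direct combination of the new slope-$1$ stability of \autoref{twistedStab} with the classical stable vanishing.
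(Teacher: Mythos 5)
Your proposal is correct and follows essentially the same route as the paper: combine \autoref{degofV} and \autoref{twistedStab} to stabilize, then invoke Borel's stable vanishing \cite{Bor81} for $|\lambda|>0$. The paper phrases the stabilization step directly as an isomorphism $H_d(\GL_n(R);V(\lambda)_n)\cong H_d(\GL_m(R);V(\lambda)_m)$ for $d+|\lambda|<n,m$, which is what your long-exact-sequence argument unpacks; and the bookkeeping concern you flag (that $\max(r,m)=|\lambda|$) is indeed the implicit content of \autoref{degofV}'s proof, where both $\ker(\Std)$ and $\coker(\coker(\Std))$ vanish in all ranks.
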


\begin{proof}
By \autoref{degofV} and \autoref{twistedStab}, \[ H_d(\GL_n(R); V(\lambda)_n) \cong H_d(\GL_m(R); V(\lambda)_m) \quad \text{ if $d+|\lambda|<n,m$.}\] By \cite[Theorem 4.4]{Bor81}, $H_d(\GL_m(R); V(\lambda)_m) \cong 0$ for $m$ sufficiently large and $|\lambda|>0$. Thus, $H_d(\GL_n(R); V(\lambda)_n) \cong 0 $ for $d<  n-|\lambda|$.
\end{proof}

\bibliographystyle{amsalpha}
\bibliography{../refs}

\vspace{.5cm}

\end{document}